\theoremstyle{plain}
\theoremstyle{definition}
\newtheorem{fact}[theorem]{Fact}
\DeclareMathAlphabet{\mathcal}{OMS}{cmsy}{m}{n}
\newcolumntype{C}[1]{>{\centering\arraybackslash}p{#1}}
\newcolumntype{L}[1]{>{\arraybackslash}p{#1}}
\newcommand{\FO}{\ensuremath{\mathbf{FO}}\xspace}
\newcommand{\Inc}{\ensuremath{\mathsf{Inc}}\xspace}
\newcommand{\pGFP}{\ensuremath{\mathsf{posGFP}}\xspace}
\newcommand{\LL}{\ensuremath{\mathscr{L}}\xspace}
\newcommand{\cn}{\ensuremath{\mathop{\dot\sim}}\xspace}
\newcommand{\incexc}{\ensuremath{\subseteq\!\mathsf{Exc}}\xspace}
\newcommand{\incwe}{\ensuremath{\subseteq\!\mathsf{W}_\exists}\xspace}
\newcommand{\incwu}{\ensuremath{\subseteq\!\mathsf{W}_\forall}\xspace}
\newcommand{\existse}{\ensuremath{\exists\mathsf{E}}\xspace}
\newcommand{\existsi}{\ensuremath{\exists\mathsf{I}}\xspace}
\newcommand{\incc}{\ensuremath{\subseteq\!\mathsf{Cmp}}\xspace}
\newcommand{\incctr}{\ensuremath{\subseteq\!\mathsf{Ctr}}\xspace}
\newcommand{\inctr}{\ensuremath{\subseteq\!\mathsf{Trs}}\xspace}
\newcommand{\incdstr}{\ensuremath{\exists\!\subseteq\!\mathsf{Ext}}\xspace}
\newcommand{\inci}{\ensuremath{\forall\!\!\subseteq\!\mathsf{Sim}}\xspace}
\newcommand{\ince}{\ensuremath{\forall\!\!\subseteq\!\mathsf{Sim}}\xspace}
\newcommand{\eqi}{\ensuremath{=\!\mathsf{I}}\xspace}
\newcommand{\eqsub}{\ensuremath{=\!\mathsf{Sub}}\xspace}
\newcommand{\conji}{\ensuremath{\wedge\mathsf{I}}\xspace}
\newcommand{\conje}{\ensuremath{\wedge\mathsf{E}}\xspace}
\newcommand{\tensori}{\ensuremath{\vee\mathsf{I}}\xspace}
\newcommand{\tensore}{\ensuremath{\vee\mathsf{E}}\xspace}
\newcommand{\nege}{\ensuremath{\neg\mathsf{E}}\xspace}
\newcommand{\negi}{\ensuremath{\neg\mathsf{I}}\xspace}
\newcommand{\uqs}{\ensuremath{\forall\mathsf{Sub}}\xspace}
\newcommand{\uqi}{\ensuremath{\forall\mathsf{I}}\xspace}
\newcommand{\uqe}{\ensuremath{\forall\mathsf{E}}\xspace}
\newcommand{\uqextd}{\ensuremath{\forall_\vee\mathsf{Ext}}\xspace}
\newcommand{\uqextc}{\ensuremath{\forall_\wedge\mathsf{Ext}}\xspace}
\newcommand{\uqez}{\ensuremath{\forall\mathsf{E}_0}\xspace}
\newcommand{\uqexc}{\ensuremath{\forall\mathsf{Exc}}\xspace}
\begin{document}
%%    The information for the title page will be placed between
%%    \begin{document} and \maketitle. The order of most entries
%%    is determined by the class file and can not be changed by
%%    rearranging them. The maketitle command follows after the
%%    abstract.
%%
%%    Most of the following commands will be completed by the publisher.
%%
%%    The copyrightyear is defined in the .clo file as the first argument
%%    of the copyrightinfo command. If the copyrightyear differs from that
%%    value it might be adjusted by the following definition:
%%
%% \renewcommand{\copyrightyear}{2007}% uncomment to change the copyrightyear.
%%
\DOIsuffix{theDOIsuffix}
%%
%% issueinfo for the header line
\Volume{53}
\Issue{0}
\Month{01}
\Year{2007}
%%
%%    First and last pagenumber of the article. If the option
%%    'autolastpage' is set (default) the second argument may be left empty.
\pagespan{1}{}
%%
%%    Dates will be filled in by the publisher. The 'reviseddate' and
%%    'dateposted' (Published online) entry may be left empty.
\Receiveddate{}
\Reviseddate{}
\Accepteddate{}
\Dateposted{}
\keywords{inclusion logic, team semantics, dependence logic}
\subjclass[msc2010]{03B60}

%% \pretitle{Editor's Choice}

%% We have a short and a long form for the title. The short form
%% (optional argument) goes into the running head.

\title[Axiomatizing first-order consequences in inclusion logic]{Axiomatizing first-order consequences in inclusion logic}

%% Please do not enter footnotes or \inst{}-notes into the optional
%% argument of the author command. The optional argument will go into
%% the header.  If there is only one address the marker \inst{x} may be
%% omitted.

%% Information for the first author.
\author[F. Yang]{Fan Yang%\inst{1,}%
  \footnote{This research was supported by grant 308712 of the Academy of Finland, and also by Research Funds of the University of Helsinki.
}}
\address{PL 68 (Pietari Kalmin katu 5), 00014 University of Helsinki, Finland}
%%
%%    Information for the second author
%\author[S. Author]{Second Author\inst{1,2,}\footnote{Second author footnote.}}
%\address[\inst{2}]{Second address}
%%
%%    Information for the third author
%\author[T. Author]{Third Author\inst{2,}\footnote{Third author footnote.}}
%%
%%    \dedicatory{This is a dedicatory.}
\begin{abstract}
 Inclusion logic is a variant of dependence logic that was shown to have the same expressive power as positive greatest fixed-point logic. Inclusion logic is not axiomatizable in full, but its first-order consequences can be axiomatized. In this paper, we provide such an explicit partial axiomatization by introducing a system of natural deduction for inclusion logic that is sound and complete for first-order consequences in inclusion logic. 
\end{abstract}
%% maketitle must follow the abstract.
\maketitle                   % Produces the title.

%% If there is not enough space inside the running head
%% for all authors including the title you may provide
%% the leftmark in one of the following three forms:

%% \renewcommand{\leftmark}
%% {First Author: A Short Title}

%% \renewcommand{\leftmark}
%% {First Author and Second Author: A Short Title}

%% \renewcommand{\leftmark}
%% {First Author et al.: A Short Title}

%% \tableofcontents  % Produces the table of contents.

%%%%%%%%%%%%%%%%%%%%%%%%%%%%%%

\section{Introduction}

In this paper, we axiomatize first-order consequences of inclusion logic. {\em Inclusion logic} was introduced by Galliani \cite{PietroIE}. Together with {\em independence logic}, introduced by Gr\"{a}del and V\"{a}\"{a}n\"{a}nen \cite{D_Ind_GV}, inclusion logic is an important variant of {\em dependence logic}, which was  introduced by V\"{a}\"{a}n\"{a}nen \cite{Van07dl} as  an extension of first-order logic and a new framework for characterizing dependency notions.
% as a variant of {\em dependence logic}, which was introduced by V\"{a}\"{a}n\"{a}nen \cite{Van07dl}. Another important variant of dependence logic is {\em independence logic}, introduced by Gr\"{a}del and V\"{a}\"{a}n\"{a}nen \cite{D_Ind_GV}. 
%Dependence logic and its variants adopt the framework of  {\em team semantics} of Hodges \cite{Hodges1997a,Hodges1997b}. %to characterize dependency notions.
 %are extensions of first-order logic with new atoms characterizing dependency notions. These logics adopt the so-called {\em team semantics}, which was introduced by Hodges \cite{Hodges1997a,Hodges1997b}. 
%This line of research studies extensions of first-order logic that can characterize dependency notions. 
Inclusion logic aims to characterize inclusion dependencies by extending first-order logic with {\em inclusion atoms}, which are strings of the form 
%is the extension of first-order logic with inclusion atoms that characterize inclusion dependencies.
%An inclusion atom is a string of the form 
$x_1\dots x_n\subseteq y_1\dots y_n$, where $\langle x_1,\dots,x_n\rangle=\mathsf{x}$ and $\langle y_1,\dots,y_n\rangle=\mathsf{y}$ are sequences of variables of the same length. Inclusion logic adopts the  {\em team semantics} of Hodges \cite{Hodges1997a,Hodges1997b}, in which inclusion atoms and other formulas are evaluated in a model with respect to {\em sets} of assignments (called {\em teams}), in contrast to single assignments as in the usual first-order logic. Intuitively the inclusion atom $\mathsf{x}\subseteq\mathsf{y}$ specifies that all possible values for $\mathsf{x}$ in a team $X$ are included in the values of $\mathsf{y}$ in the same team $X$.

Galliani and Hella proved that inclusion logic is expressively equivalent to positive greatest fixed-point logic \cite{inclusion_logic_GH}. It then follows from the results of Immerman \cite{Immerman_86} and Vardi \cite{Vardi_82} that over finite ordered structures inclusion logic captures \textsf{PTIME}. Building on these results, Gr\"{a}del defined model-checking games for inclusion logic \cite{Gradel16}, which then found applications in \cite{GradelHegselmann16}.
%and applications of these games were developed in \cite{GradelHegselmann16}. 
There also emerged some studies \cite{HannulaKontinen15,Hannula_inc15,Ronnholm_thsis,HannulaHella19} on the computational complexity and syntactical fragments of inclusion logic. Embedding the semantics of inclusion atoms into the semantics of the quantifiers, R\"{o}nnholm \cite{Ronnholm18} introduced the interesting inclusion quantifiers that generalize the idea of the slashed quantifiers of {\em independence-friendly logic} \cite{Hintikka98book} (a close relative to dependence logic).
% in the same spirit of the slashed quantifiers of {\em independence-friendly logic} \cite{Hintikka98book}.
%R\"{o}nnholm \cite{Ronnholm18} has introduced the so-called inclusion quantifiers that naturally combines inclusion atoms with quantifiers, and showed that.
 Inclusion atoms have also found natural applications  in a recent formalization of Arrow's Theorem in social choice in dependence and independence logic \cite{PacuitYang2016}. Motivated by the increasing interest in inclusion logic, we present in this paper a proof-theoretic investigation of inclusion logic, which is currently missing in the literature.
%these increasing interests in inclusion logic call for a better understanding of inclusion logic from the proof-theoretic point of view, which is yet missing in the literature.

It is worth noting that inclusion atoms correspond exactly to the {\em inclusion dependencies} studied in database theory. The {\em implication problem} of inclusion dependencies, i.e., the problem of deciding whether $\Gamma\models\phi$ for a set $\Gamma\cup\{\phi\}$ of inclusion dependencies (or inclusion atoms), is completely axiomatized in \cite{CasanovaFaginPapadimitriou84} by the following three rules/axioms:
\begin{itemize}
\item $\mathsf{x}\subseteq\mathsf{x}$ (identity)
\item $x_1\dots x_n\subseteq y_1\dots y_n/x_{i_1}\dots x_{i_k}\subseteq y_{i_1}\dots y_{i_k}$ for $i_1,\dots,i_k\in\{1,\dots,n\}$ (projection and permutation)
\item $\mathsf{x}\subseteq\mathsf{y},\mathsf{y}\subseteq\mathsf{z}/\mathsf{x}\subseteq\mathsf{z}$ (transitivity)
\end{itemize}
The team semantics interpretation for inclusion atoms has recently been ulitized to study  the implication problems of inclusion atoms together with other dependency atoms \cite{HannulaKontinen14,HannulaKontinenLink17,HannulaLink18}. 
In this paper, we study, instead, the axiomatization problem of inclusion logic, i.e., inclusion atoms  enriched with connectives and quantifiers of first-order logic. We investigate the problem of finding a deduction system for which the completeness theorem
\begin{equation}\label{cmp_st_eq}
\Gamma\models\phi\iff\Gamma\vdash\phi
\end{equation}
holds for $\Gamma\cup\{\phi\}$ being a set of formulas of the logic. 

It is known that dependence logic is not (effectively) axiomatizable, since the sentences of the logic are equi-expressive with sentences of existential second-order logic (\textsf{ESO}) \cite{Van07dl}. Nevertheless, if one restrict the consequence $\phi$ in (\ref{cmp_st_eq}) to a first-order sentence and $\Gamma$ to a set of sentences in dependence logic, the axiomatization can be found. This is because, finding a model for such a set $\Gamma\cup\{\neg\phi\}$ of sentences of dependence logic is the same as finding a model for a set of \textsf{ESO} sentences (i.e., sentences of the form $\exists f_1\dots f_n\alpha$ for some first-order $\alpha$), which is then reduced to finding a model for a set of first-order sentences (of the form $\alpha$). A concrete system of natural deduction for dependence logic admitting this type of completeness theorem was given in \cite{Axiom_fo_d_KV}. The proof of the completeness theorem uses a nontrivial technique based on the equivalence between a dependence logic sentence and its so-called game expression (an infinitary first-order sentence describing a semantic game) over countable models, and the fact that the game expression can be finitely approximated over recursively saturated  models.
  Subsequently, using the similar method a system of natural deduction axiomatizing completely the first-order consequences in independence logic  with respect to sentences was also introduced \cite{Hannula_fo_ind_13}. These partial axiomatizations for sentences were first generalized in \cite{Kontinen15foc} to cover the cases for formulas by expanding the language with a new predicate symbol to interpret the teams, and later generalized further in \cite{Yang_neg18} to cover  the case when the consequence $\phi$ in (\ref{cmp_st_eq}) is not necessarily first-order itself but has an essentially first-order translation by applying a trick that involves the weak classical negation \cn and the addition of the $\mathsf{RAA}$ rule for \cn. 

As we will demonstrate formally in this paper, inclusion logic is not (effectively) axiomatizable either. Since inclusion logic is less expressive than \textsf{ESO}, by the same argument as  above, the first-order consequences of inclusion logic can also be axiomatized. In this paper, we give explicitly such an axiomatization. To be more precise, we introduce a system of natural deduction for inclusion logic for which the completeness theorem (\ref{cmp_st_eq}) holds for $\phi$ being a first-order formula and $\Gamma$ being a set of \Inc-formulas. Our completeness proof uses the technique developed in \cite{Axiom_fo_d_KV} together with the trick in \cite{Yang_neg18}. %As with the systems of dependence and independence logic defined in \cite{Axiom_fo_d_KV,Hannula_fo_ind_13}, 
Our system of inclusion logic is a conservative extension of the system of first-order logic, in the sense that it has the same rules as that of first-order logic when restricted to first-order formulas only. The rules for inclusion atoms  include some of those introduced in \cite{Hannula_fo_ind_13}, and the rules characterizing the interactions between inclusion atoms and the connectives and quantifiers appear to be simpler than the corresponding ones in the systems of dependence and independence logic defined in \cite{Axiom_fo_d_KV,Hannula_fo_ind_13}. The $\mathsf{RAA}$ rule for \cn, being a crucial (yet generally not effective) rule for applying the trick of \cite{Yang_neg18}, also behaves better in our system of inclusion logic than in the systems of dependence and independence logic. In particular, in the inclusion logic system, with respect to  first-order formulas, the $\mathsf{RAA}$ rule for \cn  becomes effective and also derivable from other more basic rules.

The paper is organized as follows. In Section 2 we recall the basics of inclusion logic, and also give a  proof that inclusion logic is not (effectively) axiomatizable. Section 3 discusses the normal form for inclusion logic. In Section 4, we define the game expressions and their finite approximations that are crucial for the proof of the completeness theorem of the  system of natural deduction for inclusion logic. We introduce this system in Section 5, and also prove the soundness theorem as well as some useful derivable clauses in the section. The  proof of the completeness theorem will be given in Section 6. We conclude in Section 7 by showing some applications of our system; in particular, we derive in our system the axioms for anonymity atoms proposed recently by V\"{a}\"{a}n\"{a}nen \cite{Vaananen_anonymity19}.

\section{Preliminaries}\label{sec:pre} % 

In this section, we recall the basics of inclusion logic and prove formally that  inclusion logic is not (effectively) axiomatizable.
We consider  first-order signatures \LL with a built-in equality symbol $=$. Fix a set $\mathsf{Var}$ of first-order variables, and denote its elements by $u,v,w,x,y,\dots$ (with or without subscripts). First-order \LL-terms $t$ are built recursively as usual. First-order \LL-formulas $\alpha$ are defined by the grammar:%(with connectives $\neg,\wedge,\vee$ and quantifiers $\exists,\forall$)  %In particular, an equality $t_1=t_2$ is a first-order formula. 
\[\alpha::=\bot\mid t_1=t_2\mid Rt_1,\dots,t_n\mid\neg\alpha\mid \alpha\wedge\alpha\mid\alpha\vee\alpha\mid\exists x\alpha\mid\forall x\alpha.\]
Throughout the paper, we reserve the first greek letters $\alpha,\beta,\gamma,\delta$ (with or without subscripts) for first-order formulas. As usual, we write $\alpha\to\beta:=\neg\alpha\vee\beta$ and $\alpha\leftrightarrow\beta:=(\alpha\to\beta)\wedge(\beta\to\alpha)$ for first-order formulas $\alpha$ and $\beta$. Formulas $\phi$ of inclusion logic (\Inc) are defined recursively as follows:
\[
\phi::=\bot\mid\alpha\mid \neg\alpha\mid   x_1\dots x_n\subseteq y_1\dots y_n\mid  \phi\wedge\phi\mid\phi\vee\phi\mid \exists x \phi\mid \forall x\phi
\]
where $\alpha$ is an arbitrary first-order formula.
The formula $x_1\cdots x_n\subseteq y_1\dots y_n$ is called an {\em inclusion atom}. Note that in the literature on inclusion logic, \Inc-formulas are usually assumed to be in negation normal form (i.e., negation occurs only in front of atomic formulas). We do not adopt this convention in this paper, but we do require that negation in \Inc applies only to first-order formulas. 
%where $\alpha,\alpha_1,\dots,\alpha_k,\beta_1,\dots,\beta_k$ are arbitrary first-order formulas, and $t_0,t_1,\dots,t_k,t_1',$\\$\dots,t_k'$ are arbitrary first-order terms. Both  $t_1,\dots,t_k\subseteq t_1',\dots,t_k'$ and $\alpha_1,\dots,\alpha_k\subseteq \beta_1,\dots,\beta_k$ are called {\em inclusion atoms}. Note that the syntax of our inclusion logic as defined above is richer than the usual one considered in the literature (e.g., \cite{inclusion_logic_GH}), especially we allow inclusion atoms to have first-order formulas as arguments. We shall see later that such defined inclusion logic has the same expressive power as the usual one. \todo{Is it? No dependence atom}

The set $\textsf{Fv}(\phi)$ of free variables of an \Inc-formula $\phi$ is defined inductively as usual except that we now have the new  case
\[\textsf{Fv}(x_1\cdots x_n\subseteq y_1\cdots y_n):=\{x_1,\dots,x_n,y_1,\dots,y_n\}.\]
%We write $\phi(x_1,\dots,x_n)$ to indicate that the free variables of $\phi$ are among $x_1,\dots,x_n$. 
We write $\phi(x_1,\dots,x_k)$ to indicate that the free variables of $\phi$ are among $ x_1,\dots,x_k$. \Inc-formulas with no free variable are called {\em sentences}. We write $\phi(t/x)$ for the formula obtained by substituting uniformly $t$ for $x$  in $\phi$, where we assume that $t$ is free for $x$.

We assume that the domain of a first-order model $M$ has at least two elements, and use the same letter $M$ to stand for both the model and its domain. 
An assignment  of an  \LL-model $M$ for a set $V \subseteq \mathsf{Var}$ of variables is a function $s:V\to M$. 
The interpretation of an \LL-term $t$ under $M$ and $s$ (denoted  $s(t^M)$) is defined as usual.
For any sequence $\mathsf{x}=\langle x_1,\dots,x_k\rangle$ of variables, we write $s(x_1,\dots,x_k)$ or $s(\mathsf{x})$ for $\langle s(x_1),\dots,s(x_k)\rangle$. For any element $a\in M$, $s(a/x)$ is the assignment defined as
\[s(a/x)(y)=\begin{cases}
a,&\text{if }y=x;\\
s(y),&\text{otherwise.}
\end{cases}
\]
A set $X$ of assignments of a model $M$ with the same domain $\mathsf{dom}(X)$ is called a {\em team} (of $M$). In particular, the empty set $\emptyset$ is a team, and the singleton $\{\emptyset\}$ is a team with the empty domain. %Given an \Inc-formula $\phi$, we say that a model $M$ is a {\em suitable} model and a team $X$ is a {\em suitable} team with respect to $\phi$ if all constant, relation and function symbols occurring in $\phi$ are in the signature of $M$, and all free variables of $\phi$ are in the domain of $X$.

\begin{defn}
 For any \LL-formula  $\phi$ of \Inc, any \LL-model $M$ and any team $X$ of $M$ with $\mathsf{dom}(X)\supseteq \mathsf{Fv}(\phi)$, we define the satisfaction relation $M\models_X\phi$ inductively as follows:  
\begin{itemize}
\item $M\models_X \bot$  \ iff \  $X=\emptyset$.
%\item $M\models_X t_1=t_2$ iff for all $s\in X$, $s(t_1^M)=s(t_2^M)$.
\item $M\models_X \alpha$    \ iff \   for all $s\in X$, $M\models_s\alpha$ in the usual sense.
\item $M\models_X \neg \alpha$    \ iff \   for all $s\in X$, $M\not\models_s\alpha$ in the usual sense.
\item $M\models_X \mathsf{x}\subseteq \mathsf{y}$   \ iff \   for all $s\in X$, there is $s'\in X$ such that $s(\mathsf{x})=s'(\mathsf{y})$.
%\[s(x_1,\dots,x_k)=s'(y_1,\dots,y_k).\] 
%\item $M\models_X x_1\dots x_k\subseteq^{\alpha(z)} y_1\dots y_k$ iff for all $s\in X$, there is $s'\in X$ such that %$s(\bar{w})=s'(\bar{u})$;
%\[M\models_s\alpha(x_1)\Leftrightarrow M\models_{s'}\alpha(y_1),\quad\dots,\quad M\models_s\alpha(x_k)\Leftrightarrow M\models_{s'}\alpha(y_k);\]
  \item $M\models_X\phi\wedge\psi$   \ iff \   $M\models_X\phi$ and $M\models_X\psi$.
   \item $M\models_X\phi\vee\psi$   \ iff \   there exist $Y,Z\subseteq X$ with $X=Y\cup Z$ such that %\index{tensor disjunction (\sor)}\index{tensor (\sor)}
  $M\models_{Y}\phi$ and $M\models_{Z}\psi$.
      \item $M\models_X\exists x\phi$   \ iff \   $M\models_{X(F/x)}\phi$ for some function $F:X\to \wp(M)\setminus\{\emptyset\}$, where 
    \[X(F/x)=\{s(a/x)\mid s\in X\text{ and }a\in F(s)\}.\]
  \item $M\models_X\forall x\phi$   \ iff \   $M\models_{X(M/x)}\phi$, where
  \(X(M/x)=\{s(a/x)\mid s\in X\text{ and }a\in M\}.\)
\end{itemize}
For any set $\Gamma$ of \Inc-formulas, we write $M\models_X\Gamma$ if $M\models_X\phi$ for all $\phi\in\Gamma$. We write $\Gamma\models\phi$ if  $M\models_X\Gamma$ implies $M\models_X\phi$ for all  models $M$ and teams $X$. We write simply $\models\phi$ for $\emptyset\models\phi$, and $\psi\models\phi$ for $\{\psi\}\models\phi$. If both $\phi\models\psi$ and $\psi\models\phi$, we wire $\phi\equiv\psi$.
\end{defn}

Our version of the team semantics for disjunction  and existential quantifier is known in the literature as {\em lax semantics}; see \cite{PietroIE} for further discussion.
In some literature  (e.g., \cite{PietroIE})  inclusion atoms are allowed to have arbitrary terms %or first-order formulas 
as arguments, namely  strings of the form $t_1\dots t_n\subseteq t_1'\dots t_n'$ 
%or $\alpha_1\dots\alpha_k\subseteq \beta_1\dots\beta_k$ 
are considered well-formed formulas, and the semantics of these  inclusion atoms are defined (naturally) as:
\begin{itemize}
\item $M\models_X t_1\cdots t_n\subseteq t_1'\cdots t_n'$ ~iff~ for all $s\in X$, there is $s'\in X$ such that %$s(\bar{w})=s'(\bar{u})$;
\(s(t_1^M,\dots,t_n^M)=s'((t_1')^M,\dots,(t_n')^M).\)
%\item $M\models_X \alpha_1\dots \alpha_k\subseteq \beta_1\dots \beta_k$ iff for all $s\in X$, there is $s'\in X$ such that %$s(\bar{w})=s'(\bar{u})$;
%\[M\models_s\alpha_1\Leftrightarrow M\models_{s'}\beta_1,\quad\dots,\quad M\models_s\alpha_k\Leftrightarrow M\models_{s'}\beta_k.\]
\end{itemize}
It is easy to verify that inclusion atoms of this type are definable in our version of inclusion logic, since $\mathsf{t}\subseteq\mathsf{t}'\equiv\exists \mathsf{x}\mathsf{y}(\mathsf{x}=\mathsf{t}\wedge \mathsf{y}=\mathsf{t}'\wedge \mathsf{x}\subseteq\mathsf{y})$, where $\exists \mathsf{v}$ abbreviates $\exists v_1\dots\exists v_k$ for some $k$, and $\mathsf{u}=\mathsf{v}$ is short for $\bigwedge_{i}u_i=v_i$.
%\begin{itemize}
%\item
% \[\displaystyle t_1\cdots t_k\subseteq t_1'\cdots t_k'\equiv \exists x_1\dots x_k\exists y_1\dots y_k(x_1\dots x_k\subseteq y_1\cdots y_k\wedge \bigwedge_{i=1}^k (x_i=t_i\wedge y_i=t_i')).\]
%\item $\displaystyle\alpha_1\cdots \alpha_k\subseteq \beta_1\cdots \beta_k\equiv \exists x_1\dots x_k\exists y_1\dots y_k\Big(x_1\dots x_k\subseteq y_1\cdots y_k\wedge$
%
%~~~~~~~~$\displaystyle\bigwedge_{i=1}^k\big( (x_i=\mathbf{1}\vee x_i=\mathbf{0})\wedge (\alpha_i\leftrightarrow x_i=\mathbf{1})\wedge(y_i=\mathbf{1}\vee y_i=\mathbf{0})\wedge (\beta_i\leftrightarrow y_i=\mathbf{1})\big)\Big)$.\footnotemark
%\end{itemize}
%\footnotetext{Here we are assuming that the signature contains two constant symbols $\mathbf{0}$ and $\mathbf{1}$ that are interpreted as different elements in the model.}

For any assignment $s$ and any set $V\subseteq\mathsf{Var}$ of variables, we write $s\upharpoonright V$ for the assignment $s$ restricted to $V$. For any team $X$, define $X\upharpoonright V=\{s\upharpoonright V\mid s\in X\}$. We list the most important properties of \Inc-formulas in the following lemma. The reader is referred to \cite{PietroIE,inclusion_logic_GH} for other properties.

\begin{lemma}
Let $\phi$ be an \LL-formula, $M$ an \LL-model, and $X$, $Y$, $X_i$ ($i\in I$) arbitrary teams of $M$ with $\mathsf{dom}(X),\mathsf{dom}(Y),\mathsf{dom}(X_i)\supseteq \mathsf{Fv}(\phi)$.
\begin{description}
\item[Locality] If $X\upharpoonright \mathsf{Fv}(\phi)=Y\upharpoonright \mathsf{Fv}(\phi)$, then $M\models_X\phi\iff M\models_Y\phi$. %In particular, if $\theta$ is a sentence, then $M\models_{\{\emptyset\}}\theta\iff M\models_X\theta$ for all teams $X$. We  write $M\models\theta$ in case $M\models_{\{\emptyset\}}\theta$.
\item[Union Closure] If $M\models_{X_i}\phi$ for all $i\in I$, then $M\models_{\bigcup_{i\in I}X_i}\phi$.
\item[Flatness of First-order Formulas] For any first-order \LL-formula $\alpha$, 
\[M\models_X\alpha\iff M\models_{\{s\}}\alpha\text{ for all }s\in X.\]
Consequently, first-order formulas are also {\em downwards closed}, that is,
$M\models_X\alpha$ and $Y\subseteq X$ imply $M\models_Y\alpha$. 
\end{description}
\end{lemma}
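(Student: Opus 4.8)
The lemma collects three standard properties of inclusion-logic formulas, so the proof will proceed by simultaneous induction on the structure of $\phi$ for each property, and I should treat each property in turn.

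The plan is to prove \textbf{Locality} first, by induction on $\phi$, since the other two properties can lean on it. The atomic cases ($\bot$, first-order $\alpha$, $\neg\alpha$, and the inclusion atom $\mathsf{x}\subseteq\mathsf{y}$) are immediate: the satisfaction clause for each refers only to the values $s(\mathsf{x})$ and $s(\mathsf{y})$ of assignments on variables appearing in the formula, so agreement on $\textsf{Fv}(\phi)$ forces equality of the truth conditions. For the connective and quantifier cases the key point is bookkeeping about free variables: for $\phi\vee\psi$ one uses that $\textsf{Fv}(\phi),\textsf{Fv}(\psi)\subseteq\textsf{Fv}(\phi\vee\psi)$ and splits a witnessing decomposition $X=Y\cup Z$ into a matching one for $Y$; for $\exists x\phi$ and $\forall x\phi$ one checks that if $X\rstr\textsf{Fv}(\exists x\phi)=Y\rstr\textsf{Fv}(\exists x\phi)$ then the extended teams $X(F/x)$ and $Y(F/x)$ (resp. $X(M/x)$ and $Y(M/x)$) agree on $\textsf{Fv}(\phi)\subseteq\textsf{Fv}(\exists x\phi)\cup\{x\}$, after transporting the witnessing function $F$ along the restriction. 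I expect the free-variable bookkeeping in the quantifier cases to be the one genuinely fiddly point, since $x$ may or may not lie in $\textsf{Fv}(\phi)$.

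For \textbf{Union Closure} I will again induct on $\phi$. The atomic and first-order cases are direct from the semantics (for $\mathsf{x}\subseteq\mathsf{y}$, a witness $s'$ for $s$ lives in whichever $X_i$ contains $s$, hence in the union; for $\neg\alpha$, failing $\alpha$ at every point of each $X_i$ yields failure at every point of the union). The conjunction case is immediate. The interesting cases are disjunction and the quantifiers, where I combine the witnessing data across the index set: for $\phi\vee\psi$, given decompositions $X_i=Y_i\cup Z_i$, I take $Y=\bigcup_i Y_i$ and $Z=\bigcup_i Z_i$ and apply the induction hypothesis; for $\exists x\phi$, I merge the witnessing functions $F_i:X_i\to\wp(M)\setminus\{\emptyset\}$ into a single $F$ on $\bigcup_i X_i$ (defining $F(s)$ as the union of $F_i(s)$ over those $i$ with $s\in X_i$, which is nonempty) and check that $\bigcup_i X_i(F_i/x)=\big(\bigcup_i X_i\big)(F/x)$; for $\forall x$ it is even simpler since the supplementing team is determined. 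This is exactly where the \emph{lax} reading of $\vee$ and $\exists$ is essential, so I will flag that the argument uses lax semantics.

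Finally, \textbf{Flatness of first-order formulas} I prove by induction on the first-order $\alpha$, with the nontrivial observation being that the team-semantic clauses for $\wedge$, $\vee$, $\exists$, $\forall$ collapse to the pointwise first-order clauses when restricted to first-order formulas; for $\vee$ this uses that a first-order formula satisfied on a subteam is satisfied at each of its points, and conversely one recovers a decomposition from the pointwise witnesses. The stated consequence, downward closure, then follows immediately: if $M\models_X\alpha$ then $M\models_{\{s\}}\alpha$ for all $s\in X$, hence for all $s\in Y$ whenever $Y\subseteq X$, and flatness reassembles this into $M\models_Y\alpha$. The only real obstacle across the whole lemma is the variable-capture and witness-transport bookkeeping in the quantifier cases of Locality and Union Closure; everything else is routine unwinding of the definitions.
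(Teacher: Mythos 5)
Your proof is correct, and it is essentially the canonical argument: the paper itself states this lemma \emph{without proof}, referring the reader to \cite{PietroIE,inclusion_logic_GH}, where it is established by exactly the structural induction you outline, with the lax readings of $\vee$ and $\exists$ doing the work precisely where you flag them (note that laxness is in fact needed for Locality too, not only Union Closure, since under strict semantics locality in this form fails). The one nuance worth adding is that in this paper's formulation the satisfaction clause for a first-order $\alpha$ is already given pointwise as a primitive clause of the semantics, so your flatness induction is not proving a property of atoms but reconciling that pointwise clause with the compound-formula clauses---i.e., showing that the two possible ways of evaluating a first-order compound (as an ``atom'' $\alpha$ or via the team clauses for $\wedge,\vee,\exists,\forall$) agree, which is exactly the collapse you describe.
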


%\begin{lemma}
%First-order formulas are flat.
%\end{lemma}
If $\theta$ is a sentence, the locality property implies that $M\models_{\{\emptyset\}}\theta$ iff $M\models_X\theta$ for all teams $X$ of $M$. 
We call $M$ a {\em model} of $\theta$, written $M\models\theta$, if $M\models_{\{\emptyset\}}\theta$.

By the result of \cite{PietroIE}, \Inc sentences can be {\em translated into} existential second-order logic (\textsf{ESO}), namely, for every \Inc-sentence $\theta$, there exists a \textsf{ESO}-sentence $\tau(\theta)$ such that $M\models\theta$ iff $M\models\tau(\theta)$. Since \textsf{ESO} is well-known to be compact, it follows that \Inc is {\em compact} as well, that is, if every finite subset of a set $\Gamma$ of \Inc-sentences has a model, then the set $\Gamma$ itself has a model.
It was further proved in \cite{inclusion_logic_GH} that \Inc is expressively equivalent to positive greatest fixed point logic (\pGFP) in the sense of the following theorem.

%As a consequence of the flatness of first-order formulas, we have that for any set $\Gamma\cup\{\alpha\}$ of first-order formulas, 
%\begin{align}\label{flat_sem_equi_fo}
%\Gamma\models\alpha &\iff \text{ for }M\text{ and }s:~M\models_s\gamma\text{ for all }\gamma\in \Gamma \text{ implies }M\models_s\alpha
%\end{align}

%\Inc has the same expressive power as

\begin{theorem}[\cite{inclusion_logic_GH}]
For any \LL-formula $\phi$ of \Inc with $\mathsf{Fv}(\phi)=\{x_1,\dots,x_n\}$, there exists an  $\LL(R)$-formula $\psi(R)$ of \pGFP with a fresh $n$-ary relation symbol $R$ such that for all \LL-models $M$ and  teams $X$ of $M$ with $\mathsf{dom}(X)=\{x_1,\dots,x_n\}$,
\[M \models_X \phi\iff (M,rel(X)) \models_s \psi(R)\text{ for all }s \in X;\]
and vice versa, where $rel(X)=\{(s(x_1),\dots,s(x_n))\mid s\in X\}$ is an $n$-ary relation on $M$ that serves as the interpretation for $R$.
In particular,  \Inc-sentences can be translated into \pGFP and vice versa.

As a consequence of \cite{Immerman_86}, over finite models, \Inc and least fixed point logic have the same expressive power. In particular, by \cite{Immerman_86,Vardi_82}, over ordered finite models, \Inc captures $\mathsf{PTIME}$.
\end{theorem}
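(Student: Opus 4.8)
The plan is to establish the two halves of the equivalence separately, each by induction on syntactic complexity, with the Union Closure property serving as the bridge that makes ``greatest fixed point'' on the \pGFP side correspond to ``there exists a splitting/choice'' on the team-semantic side.

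For the direction from \Inc to \pGFP, I would induct on $\phi$, building $\psi(R)$ so that $R$ plays the role of the relation $rel(X)$ coding the team. The literal cases are immediate: by Flatness a first-order literal translates to itself (the relation $R$ is irrelevant), and the inclusion atom $\mathsf{x}\subseteq\mathsf{y}$ becomes the plain first-order statement over $R$ asserting that the current tuple of $\mathsf{x}$-values occurs among the $\mathsf{y}$-columns of $R$, i.e.\ a formula of the shape $\exists \mathsf{z}\,(R\mathsf{z}\wedge\cdots)$. Conjunction translates to the conjunction of the two translations. The genuinely hard cases are \emph{disjunction} and the \emph{existential quantifier}, whose clauses demand a splitting $X=Y\cup Z$ or a supplementing function $F$ whose resulting subteams again satisfy the subformulas. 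The point is that, by Union Closure, there is always a \emph{largest} subteam of $X$ satisfying a given subformula, so a legal splitting exists precisely when the two maximal subteams already cover $X$; and this largest satisfying subteam is exactly a greatest fixed point of the monotone operator $T\mapsto rel(X)\cap\{a:(M,T)\models_a\psi'\}$, where $\psi'$ is the already-constructed translation of the subformula. I would therefore translate $\vee$ and $\exists$ using $[\mathrm{gfp}_{R,\mathsf{x}}\cdots]$ operators (for $\exists$, together with a covering condition expressing that the maximal team projects onto all of $X$), relying throughout on the positivity of the occurrences of $R$ to guarantee monotonicity.

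Conversely, to translate \pGFP into \Inc I would again induct, the only nontrivial clause being $[\mathrm{gfp}_{R,\mathsf{x}}\,\psi]$. The key gadget is that an inclusion atom simulates a positive recursion variable: I would replace every (positive) occurrence $R\mathsf{t}$ --- including the distinguished free $R$ that codes the team --- by the inclusion atom $\mathsf{t}\subseteq\mathsf{x}$, where $\mathsf{x}$ is the block of distinguished variables whose team-values encode the candidate relation. One then checks that a team $X$ satisfies the resulting \Inc-formula exactly when $rel(X)$ is a post-fixed point of the operator defined by $\psi$ (the witnessed tuples loop back into the team), so that, by Union Closure, the \emph{maximal} satisfying team carries precisely the greatest fixed point. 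Nested and parameterised fixed points are absorbed by the induction, introducing a fresh block of quantified variables to track each recursion.

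The main obstacle in both directions is the fixed-point bookkeeping. I expect the two delicate points to be (i) verifying that $R$ (and each auxiliary recursion variable) occurs only positively in the formulas I construct, so that the fixed points are well-defined and the operators monotone, and (ii) matching the transfinite stage-by-stage approximation of the greatest fixed point against the unfolding of the team-semantic clauses for $\vee$ and $\exists$. The entire argument rests on Union Closure: it is what forces a unique maximal witnessing subteam to exist, aligning the ``greatest'' fixed point with the existential quantification over splittings and choice functions in the team semantics.
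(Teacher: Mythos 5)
First, a remark on the comparison itself: the paper does not prove this theorem at all --- it is imported verbatim from Galliani and Hella \cite{inclusion_logic_GH} --- so your proposal can only be measured against that proof and on its own merits. Your direction from \Inc to \pGFP is essentially the argument that works (and is the one used in \cite{inclusion_logic_GH}): by Union Closure each team $X$ has a unique maximal subteam satisfying a given subformula, the relation of that subteam is the greatest fixed point of the monotone operator $T\mapsto rel(X)\cap\{\mathsf{a}:(M,T)\models_{\mathsf{a}}\psi'\}$, and $\vee$, $\exists$ (and, with a little extra care, $\forall$) are translated by $\mathrm{gfp}$-formulas defining these maximal subteams, positivity of $R$ guaranteeing monotonicity throughout. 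That half of your plan is sound.

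The converse direction, however, contains a genuine gap. Your central claim --- that after replacing each occurrence $R\mathsf{t}$ by $\mathsf{t}\subseteq\mathsf{x}$, ``a team $X$ satisfies the resulting \Inc-formula exactly when $rel(X)$ is a post-fixed point of the operator defined by $\psi$'' --- is true only in the direction you do not need (team satisfaction implies post-fixed point, by monotonicity); the converse is false. Take $M=\{1,2\}$ with binary relations $E_1=\{(1,2)\}$ and $E_2=\{(2,1)\}$, let $\psi(R,x)=\exists u\big((Ru\wedge E_1(x,u))\vee(Ru\wedge E_2(x,u))\big)$, and let $X$ consist of the two assignments $x\mapsto 1$ and $x\mapsto 2$. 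Then $rel(X)=\{1,2\}$ is a fixed point of the operator of $\psi$, indeed its greatest fixed point, so $[\mathrm{gfp}_{R,x}\,\psi](x)$ is true of every element and any correct translation must be satisfied by $X$. But $X$ does not satisfy $\exists u\big((u\subseteq x\wedge E_1(x,u))\vee(u\subseteq x\wedge E_2(x,u))\big)$: by flatness of $E_1,E_2$, any subteam satisfying the first disjunct consists of assignments with $(x,u)=(1,2)$ and any subteam satisfying the second of assignments with $(x,u)=(2,1)$; since the lax witness $F$ must be nonempty on both members of $X$, the split is forced to be exactly these two singletons, and on each singleton the atom $u\subseteq x$ fails. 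The failure is structural: splitting a team at $\vee$ shrinks precisely the columns your inclusion atoms refer to, so pointwise classical satisfaction with respect to the fixed relation $rel(X)$ does not transfer to team satisfaction. (In this example the maximal team satisfying the substituted formula is $\emptyset$, not the team of the greatest fixed point, so your subsequent Union-Closure step collapses as well.) This is exactly the difficulty the Galliani--Hella proof is engineered to overcome: the candidate relation must be coded on a \emph{fresh} block $\mathsf{y}$ of existentially quantified variables disjoint from all variables of $\psi$, the lax witness must be taken ``full'' (every assignment extended by \emph{every} tuple of the candidate relation, so that splits --- defined by conditions not mentioning $\mathsf{y}$ --- preserve the $\mathsf{y}$-columns), the block is tied to the fixed-point variables by mutual inclusions $\mathsf{x}\subseteq\mathsf{y}\wedge\mathsf{y}\subseteq\mathsf{x}$, and one proves a separate transfer lemma for such full teams. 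Your single sentence about ``a fresh block of quantified variables to track each recursion'' does not supply any of this, and for the outermost free $R$ you explicitly reuse the team's own variables $\mathsf{x}$, where fullness cannot be arranged at all. As written, this direction of your proof does not go through.
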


%Introduce natural deduction system of first-order logic with equality. Introduce the symbol $\dashv\vdash$.
%
%An example of the rules of equality: \todo{or move this example to somewhere after the full system is introduced}
%
%\begin{example}\
%\begin{itemize}
%\item $\displaystyle t_1\cdots t_k\subseteq t_1'\cdots t_k'\dashv\vdash \exists x_1y_1\cdots x_ky_k\Big(x_1\cdots x_k\subseteq y_1\cdots y_k\wedge \bigwedge_{i=1}^k(x_i=t_i\,\wedge\, y_i=t_i')\Big)$
%
%\item $\displaystyle\begin{array}[t]{rl}
%&\alpha_1\cdots \alpha_k\subseteq\beta_1\cdots \beta_k\dashv\vdash\exists x_1y_1\cdots x_ky_k\exists^1 z_0\exists^1 z_1\Big(x_1\cdots x_k\subseteq y_1\cdots y_k \wedge (z_0\neq z_1)\\
%&\displaystyle\wedge\bigwedge_{i=1}^k \big((\alpha_i\rightarrow x_i=z_1)\wedge (\neg\alpha_i\rightarrow x_i=z_0)\big)\wedge(\beta\rightarrow y=z_1)\wedge (\neg\beta\rightarrow y=z_0)\Big)\end{array}$
%
%\end{itemize}
%\end{example}

%\section{\Inc is not (effectively) axiomatizable in full}

Due to the strong expressive power, \Inc is not (effectively) axiomatizable. We now give an explicit proof of this fact by following a similar argument to that in \cite{Axiom_fo_d_KV}.\footnote{The author would like to thank Jouko V\"{a}\"{a}n\"{a}nen for suggesting this proof, and the formula used in Proposition \ref{well_founded} is taken essentially  from \cite{inclusion_logic_GH}.} 

Consider the signature $\mathscr{L}_a=(+,\times,<,0,1)$ of arithmetic. We first show that the non-well-foundedness of  $<$ is definable in \Inc.

\begin{proposition}\label{well_founded}
For any model $M$ in the signature $\mathscr{L}_a$  of arithmetic,
$M\models\exists x\exists y(y\subseteq x\wedge y<x)$ iff $<^M$ is not well-founded.
\end{proposition}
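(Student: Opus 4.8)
The plan is to read off the combinatorial content of the sentence $\theta:=\exists x\exists y(y\subseteq x\wedge y<x)$ by unwinding the team semantics from the starting team $\{\emptyset\}$ (recall that for a sentence $M\models\theta$ means $M\models_{\{\emptyset\}}\theta$), and to check that satisfaction of $\theta$ is equivalent to the existence of a nonempty subset of $M$ with no $<^M$-minimal element. Throughout I take ``$<^M$ is not well-founded'' in the set-theoretic sense, i.e.\ that some nonempty $A\subseteq M$ has no $<^M$-minimal element; this is the characterization that the unwinding produces directly, and it keeps the argument choice-free.

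First I would trace the two existential quantifiers. Applying the (lax) clause for $\exists x$ to the team $\{\emptyset\}$ supplies a nonempty set $A=F(\emptyset)\subseteq M$ and yields the team $X_1$ on $\{x\}$ whose assignments are exactly those with $s(x)\in A$; applying the clause for $\exists y$ supplies, for each $a\in A$, a nonempty set $B_a\subseteq M$ and yields
\[
X=\{\,s \mid s(x)=a,\ s(y)=b,\ a\in A,\ b\in B_a\,\}
\]
on $\{x,y\}$. Then $M\models_X y\subseteq x\wedge y<x$ splits, by the clause for $\wedge$, into two demands. The inclusion atom $M\models_X y\subseteq x$ says that every value taken by $y$ in $X$ is also taken by $x$, i.e.\ $\bigcup_{a\in A}B_a\subseteq A$; and, since $y<x$ is first-order, the flatness of first-order formulas from the Lemma gives that $M\models_X y<x$ iff $b<a$ for all $a\in A$ and all $b\in B_a$.

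Putting the two demands together, $M\models\theta$ holds exactly when there is a nonempty $A\subseteq M$ together with nonempty sets $B_a\subseteq A\cap\{c:c<a\}$ for each $a\in A$. For the direction ($\Rightarrow$), nonemptiness of each $B_a$ hands us, for every $a\in A$, some $b\in A$ with $b<a$, so $A$ has no $<^M$-minimal element and $<^M$ is not well-founded. For ($\Leftarrow$), given a nonempty $A$ with no minimal element I would set $F(\emptyset)=A$ and $B_a=\{c\in A:c<a\}$, which is nonempty by hypothesis and contained in $A$ by construction, thereby witnessing $\theta$. There is no serious obstacle here: the only point to handle with care is the lax existential clause, where the witnessing maps assign \emph{sets} rather than single elements, but this is precisely what the inclusion atom exploits, and defining $B_a$ uniformly as $\{c\in A:c<a\}$ avoids any appeal to choice.
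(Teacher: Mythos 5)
Your proof is correct, and it is a complete version of what the paper only sketches: the paper's proof consists of the assertion that $\exists x\exists y(y\subseteq x\wedge y<x)$ holds in $M$ iff $M$ contains an infinite $<^M$-descending chain, with all details left to the reader. Your unwinding of the lax team semantics is exactly the intended argument, and your computation is accurate: the two existentials produce a nonempty $A\subseteq M$ and nonempty sets $B_a$, the inclusion atom forces $\bigcup_{a\in A}B_a\subseteq A$, and flatness of the first-order conjunct forces $b<^M a$ for all $b\in B_a$. The one genuine (if small) difference is the target characterization of non-well-foundedness: the paper goes through infinite descending chains, whereas you go through nonempty sets with no $<^M$-minimal element. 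Your formulation is slightly better matched to the semantics --- the witness team directly \emph{is} a set closed under stepping down, and conversely $B_a=\{c\in A: c<^M a\}$ gives the witnessing functions uniformly --- and, as you note, it avoids the dependent-choice step needed to extract a chain from a set without a minimal element (a step the paper's chain formulation would silently use in one direction). Under the ambient set theory with choice the two characterizations coincide, and indeed the paper's subsequent application (verifying the induction axiom in models of $\gamma_{\mathbf{PA}}$ where $\phi$ fails) implicitly uses the minimal-element form, so your version plugs into the rest of the argument without any adjustment.
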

\begin{proof}
It is easy to prove that $\exists x\exists y(y\subseteq x\wedge y<x)$ holds in $M$ iff $M$ contains an infinite $<$-descending chain  $\dots<^Ma_n<^M\dots<^Ma_1<^Ma_0$. We leave the proof details to the reader.
%Suppose $\dots<a_n^M<\dots<a_1<^Ma_0$ is an infinite descending chain in $M$. Define functions $F:\{\emptyset\}\to \wp(M)\setminus\{\emptyset\}$ and $G:\{\emptyset\}(F/x)\to \wp(M)\setminus\{\emptyset\}$ as follows:
%\[F(\emptyset)=\{a_n\mid n<\omega\},\]
%\[G(\langle x,a_n\rangle)=\{a_{n+1}\}.\]
%We show that $M\models_{\{\emptyset\}(F/x)(G/y)}y\subseteq x\wedge y<x$. For any $s\in \{\emptyset\}(F/x)(G/y)$, we have $s(x)=a_n$ and $s(y)=a_{n+1}$ for some $n<\omega$. Thus $s(y)=a_{n+1}<^Ma_n=s(x)$, and there is $s'\in \{\emptyset\}(F/x)(G/y)$ such that $s'(x)=a_{n+1}=s(y)$, as required.
%
%Conversely, suppose $(M,<^M)\models\exists x\exists y(y\subseteq x\wedge y<x)$. Let $F,G$ be suitable functions such that $M\models_{\{\emptyset\}(F/x)(G/y)}y\subseteq x\wedge y<x$. Pick an $s_0\in \{\emptyset\}(F/x)(G/y)$. Since $M\models_{\{\emptyset\}(F/x)(G/y)}y\subseteq x$, there exists $s_1\in \{\emptyset\}(F/x)(G/y)$ such that $s_1(x)=s_0(y)$. Since $M\models_{\{s_1\}}y< x$, we also have 
%\[s_1(y)<s_1(x)=s_0(y).\]
%Repeat the argument infinitely we can find a set $\{s_n\mid n<\omega\}\subseteq \{\emptyset\}(F/x)(G/y)$ with the infinite descending chain
%\[\dots<s_n(y)<\dots<s_1(y)<s_0(y),\]
%as required.
\end{proof}

Now, put $\phi=\exists x\exists y(y\subseteq x\wedge y<x)$, and let $\gamma_\mathbf{PA}$ be a (first-order) sentence stating that each of the (finitely many) axioms of Peano arithmetic except for the axiom schema of induction  is true (or $\gamma_\mathbf{PA}$ is the conjunction of all axioms of Robinson arithmetic $\mathsf{Q}$). For any $\mathscr{L}_a$-sentence $\alpha$ of arithmetic, we have that
\begin{equation}\label{pa2inc}
\mathbb{N}\models\alpha~\text{ iff }~\models\alpha\vee\neg\gamma_\mathbf{PA}\vee\phi,
\end{equation}
where $\mathbb{N}$ is the standard model of Peano arithmetic. To see why, for the left to right direction, suppose that $\mathbb{N}\models\alpha$ and that $M$ is a model of $\gamma_\mathbf{PA}$ such that $M\not\models\phi$. By Proposition \ref{well_founded}, $<^M$ is well-founded. Now, $M$ is a model satisfying all axioms of Robinson arithmetic (including the axiom $\forall x(x=0\vee\exists y(y+1=x))$ and the axioms stating that $<$ is a linear ordering), and the ordering $<^M$ is a well-ordering. It is then easy to verify that $M$ also satisfies the (second-order) induction axiom.
%Peano arithmetic except for the axiom schema of induction and the ordering  $<^M$ is well-founded. 
Therefore $M$ is (isomorphic to) the standard model $\mathbb{N}$ of arithmetic, and $M\models\alpha$. Conversely, suppose $\models\alpha\vee\neg\gamma_{\mathbf{PA}}\vee\phi$. The standard model $\mathbb{N}$ of Peano arithmetic clearly satisfies  $\gamma_{\mathbf{PA}}$, and by Proposition \ref{well_founded} the model $\mathbb{N}$ falsifies $\phi$. Thus we must have that  $\mathbb{N}\models\alpha$.

The equivalence (\ref{pa2inc}) shows that truth in the standard model $\mathbb{N}$ can be reduced to logical validity in inclusion logic. This means that validity in inclusion logic is not arithmetical, and therefore inclusion logic cannot have any (effective) complete axiomatization. 

Nevertheless, there can be partial axiomatizations for the logic. The main objective of the present paper is to introduce a  system of natural deduction for \Inc that is complete for first-order consequences, in the sense that
\[\Gamma\vdash\alpha\iff\Gamma\models\alpha\]
holds whenever $\Gamma$ is a set of \Inc-formulas, and $\alpha$ is a first-order formula. Our completeness proof will mainly follow the argument of \cite{Axiom_fo_d_KV}, which roughly goes as follows: First, we show that any \Inc-sentence  is semantically equivalent to a formula $\phi$ in certain normal form. Also, in the system to be introduced every \Inc-formula implies its normal form. 
Then, we show that $\phi$ is equivalent over countable models to a first-order sentence $\Phi$  of infinite length (called its {\em game expression}). Next, we show that the game expression $\Phi$ can be approximated in a certain sense (in the sense of Theorem \ref{satu_md_approx_equi}) by some first-order sentences $\Phi^n$ ($n\in \omega$) of finite length. Finally, making essential use of these approximations $\Phi^n$ we will be able to prove the completeness theorem by certain model theoretic argument, together with a trick developed in \cite{Yang_neg18} using the weak classical negation $\cn$.

%\todo{recall the argument for the completeness theorem}

\section{Normal form}

In this section, we prove that every \Inc-formula $\phi(\mathsf{z})$ is (semantically) equivalent to a formula of the form
\(\exists\mathsf{x}\forall y(\iota(\mathsf{x},y)\wedge\alpha(\mathsf{x},\mathsf{z})),\)
where $\iota$ is a conjunction of inclusion atoms, and $\alpha$ is a first-order quantifier-free formula. This normal form is similar to the normal forms for dependence and independence logic as introduced in \cite{Van07dl,Hannula_fo_ind_13}. It is also more refined than the two  normal forms for \Inc-formulas introduced in the literature, which we recall in the following.

\begin{theorem}[\cite{Hierarchies_Ind_GHK}]\label{nf_thm_qf}
Every \Inc-formula $\phi(\mathsf{z})$  is semantically equivalent to a formula of the form
\begin{equation}\label{nf_qf}
Q^1x_1\dots Q^nx_n\theta(\mathsf{x},\mathsf{z}),
\end{equation}
where $Q^i\in \{\exists, \forall\}$ and $\theta$ is a quantifier free formula. %\todo{check negation throughout the paper!}\todo{Also empty set!}
\end{theorem}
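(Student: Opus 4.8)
The plan is to prove Theorem~\ref{nf_thm_qf} by induction on the structure of the \Inc-formula $\phi(\mathsf{z})$, producing at each stage a formula in the prenex shape~(\ref{nf_qf}). The base cases are immediate: $\bot$, a first-order literal $\alpha$ or $\neg\alpha$, and an inclusion atom $\mathsf{x}\subseteq\mathsf{y}$ are already quantifier-free, so they are themselves of the required form with the empty quantifier prefix. The real work is in the four inductive steps, where I assume $\phi\equiv Q^1x_1\cdots Q^mx_m\,\theta$ and $\psi\equiv P^1y_1\cdots P^ny_n\,\eta$ with $\theta,\eta$ quantifier-free, and I must reassemble the compound formula into prenex form.

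The quantifier cases are the easiest: $\exists x\phi$ and $\forall x\phi$ simply prepend one quantifier to an existing prefix, so nothing needs to be moved. For the connectives the strategy is standard prenexing, but one must check that the team-semantic quantifier rules support it. First I would rename bound variables so that the prefixes of $\phi$ and $\psi$ use pairwise distinct variables, none of them free in the other formula or in $\mathsf{z}$; this is justified because bound renaming preserves semantic equivalence. For $\phi\vee\psi$ and $\phi\wedge\psi$ the goal is then to pull both prefixes $Q^1x_1\cdots Q^mx_m$ and $P^1y_1\cdots P^ny_n$ out to the front, merging them into a single block $R^1z_1\cdots R^{m+n}z_{m+n}$ and leaving a quantifier-free matrix built from $\theta$ and $\eta$ by the same connective. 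The key lemmas to invoke here are the prenex-extraction equivalences
\begin{align*}
(Q x\,\chi)\circ\xi &\equiv Q x\,(\chi\circ\xi),\\
\xi\circ(Q x\,\chi) &\equiv Q x\,(\xi\circ\chi),
\end{align*}
valid for $\circ\in\{\wedge,\vee\}$ and $Q\in\{\exists,\forall\}$ whenever $x$ is not free in $\xi$.

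The step I expect to be the main obstacle is verifying precisely these prenex-extraction equivalences under the lax team semantics, because some of them fail in classical single-assignment logic and their team-semantic validity is exactly what must be checked. In particular, distributing a disjunction over a universal quantifier ($\forall x\chi\vee\xi\equiv\forall x(\chi\vee\xi)$) and distributing a conjunction over an existential quantifier ($\exists x\chi\wedge\xi\equiv\exists x(\chi\wedge\xi)$) require careful handling of the team-splitting in $\vee$ and the choice function $F\colon X\to\wp(M)\setminus\{\emptyset\}$ in $\exists$. The argument relies on the \textbf{Locality} property from the preceding lemma (so that supplementing a team with a value for a variable not free in $\xi$ does not disturb the truth of $\xi$) and, for the conjunction-over-existential and the disjunction cases, on the fact that the same witnessing split or choice function can be reused after the variable $x$ has been adjoined. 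Once each of these finitely many equivalences is established, the induction goes through routinely, and the resulting prefix together with the quantifier-free matrix $\theta$ witnesses the claimed normal form~(\ref{nf_qf}).
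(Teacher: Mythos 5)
Your overall strategy (structural induction plus prenex extraction) matches the paper's, and three of your four extraction equivalences are indeed valid in lax team semantics: $\exists x\chi\wedge\xi\equiv\exists x(\chi\wedge\xi)$, $\exists x\chi\vee\xi\equiv\exists x(\chi\vee\xi)$, and $\forall x\chi\wedge\xi\equiv\forall x(\chi\wedge\xi)$, all for $x\notin \mathsf{Fv}(\xi)$. But the fourth, $(\forall x\chi)\vee\xi\equiv\forall x(\chi\vee\xi)$, which your induction must invoke whenever a universal quantifier has to cross a disjunction, is \emph{false} in \Inc, and no "careful handling of the team-splitting" can rescue it: only the direction $(\forall x\chi)\vee\xi\models\forall x(\chi\vee\xi)$ holds. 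For the converse, the split of $X(M/x)$ witnessing $\chi\vee\xi$ may genuinely depend on the value of $x$, and since \Inc-formulas are not downward closed it cannot be converted into a split of $X$. Concretely, let $M=\{0,1\}$, let $X=\{s_0,s_1\}$ with $s_i(u)=i$ and $s_i(w)=1$, and take
\[\chi:=(w\subseteq u)\wedge\neg(u=w\wedge x=w),\qquad \xi:=(u=w).\]
Then $M\models_X\forall x(\chi\vee\xi)$: split $X(M/x)$ into $U=\{s_0(0/x),s_0(1/x),s_1(0/x)\}$ and $V=\{s_1(1/x)\}$; the assignment $s_1(0/x)$ supplies the witness for $w\subseteq u$ in $U$, every member of $U$ satisfies the negated conjunct, and $V\models u=w$. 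But $M\not\models_X(\forall x\chi)\vee\xi$: any split $X=Y\cup Z$ with $Z\models u=w$ forces $s_0\in Y$; then $Y=\{s_0\}$ makes $w\subseteq u$ fail in $Y(M/x)$ (no assignment there has $u$-value $1$), while $Y=X$ puts $s_1(1/x)$ into $Y(M/x)$, violating $\neg(u=w\wedge x=w)$. So the prenex formula your procedure outputs is strictly weaker than the input formula, and the induction breaks at exactly this step.

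This is precisely where the paper's proof does something different: instead of $\forall x(\chi\vee\xi)$ it uses the equivalence $\forall x\chi\vee\xi\equiv\exists y\exists z\forall x\big((\chi\wedge y=z)\vee(\xi\wedge y\neq z)\big)$ with $y,z$ fresh, in which the two fresh existential quantifiers encode the split of the team \emph{before} $x$ is universally quantified (this is also the invertible rule \uqextd of the deduction system). Your induction would go through once the false equivalence is replaced by this one; the prefix then grows by extra existential quantifiers, which is harmless for the statement. Two smaller points: your base case treats $\alpha$ and $\neg\alpha$ as quantifier-free "literals", but in \Inc negation applies to \emph{arbitrary} first-order $\alpha$, which may contain quantifiers; these are handled by $\neg\forall x\alpha\equiv\exists x\neg\alpha$ and $\neg\exists x\alpha\equiv\forall x\neg\alpha$ (sound because first-order formulas are flat), which is the first bullet of the paper's proof. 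And your remark that these extraction laws "fail in classical single-assignment logic" is backwards: they all hold classically; the failure is purely a team-semantics phenomenon.
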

\begin{proof}[Proof (sketch)]
%We may assume first that all first-order subformulas of $\phi$ are in negation normal form. 
The theorem  follows from the fact that %for any first-order formulas $\alpha,\beta$,
\begin{itemize}
\item $\neg\forall x\alpha\equiv\exists x\neg\alpha$ and $\neg\exists x\alpha\equiv \forall x\neg\alpha$ for any first-order formula $\alpha$,%\hfill\refstepcounter{equation}(\theequation\label{nf_thm_neg_eq})
%\item $\neg(\alpha\vee\beta)\equiv\neg\alpha\wedge\neg\beta$ and $\neg(\alpha\wedge\beta)\equiv\neg\alpha\vee\neg\beta$,
\end{itemize}
and the fact that if $x\notin \textsf{Fv}(\psi)$, then 
%\[\exists x\phi\wedge\psi\equiv\exists x(\phi\wedge\psi),\quad\exists x\phi\vee\psi\equiv\exists x(\phi\vee\psi),\]
%\[\forall x\phi\wedge\psi\equiv\forall x(\phi\wedge\psi),\quad\forall x\phi\vee\psi\equiv\exists y\exists z\forall x\big((\phi\wedge y=z)\vee(\psi\wedge y\neq z)\big),\]
\begin{itemize}
\item $\exists x\phi\wedge\psi\equiv\exists x(\phi\wedge\psi)$,
\item $\exists x\phi\vee\psi\equiv\exists x(\phi\vee\psi)$,\hfill\refstepcounter{equation}(\theequation\label{nf_thm_qf_eq1})
\item $\forall x\phi\wedge\psi\equiv\forall x(\phi\wedge\psi)$,
\item $\forall x\phi\vee\psi\equiv\exists y\exists z\forall x\big((\phi\wedge y=z)\vee(\psi\wedge y\neq z)\big)$, where $y,z$ are  fresh variables.\qedhere
\end{itemize}
\end{proof}

\begin{theorem}[\cite{Hannula_inc15}]\label{nf_thm_uni}
Every \Inc-formula $\phi(\mathsf{z})$ of the form (\ref{nf_qf}) is semantically equivalent to a formula of the form
\begin{equation}\label{nf_uni}
\exists \mathsf{x}\forall y\big(\mathop{\bigwedge_{1\leq j\leq n}}_{Q^j=\forall}\mathsf{z}x_1\dots x_{j-1}y\subseteq \mathsf{z}x_1\dots x_{j-1}x_j\wedge\theta(\mathsf{x},\mathsf{z})\big),
\end{equation}
where $\mathsf{x}=\langle x_1,\dots, x_n\rangle$, $y$ is  fresh and $\theta$ is the quantifier free formula in (\ref{nf_qf}).
\end{theorem}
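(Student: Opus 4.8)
The plan is to show that $\phi=Q^1x_1\dots Q^nx_n\,\theta(\mathsf{x},\mathsf{z})$ and the target formula $\psi=\exists\mathsf{x}\,\forall y\,(\iota\wedge\theta)$ are satisfied by exactly the same model–team pairs, where I abbreviate $\iota=\bigwedge_{Q^j=\forall}\iota_j$ and write $\iota_j$ for the atom $\mathsf{z}x_1\dots x_{j-1}y\subseteq\mathsf{z}x_1\dots x_{j-1}x_j$. Both $\phi$ and $\psi$ have free variables exactly those occurring in $\mathsf{z}$, so by Locality it suffices to test teams $X$ whose domain is precisely the set of variables in $\mathsf{z}$; I also rename $x_1,\dots,x_n,y$ to be fresh for $\mathsf{dom}(X)$. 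The first, routine step is to unroll the quantifier prefix: by an easy induction, $M\models_X Q^1x_1\dots Q^nx_n\,\theta$ holds iff there is a chain $X=X_0,X_1,\dots,X_n$ with $M\models_{X_n}\theta$ such that, for each $j$, $X_j=X_{j-1}(F_j/x_j)$ for some $F_j\colon X_{j-1}\to\wp(M)\setminus\{\emptyset\}$ when $Q^j=\exists$, and $X_j=X_{j-1}(M/x_j)$ when $Q^j=\forall$. Writing $D_j=\mathsf{dom}(X_j)=\mathsf{dom}(X)\cup\{x_1,\dots,x_j\}$, the bookkeeping fact I will use repeatedly is that, since each supplementing function is nonempty-valued, every assignment in $X_j$ has at least one descendant in $X_n$ agreeing with it on $D_j$, and dually $X_n\rstr D_j=X_j$.

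For the direction $\phi\models\psi$, given such a chain I would take the witness team for $\exists\mathsf{x}$ to be $Y=X_n$: each universal step uses the constant function $s\mapsto M$, which is a legitimate existential supplement, so the block $\exists\mathsf{x}$ does reproduce $Y$ from $X$. Then $M\models_Y\theta$, whence $M\models_{Y(M/y)}\theta$ by Locality since $y$ is fresh; it remains to check $M\models_{Y(M/y)}\iota_j$ for each universal $j$. Decoding the atom on $Y(M/y)$, where the value of $y$ ranges over all of $M$, it asserts that for every $t\in Y$ and every $b\in M$ there is $t'\in Y$ agreeing with $t$ on $\mathsf{z},x_1,\dots,x_{j-1}$ with $t'(x_j)=b$; this follows from $X_j=X_{j-1}(M/x_j)$, for $t\rstr D_j\in X_j$ gives $t\rstr D_j=r(c/x_j)$ with $r\in X_{j-1}$, and then $r(b/x_j)\in X_j$ has a descendant $t'$ in $X_n=Y$ that works. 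Conversely, for $\psi\models\phi$, given a witness $Y$ for $\exists\mathsf{x}$ with $M\models_{Y(M/y)}\iota\wedge\theta$, I would reconstruct the chain by setting $X_j=Y\rstr D_j$, so $X_0=Y\rstr\mathsf{dom}(X)=X$ and $X_n=Y$. Taking $F_j(s)=\{t(x_j):t\in Y,\ t\rstr D_{j-1}=s\}$ yields $X_j=X_{j-1}(F_j/x_j)$ with nonempty values, which already licenses every existential step; for a universal step I must strengthen this to $F_j(s)=M$, i.e.\ $X_j=X_{j-1}(M/x_j)$, and this is exactly what $M\models_{Y(M/y)}\iota_j$ delivers after the same decoding. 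Since $M\models_Y\theta$ (Locality, from $M\models_{Y(M/y)}\theta$) and $X_n=Y$, the chain witnesses $M\models_X\phi$.

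I expect the one genuinely delicate point to be the passage through the atoms $\iota_j$ on the expanded team $Y(M/y)$: one must see precisely that supplementing the fresh $y$ by all of $M$ converts the inclusion atom into a universal constraint forcing $x_j$ to take every value over each fixed context $\mathsf{z}x_1\dots x_{j-1}$, while the existential positions stay entirely unconstrained. The reduction to $\mathsf{dom}(X)=\{\text{variables in }\mathsf{z}\}$ is exactly what guarantees that the coordinates named by $\iota_j$ are the coordinates of $D_{j-1}$, so that "agreeing on the named coordinates" coincides with "restricting equally to $D_{j-1}$"; without it the extra variables of $\mathsf{dom}(X)$ would escape the atom and the identity $X_j=X_{j-1}(M/x_j)$ could fail. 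Everything else—the unrolling of the prefix, the descendant/restriction bookkeeping, and the two appeals to Locality for the $y$-free formula $\theta$—is routine.
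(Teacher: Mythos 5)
Your proof is correct, but it takes a genuinely different route from the paper's. The paper (deferring details to \cite{Hannula_inc15}) argues by stepwise rewriting: it exhaustively applies the local equivalence \eqref{nf_thm_uni_eq1}, which simulates one universal quantifier $\forall v$ at a time by $\exists v$ plus a trailing $\forall y$ guarded by the inclusion atom $\mathsf{z}y\subseteq\mathsf{z}v$, and then collapses the several quantifiers $\forall y_1,\forall y_2,\dots$ so produced into a single $\forall y$ by a second merging equivalence. You instead establish the equivalence of \eqref{nf_qf} and \eqref{nf_uni} in one global argument: you unroll the prefix into a chain of teams $X_0,\dots,X_n$, observe that a universal supplement $X_{j-1}(M/x_j)$ is in particular an admissible existential supplement (so $X_n$ can serve as the witness team for the block $\exists\mathsf{x}$), and show that over $Y(M/y)$ the atom $\mathsf{z}x_1\dots x_{j-1}y\subseteq\mathsf{z}x_1\dots x_{j-1}x_j$ decodes exactly to the closure condition $X_j=X_{j-1}(M/x_j)$, via the bookkeeping identities $X_n\rstr D_j=X_j$ together with the descendant property; your closing observation that the locality reduction to $\mathsf{dom}(X)=\mathrm{vars}(\mathsf{z})$ is what makes agreement on the atom's coordinates coincide with equal restriction to $D_{j-1}$ is indeed the one delicate point, and you handle it correctly in both directions. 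As to what each approach buys: yours is self-contained and elementary, requiring neither an induction on the number of universal quantifiers nor the quantifier-merging lemma, and it proves the equivalence outright rather than sketching it; the paper's factorization isolates reusable local equivalences---notably \eqref{nf_thm_uni_eq1}, which resurfaces in the deduction system as the simulation rule \inci and in the remark following \eqref{inc_nf}---and matches the proof in the cited source, at the cost of leaving those two equivalences unverified in its sketch.
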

\begin{proof}[Proof (idea)]
This theorem is proved by exhaustively  applying the equivalences 
\begin{equation}\label{nf_thm_uni_eq1}
\forall vQ\mathsf{x}\psi(v,\mathsf{x},\mathsf{z})\equiv \exists vQ\mathsf{x}\forall y(\mathsf{z}y\subseteq\mathsf{z}v \wedge \psi(v,\mathsf{x},\mathsf{z})),
\end{equation} 
and 
\(\forall y_1\forall y_2(\mathsf{z}_1y_1\subseteq \mathsf{z}_1v_1\wedge \mathsf{z}_2y_2\subseteq \mathsf{z}_2v_2\wedge\chi(\mathsf{x},\mathsf{z}_1,\mathsf{z}_2,v_1,v_2))\equiv \forall y(\mathsf{z}_1y\subseteq \mathsf{z}_1v_1\wedge \mathsf{z}_2y\subseteq \mathsf{z}_2v_2\wedge\chi(\mathsf{x},\mathsf{z}_1,\mathsf{z}_2,v_1,v_2)).\)
%\begin{equation}\label{nf_thm_uni_eq2}
%\forall vQ\mathsf{x}\forall y\psi(v,\mathsf{x},y,\mathsf{z})\equiv \exists vQ\mathsf{x}\forall y(\mathsf{z}y\subseteq\mathsf{z}v \wedge \psi(v,\mathsf{x},y,\mathsf{z})).
%\end{equation} 
\end{proof}

%The next lemma will be applied in the main argument of this section. We leave its proof to the reader, and cf. Proposition \ref{der_rules}(i).
%\begin{lemma}\label{ext_distr}
%If $x\notin \textsf{Fv}(\psi)$, then $\exists x\phi\wedge\psi\equiv\exists x(\phi\wedge\psi)$.
%\end{lemma}
%\begin{proof}
%Left to the reader. Also cf. Proposition \ref{der_rules}(i).
%\end{proof}

We show next that the quantifier-free formula $\theta$ in the above two theorems can also be turned into an equivalent formula in some normal form.

\begin{lemma}\label{qf_nf_thm}
Every quantifier-free \Inc-formula $\theta(\mathsf{z})$  is semantically equivalent to a formula of the form
\begin{equation}\label{qf_nf}
\exists \mathsf{w}\Big(\bigwedge_{i\in I}\mathsf{u}_i\subseteq \mathsf{v}_i\wedge\alpha(\mathsf{w},\mathsf{z})\Big),
\end{equation}
where $\alpha$ is a first-order quantifier-free formula, and each $\mathsf{u}_i$ and $\mathsf{v}_i$ are sequences of variables from $\mathsf{w}$.
\end{lemma}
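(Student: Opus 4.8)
The plan is to prove the normal form by induction on the structure of the quantifier-free \Inc-formula $\theta(\mathsf{z})$. The target form $\exists \mathsf{w}\big(\bigwedge_{i\in I}\mathsf{u}_i\subseteq \mathsf{v}_i\wedge\alpha(\mathsf{w},\mathsf{z})\big)$ separates a block of existentially quantified ``witness'' variables, a conjunction of inclusion atoms over those variables, and a single first-order quantifier-free core $\alpha$. The base cases are the atomic formulas: if $\theta$ is a first-order literal (i.e.\ $\bot$, $t_1=t_2$, $Rt_1\dots t_n$, or a negation of such), I would take the inclusion-conjunction to be empty (equivalently $\mathsf{z}\subseteq\mathsf{z}$, which is valid) and set $\alpha:=\theta$; if $\theta$ is itself an inclusion atom $\mathsf{x}\subseteq\mathsf{y}$, I would set $\alpha:=\top$ (expressible as a valid first-order formula such as $z=z$) with the single inclusion conjunct $\mathsf{x}\subseteq\mathsf{y}$.

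For the inductive step I would take two formulas already in normal form, say $\theta_1\equiv\exists\mathsf{w}_1(\iota_1\wedge\alpha_1)$ and $\theta_2\equiv\exists\mathsf{w}_2(\iota_2\wedge\alpha_2)$, where I first rename the witness variables $\mathsf{w}_1,\mathsf{w}_2$ apart and disjoint from $\mathsf{z}$. For the conjunction $\theta_1\wedge\theta_2$, I would pull both existential blocks to the front using the distributivity $\exists x\phi\wedge\psi\equiv\exists x(\phi\wedge\psi)$ from Theorem~\ref{nf_thm_qf} (valid since $x\notin\textsf{Fv}(\psi)$ after renaming), obtaining $\exists\mathsf{w}_1\mathsf{w}_2(\iota_1\wedge\alpha_1\wedge\iota_2\wedge\alpha_2)$; regrouping the inclusion conjuncts as $\iota_1\wedge\iota_2$ and the first-order parts as $\alpha_1\wedge\alpha_2$ yields the required shape. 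The disjunction $\theta_1\vee\theta_2$ is the delicate case: the team-semantic split means disjunction does \emph{not} commute trivially with either the existential quantifiers or the inclusion atoms, so I cannot simply merge the two inclusion blocks. I would instead introduce a fresh pair of ``flag'' variables to record which disjunct each assignment came from, rewriting $\iota_1$ and $\iota_2$ as guarded inclusion atoms that are automatically satisfied by assignments belonging to the other disjunct, in the spirit of the trick used for the $\forall x\phi\vee\psi$ clause (equivalence~(\ref{nf_thm_qf_eq1})) in Theorem~\ref{nf_thm_qf}.

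The main obstacle I expect is exactly this disjunction case: converting a disjunction of two inclusion-conjunctions into a single conjunction of inclusion atoms under one existential block. The key idea is that $\mathsf{u}\subseteq\mathsf{v}$ can be relativized so that its requirement only ``fires'' on assignments flagged as belonging to the relevant disjunct, which lets both disjuncts coexist within one team and one inclusion block while the split $X=Y\cup Z$ is recorded by the fresh variables. I would verify the resulting equivalence directly from the satisfaction clauses for $\vee$, $\exists$, and $\subseteq$, checking both directions: given a split witnessing $\theta_1\vee\theta_2$, I construct the combined witness assignment together with the correct flag values; conversely, given a team satisfying the combined formula, I recover the split $X=Y\cup Z$ by reading off the flags. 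Throughout I would rely on the \textbf{Locality} property from the earlier lemma to justify that restricting attention to the free variables of each subformula does no harm, and on \textbf{Union Closure} where I need to recombine subteams after splitting.
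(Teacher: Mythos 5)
Your overall strategy---induction on $\theta$, merging existential blocks for conjunction, and a flag-variable trick for disjunction---is exactly the route the paper takes: its flags are fresh pairs $p,q$ and $p',q'$ tied to the first-order cores by $\alpha_0\leftrightarrow p=q$ and $\alpha_1\leftrightarrow p'=q'$, and the inclusion atoms are ``guarded'' by padding them to $\mathsf{u}_ipq\subseteq\mathsf{v}_ipq$, so that any witness must carry the same flag pattern. However, your base case for inclusion atoms contains a genuine gap that would break precisely the disjunction step you plan. You keep $\mathsf{x}\subseteq\mathsf{y}$ itself as the inclusion conjunct with $\alpha:=\top$. This does not establish the lemma for atoms, since the statement requires each $\mathsf{u}_i,\mathsf{v}_i$ to consist of variables from the quantified block $\mathsf{w}$---and that side condition is not cosmetic. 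The ``automatically satisfied on the other disjunct'' effect is achieved in the paper by letting the existentially quantified variables take a \emph{constant} value $a$ on the part of the team coming from the other disjunct, so each such assignment has $s(\mathsf{u}_i)=s(\mathsf{v}_i)=\langle a,\dots,a\rangle$ and is its own witness for the padded atom. If the atom mentions free variables, you cannot arrange this: their values are fixed by the team. Concretely, over $M=\{0,1\}$ take $X=\{s_1,s_2\}$ with $s_1(x)=0$, $s_1(y)=1$, $s_2(x)=s_2(y)=1$. Then $X\models (x\subseteq y)\vee(x=0)$ via the split $\{s_2\}\cup\{s_1\}$, but with $\alpha=\top$ the flags are forced to satisfy $p=q$ everywhere, the padding is vacuous, and the combined formula would require all of $X$ to satisfy $x\subseteq y$, which fails because $0$ occurs as a value of $x$ but not of $y$. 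So carrying your base case through the induction, the disjunction step is unsound.

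The repair is the paper's base case: rewrite the atom with fresh quantified variables, $\mathsf{x}\subseteq\mathsf{y}\equiv\exists\mathsf{w}\exists\mathsf{u}\,(\mathsf{w}\subseteq\mathsf{u}\wedge\mathsf{w}=\mathsf{x}\wedge\mathsf{u}=\mathsf{y})$, pushing all dependence on free variables into the first-order core as equalities. With that change your inductive step goes through as you describe, and it then coincides with the paper's proof, whose key claim is the single-disjunct equivalence
\[
\exists \mathsf{x}\big(\bigwedge_{i\in I}\mathsf{u}_i\subseteq \mathsf{v}_i\,\wedge\alpha\big)\vee\phi\;\equiv\; \exists \mathsf{x}\exists pq\Big(\bigwedge_{i\in I}\mathsf{u}_ipq\subseteq \mathsf{v}_ipq\,\wedge (\alpha\leftrightarrow p=q)\wedge (\alpha\vee\phi)\Big),
\]
applied once to each disjunct; note also that the disjunction of the two first-order cores simply remains inside the quantifier-free first-order part $\alpha$, which is what lets the inclusion atoms be extracted as top-level conjuncts.
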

\begin{proof}
We prove the lemma by induction on $\theta$. The case when $\theta$ is a first-order formula (including the case $\theta=\neg\alpha$) is trivial. If $\theta=\mathsf{x}\subseteq \mathsf{y}$, clearly
\(\mathsf{x}\subseteq \mathsf{y}\equiv\exists \mathsf{w}\mathsf{u}(\mathsf{w}\subseteq \mathsf{u}\,\wedge\, \mathsf{w}=\mathsf{x}\,\wedge\, \mathsf{u}=\mathsf{y}).\)

Assume  that 
\(\theta_0=\exists \mathsf{w}_0(\iota_0(\mathsf{w}_0)\wedge\alpha_0(\mathsf{w}_0,\mathsf{x}))\text{ and }\theta_1=\exists \mathsf{w}_1(\iota_1(\mathsf{w}_1)\wedge\alpha_1(\mathsf{w}_1,\mathsf{y})),\) where $\alpha_0,\alpha_1$ are first-order and quantifier-free, the sequences
$\mathsf{w}_0$ and $\mathsf{w}_1$ do not have variables in common, 
\begin{equation}\label{qf_nf_thm_eq1}
\iota_0(\mathsf{w}_0)=\bigwedge_{i\in I}\mathsf{u}_i\subseteq \mathsf{v}_i~~\text{ and }~~\iota_1(\mathsf{w}_1)=\bigwedge_{j\in J}\mathsf{u}_j\subseteq \mathsf{v}_j.
\end{equation}
%\[\theta_0\equiv \exists \mathsf{w_0}\Big(\bigwedge_{i\in I}(\mathsf{u_i}\subseteq \mathsf{v_i})\wedge\alpha(\mathsf{w_0},\mathsf{x})\Big)~\text{ and }~\theta_0\equiv \exists \mathsf{w_1}\Big(\bigwedge_{j\in J}(\mathsf{u_j}\subseteq \mathsf{v_j})\wedge\beta(\mathsf{w_1},\mathsf{y})\Big).\]
%$\theta_0=\exists \mathsf{w_0}(\delta_0\wedge\alpha_0)$ and $\theta_1=\exists \mathsf{w_1}(\delta_1\wedge\alpha_1)$.

If $\theta=\theta_0\wedge\theta_1$,  then by (\ref{nf_thm_qf_eq1}) we have
\(\theta_0\wedge\theta_1\equiv\exists \mathsf{w}_0(\iota_0\wedge\alpha_0)\wedge\exists \mathsf{w}_1(\iota_1\wedge\alpha_1)\equiv \exists \mathsf{w}_0\exists \mathsf{w}_1(\iota_0\wedge\iota_1\wedge\alpha_0\wedge\alpha_1).\)
%since $\exists x\phi\wedge\psi\equiv\exists x(\phi\wedge\psi)$ holds whenever $x\notin \textsf{Fv}(\psi)$ (cf. Proposition \ref{der_rules}(i)).

If $\theta=\theta_0\vee\theta_1$, %we have
%\[\theta_0\equiv \exists \mathsf{w_0}\Big(\bigwedge_{i\in I}(\mathsf{u_i}\subseteq \mathsf{v_i})\wedge\alpha(\mathsf{w_0},\mathsf{x})\Big)~\text{ and }~\theta_0\equiv \exists \mathsf{w_1}\Big(\bigwedge_{j\in J}(\mathsf{u_j}\subseteq \mathsf{v_j})\wedge\beta(\mathsf{w_1},\mathsf{y})\Big).\]
we show that $\theta$ is equivalent to
\begin{equation}\label{qf_nf_thm_eq2}
\begin{split}
\psi=\exists \mathsf{w}_0\exists \mathsf{w}_1\exists pqp'q'\Big(&\bigwedge_{i\in I}(\mathsf{u}_ipq\subseteq \mathsf{v}_ipq)\,\wedge \bigwedge_{j\in J}(\mathsf{u}_jp'q'\subseteq \mathsf{v}_jp'q')\\
&\wedge (\alpha_0\vee\alpha_1)\wedge (\alpha_0\leftrightarrow p=q)\wedge (\alpha_1\leftrightarrow p'=q')\Big).
\end{split}
\end{equation}

We first claim that   for any first-order formula $\alpha$, any \Inc-formula $\phi$,
\begin{equation}\label{soundness_incdstr}
\exists \mathsf{x}\big(\bigwedge_{i\in I}\mathsf{u}_i\subseteq \mathsf{v}_i\,\wedge\alpha\big)\vee\phi\equiv \exists \mathsf{x}\exists pq\Big(\bigwedge_{i\in I}\mathsf{u}_ipq\subseteq \mathsf{v}_ipq\,\wedge (\alpha\leftrightarrow p=q)\wedge (\alpha\vee\phi)\Big),
\end{equation}
where each $\mathsf{u}_i$ and $\mathsf{v}_i$ consist of variables from the sequence $\mathsf{x}=\langle x_1,\dots,x_n\rangle$. Then can prove $\theta_0\vee\theta_1\equiv\psi$ by consecutively applying  (\ref{soundness_incdstr}) as follows:
\begin{align*}
~~&\exists \mathsf{w}_0\Big(\bigwedge_{i\in I}\mathsf{u}_i\subseteq \mathsf{v}_i\,\wedge\alpha_0\Big)\vee\theta_1\\
\equiv~~&\exists\mathsf{w}_0\exists pq\Big(\bigwedge_{i\in I}\mathsf{u}_ipq\subseteq \mathsf{v}_ipq\,\wedge (\alpha_0\leftrightarrow p=q)\wedge\Big(\alpha_0\vee\exists \mathsf{w}_1\big(\bigwedge_{j\in J}\mathsf{u}_j\subseteq \mathsf{v}_j\,\wedge\alpha_1\big)\Big)\Big)%\tag{by (\ref{soundness_incdstr})}
\\
\equiv~~&\exists\mathsf{w}_0\exists pq\Big(\bigwedge_{i\in I}\mathsf{u}_ipq\subseteq \mathsf{v}_ipq\,\wedge (\alpha_0\leftrightarrow p=q)\\
&\quad\wedge \exists\mathsf{w}_1\exists p'q'\Big(\bigwedge_{j\in J}\mathsf{u}_jp'q'\subseteq \mathsf{v}_jp'q'\,\wedge (\alpha_1\leftrightarrow p'=q')\wedge (\alpha_0\vee\alpha_1)\Big) \Big)\\
%\tag{by (\ref{soundness_incdstr})}
%\\
\equiv~~&\psi.  
\tag{by (\ref{nf_thm_qf_eq1})}
%(5)~~&\psi\tag{\existsi}
\end{align*}

We now complete the proof by verifying  claim (\ref{soundness_incdstr}). For the direction left to right, 
%it is sufficient to prove that
%\(\exists\mathsf{x}\big(\bigwedge_{i\in I}\mathsf{u}_i\subseteq \mathsf{v}_i\,\wedge\alpha(\mathsf{x},\mathsf{z})\big)\vee\phi(\mathsf{y},\mathsf{z})\models \exists\mathsf{x}\exists pq\chi,\) where
%\(\chi=\bigwedge_{i\in I}\mathsf{u}_ipq\subseteq \mathsf{v}_ipq\,\wedge (\alpha\leftrightarrow p=q)\wedge (\alpha\vee\phi).\)
%Now, 
suppose $M\models_X\exists\mathsf{x}\big(\bigwedge_{i\in I}\mathsf{u}_i\subseteq \mathsf{v}_i\,\wedge\alpha(\mathsf{x},\mathsf{z})\big)\vee\phi(\mathsf{y},\mathsf{z})$. Then there are  teams $Y,Z\subseteq X$ and suitable sequence of functions $\mathsf{F}=\langle F_1,\dots,F_n\rangle$ for $\exists\mathsf{x}$ such that $X=Y\cup Z$, $M\models_{Y(\mathsf{F}/\mathsf{x})}\bigwedge_{i\in I}\mathsf{u}_i\subseteq \mathsf{v}_i\,\wedge\alpha(\mathsf{x},\mathsf{z})$ and $M\models_Z\phi(\mathsf{y},\mathsf{z})$. %Since the formula $\bigwedge_{i\in I}\mathsf{u_i}\subseteq \mathsf{v_i}\,\wedge\alpha(\mathsf{x})$ is closed under unions, we may w.l.o.g. assume that $Y$ is the biggest such team. 
We now define suitable  (sequence of) functions $\mathsf{F}'=\langle F_1',\dots,F_n'\rangle,G,H$ for the quantifications $\exists\mathsf{x},\exists p,\exists q$ as follows: Pick two distinct elements $a,b\in M$. 
%\[F_1'\upharpoonright Y=F_1~\text{ and }~F_1(s)=a\text{ for all }s\in X\setminus Y.\]
%\[F_i'\upharpoonright \textsf{dom}(F_i)=F_i~\text{ and }~F'(s)=a\text{ for }s\in \textsf{dom}(F'_i)\setminus\textsf{dom}(F_i).\]
\begin{itemize}
\item Define $\mathsf{F'}$ in such a way that  the resulting team $(\mathsf{F'}/\mathsf{x})$ satisfies 
\[Y(\mathsf{F'}/\mathsf{x})=Y(\mathsf{F}/\mathsf{x})\text{ and }(X\setminus Y)(\mathsf{F'}/\mathsf{x})=(X\setminus Y)(a/\mathsf{x}),\]
where $(X\setminus Y)(a/\mathsf{x}):=\{s(a/x_1)\cdots(a/x_n)\mid s\in X\setminus Y\}$. We omit here the precise technical definition.
%For each function $F_i':X(F_1'/x_{1})\dots(F_{i-1}'/x_{i-1})\to \wp(M)\setminus\{\emptyset\}$ ($1\leq i\leq n$), let 
%\[F_{i}'(s)=\begin{cases}
%F_i(s)&\text{if }s\upharpoonright \textsf{dom}(X)\in Y;\\
%\{a\}&\text{otherwise.}
%\end{cases}\]
\item Define $G:X(\mathsf{F'}/\mathsf{x})\to \wp(M)\setminus\{\emptyset\}$ by taking $G(s)=\{a\}$.
%such that $X(\mathsf{F'}/\mathsf{x})(G/p)=X(\mathsf{F'}/\mathsf{x})(a/p)$, that is, let $G(s)=\{a\}$. 
\item Define $H:X(\mathsf{F'}/\mathsf{x})(G/p)\to \wp(M)\setminus\{\emptyset\}$ by taking
\[H(s)=\begin{cases}
\{a\}&\text{if }M\models_{s}\alpha;\\%s\upharpoonright\mathsf{x}\mathsf{y}\in Y;\\
\{b\}&\text{otherwise.}
\end{cases}\]
%such that $Y'(H/q)=Y'(a/q)$ and $\overline{Y'}(H/q)=\overline{Y'}(b/q)$, where $Y'$ is the maximal subteam of $X(\mathsf{F'}/\mathsf{x})(G/p)$ satisfying $\alpha$, i.e.,
%\[Y'=\{s\in X(\mathsf{F'}/\mathsf{x})(G/p)\mid M\models_{s}\alpha\}.\]
%and $\overline{Y'}=X(\mathsf{F'}/\mathsf{x})(G/p)\setminus Y'$. That is, we let
%\[H(s)=\begin{cases}
%\{a\}&\text{if }M\models_{s}\alpha;\\%s\upharpoonright\mathsf{x}\mathsf{y}\in Y;\\
%\{b\}&\text{otherwise.}
%\end{cases}\]
\end{itemize}
Put $W=X(\mathsf{F'}/\mathsf{x})(G/p)(H/q)$. Clearly, $M\models_{W}\alpha\leftrightarrow p=q$. It remains to show that $M\models_W\alpha\vee\phi$ and $M\models_W \mathsf{u}_ipq\subseteq \mathsf{v}_ipq$ for all $i\in I$.

For the former, define 
\[U=Y(\mathsf{F'}/\mathsf{x})(G/p)(H/q)\text{ and }V=Z(\mathsf{F'}/\mathsf{x})(G/p)(H/q).\]
%\[U=\{s\in W\mid s\upharpoonright \textsf{dom}(X)\in Y\}~~\text{and }~~V=\{s\in W\mid s\upharpoonright \textsf{dom}(X)\in Z\}.\]
Clearly $W=U\cup V$, as $X=Y\cup Z$. Since $M\models_Z\phi(\mathsf{y},\mathsf{z})$, $M\models_{Y(\mathsf{F}/\mathsf{x})}\alpha(\mathsf{x},\mathsf{z})$ and $Y(\mathsf{F}/\mathsf{x})=Y(\mathsf{F'}/\mathsf{x})$, we obtain $M\models_V\phi(\mathsf{y},\mathsf{z})$ and $M\models_U\alpha(\mathsf{x},\mathsf{z})$ by  locality. 

For the latter, let $s\in W$ be arbitrary. If $s\in U$, since $M\models_Y\mathsf{u}_i\subseteq \mathsf{v}_i$, there exists $t_0\in Y$ such that $t_0(\mathsf{v}_i)=s(\mathsf{u}_i)$. Now, since $M\models_U\alpha(\mathsf{x},\mathsf{z})$, by the definition of $H$ and $G$, we know that $s(q)=a=s(p)$. Thus, for $t=t_0(a/p)(a/q)\in W$, we have
\(t(\mathsf{v}_ipq)=\langle t_0(\mathsf{v}_i),a,a\rangle=s(\mathsf{u}_ipq).\)

If $s\in W\setminus U$, then $s\upharpoonright \mathsf{dom}(X)\in X\setminus Y$ and thereby $s(\mathsf{x})=\langle a,\dots,a\rangle$ by the definition of $\mathsf{F'}$. 
Thus, $s(\mathsf{v}_ipq)=\langle a,\dots,a,s(p),s(q)\rangle=s(\mathsf{u}_ipq)$, namely that $s$ itself is the witness of $\mathsf{u}_ipq\subseteq \mathsf{v}_ipq$ for $s$.

\vspace{0.5\baselineskip}

For the  direction right to left of the claim (\ref{soundness_incdstr}), suppose there are suitable  (sequence of) functions $\mathsf{F}=\langle F_1,\dots,F_n\rangle,G,H$ for the quantifications $\exists\mathsf{x}\exists p\exists q$ such that for $W=X(\mathsf{F}/\mathsf{x})(G/p)(H/q)$, we have that $M\models_W\bigwedge_{i\in I}\mathsf{u}_ipq\subseteq \mathsf{v}_ipq\wedge(\alpha\leftrightarrow p=q)\wedge(\alpha(\mathsf{x},\mathsf{z})\vee\phi(\mathsf{y},\mathsf{z}))$. Then there are teams $U,V\subseteq W$ such that $W=U\cup V$, $M\models_{U}\alpha$ and $M\models_{V}\phi$. Since $\alpha$ is flat, we may let $U\subseteq W$ be the maximal such team.

Consider $Y=U\upharpoonright \mathsf{dom}(X)$ and $Z=V\upharpoonright \mathsf{dom}(X)$. Clearly, $X=Y\cup Z$, and $M\models_Z\phi(\mathsf{y},\mathsf{z})$ by locality. It remains to show that $M\models_Y\exists \mathsf{x}\big(\bigwedge_{i\in I}\mathsf{u}_i\subseteq \mathsf{v}_i\,\wedge\alpha\big)$. 

Define a suitable sequence of functions $\mathsf{F'}=\langle F_1',\dots,F_n'\rangle$ for $\exists\mathsf{x}$ in such a way that $Y(\mathsf{F'}/\mathsf{x})=U\upharpoonright\mathsf{dom}(X)\cup\{x_1,\dots,x_n\}$. We omit the precise technical definition here. Now, since $M\models_{U}\alpha(\mathsf{x},\mathsf{z})$, we have that $M\models_{Y(\mathsf{F'}/\mathsf{x})}\alpha(\mathsf{x},\mathsf{z})$ by locality. To show that $Y(\mathsf{F'}/\mathsf{x})$ satisfies each $\mathsf{u}_i\subseteq \mathsf{v}_i$, take any $s\in Y(\mathsf{F'}/\mathsf{x})$. Let  $\hat{s}\in W$ be an arbitrary extension of $s$. Since $M\models_{W}\mathsf{u}_ipq\subseteq \mathsf{v}_ipq$, there exists $t\in W$ such that $\hat{s}(\mathsf{u}_ipq)=t(\mathsf{v}_ipq)$. Since $M\models_{\hat{s}}\alpha(\mathsf{x},\mathsf{z})$ and $M\models_{W}\alpha\leftrightarrow p=q$, we have that $\hat{s}(p)=\hat{s}(q)$. It then follows that  $t(p)=t(q)$, which in turn implies that $M\models_{t}\alpha$. Then $t\in U$, as $U$ was assumed to be the maximal subteam of $W$ that satisfies $\alpha(\mathsf{x},\mathsf{z})$. Hence, $t_0=t\upharpoonright\mathsf{dom}(X)\cup\{x_1,\dots,x_n\} \in Y(\mathsf{F'}/\mathsf{x})$ and $t_0(\mathsf{v}_i)=t(\mathsf{v}_i)=\hat{s}(\mathsf{u}_i)=s(\mathsf{u}_i)$.
\end{proof}

Finally, by using the above normal form results we obtain the desired more refined normal form as follows.

\begin{theorem}\label{inc_nf_thm}
Every \Inc-formula $\phi(\mathsf{z})$ is semantically equivalent to a formula of the form
\begin{equation}\label{inc_nf_cp}
\exists \mathsf{w}\exists \mathsf{x}\forall y\Big(\bigwedge_{i\in I}\mathsf{u}_i\subseteq \mathsf{v}_i\,\wedge\bigwedge_{j\in J}\mathsf{z}x_1\dots x_{j-1}y\subseteq \mathsf{z}x_1\dots x_{j-1}x_j\,\wedge\alpha(\mathsf{w},\mathsf{x},\mathsf{z})\Big),
\end{equation}
where $\alpha$ is a first-order quantifier-free formula, and each $\mathsf{u}_i$ and $\mathsf{v}_i$ are sequences of variables from $\mathsf{w}$.%\todo{the length of $\mathsf{x}$ is not correct}\todo{order of w and x; order of existential and universal}
\end{theorem}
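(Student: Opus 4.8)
The plan is to chain together the three preceding normal-form results—Theorem~\ref{nf_thm_qf}, Lemma~\ref{qf_nf_thm}, and Theorem~\ref{nf_thm_uni}—but to apply them in a carefully chosen order, so as never to have to interchange a universal quantifier with an existential one. Starting from an arbitrary \Inc-formula $\phi(\mathsf{z})$, I would first invoke Theorem~\ref{nf_thm_qf} to obtain an equivalent formula $Q^1x_1\dots Q^nx_n\theta(\mathsf{x},\mathsf{z})$ of the form (\ref{nf_qf}), with $\theta$ quantifier-free. The next step rewrites only the quantifier-free matrix: by Lemma~\ref{qf_nf_thm} there are fresh variables $\mathsf{w}=\langle w_1,\dots,w_m\rangle$ (which I may assume disjoint from $\mathsf{x}$, $\mathsf{z}$, and the fresh $y$ to come) with $\theta(\mathsf{x},\mathsf{z})\equiv\exists\mathsf{w}\big(\bigwedge_{i\in I}\mathsf{u}_i\subseteq\mathsf{v}_i\wedge\alpha(\mathsf{w},\mathsf{x},\mathsf{z})\big)$, where $\alpha$ is first-order quantifier-free and each $\mathsf{u}_i,\mathsf{v}_i$ consists of variables from $\mathsf{w}$. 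Since replacing a subformula by an equivalent one preserves semantic equivalence, this yields $\phi(\mathsf{z})\equiv Q^1x_1\dots Q^nx_n\exists\mathsf{w}\big(\bigwedge_{i\in I}\mathsf{u}_i\subseteq\mathsf{v}_i\wedge\alpha\big)$.

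The crucial observation is that the appended block $\exists\mathsf{w}$ consists entirely of existential quantifiers, so the displayed formula is \emph{again} of the form (\ref{nf_qf}): its prefix is $Q^1x_1\dots Q^nx_n\exists w_1\dots\exists w_m$ and its genuinely quantifier-free matrix is $\bigwedge_{i\in I}\mathsf{u}_i\subseteq\mathsf{v}_i\wedge\alpha$. I would then apply Theorem~\ref{nf_thm_uni} to this enlarged formula. The theorem collapses the whole prefix into a single existential block followed by one fresh $\forall y$, attaching a barrier inclusion atom to each \emph{universal} quantifier of the old prefix. As the appended $\exists w$ are existential, they contribute no barriers; the barriers are indexed precisely by $J=\{j\le n:Q^j=\forall\}$, and for such $j$ the first $j$ variables of the enlarged prefix are just $x_1,\dots,x_j$, so each barrier reads $\mathsf{z}x_1\dots x_{j-1}y\subseteq\mathsf{z}x_1\dots x_{j-1}x_j$, exactly as in (\ref{inc_nf_cp}).

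Putting this together gives $\phi\equiv\exists x_1\dots x_n\exists w_1\dots w_m\forall y\big(\bigwedge_{j\in J}\mathsf{z}x_1\dots x_{j-1}y\subseteq\mathsf{z}x_1\dots x_{j-1}x_j\wedge\bigwedge_{i\in I}\mathsf{u}_i\subseteq\mathsf{v}_i\wedge\alpha\big)$. Finally, using that consecutive existential quantifiers commute in the lax team semantics—which is straightforward to verify by composing the choice functions—I would reorder the existential block into $\exists\mathsf{w}\exists\mathsf{x}$ to match the prescribed shape, obtaining precisely (\ref{inc_nf_cp}).

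The point to watch—rather than a genuine obstacle—is the order in which the three results are invoked. If one instead applied Theorem~\ref{nf_thm_uni} first and only afterwards replaced $\theta$ via Lemma~\ref{qf_nf_thm}, the fresh existentials $\exists\mathsf{w}$ would sit \emph{inside} the scope of $\forall y$, and pulling them to the front would demand the invalid interchange $\forall y\exists\mathsf{w}\rightsquigarrow\exists\mathsf{w}\forall y$. Folding $\exists\mathsf{w}$ into the quantifier prefix \emph{before} applying Theorem~\ref{nf_thm_uni} sidesteps this entirely, since that theorem already performs the prefix-normalization uniformly over existential and universal quantifiers alike. The only routine facts needed beyond the cited results are the compositionality of $\equiv$ under the \Inc-connectives and quantifiers and the commutation of adjacent existential quantifiers.
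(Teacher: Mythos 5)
Your proposal is correct and follows essentially the same route as the paper's own proof: apply Theorem~\ref{nf_thm_qf}, then Lemma~\ref{qf_nf_thm} to the quantifier-free matrix, then Theorem~\ref{nf_thm_uni} to the resulting prefix-form formula, and finally rearrange the existential quantifiers. Your additional remark about why Theorem~\ref{nf_thm_uni} must come \emph{after} Lemma~\ref{qf_nf_thm} (to avoid the invalid interchange $\forall y\exists\mathsf{w}\rightsquigarrow\exists\mathsf{w}\forall y$) is a correct and worthwhile elaboration of a point the paper leaves implicit.
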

\begin{proof}
By Theorem \ref{nf_thm_qf}, we may assume that $\phi$ is in prenex normal form (\ref{nf_qf}). Furthermore, by Lemma \ref{qf_nf_thm}, the quantifier free formula $\theta$ in (\ref{nf_qf}) is equivalent to a formula of the form (\ref{qf_nf}). Hence, $\phi(\mathsf{z})$ is equivalent to a formula of the form
\[Q^1x_1\dots Q^nx_n\exists \mathsf{w}\Big(\bigwedge_{i\in I}\mathsf{u}_i\subseteq \mathsf{v}_i\,\wedge\alpha(\mathsf{w},\mathsf{x},\mathsf{z})\Big).\]
Finally, by applying Theorem \ref{nf_thm_uni} to the above formula (and rearranging the order of the existential quantifiers) we obtain an equivalent formula of the form (\ref{inc_nf_cp}).
\end{proof}

To simplify notations in the normal form (\ref{inc_nf_cp}), we now introduce some conventions. For any permutation $\mathsf{f}:\{1,\dots,n\}\to\{1,\dots,n\}$ and $k\leq n$, we define a function $\sigma^{\mathsf{f},k}_{(\cdot)}:\mathsf{Var}^n\to \mathsf{Var}^k$ by taking 
\(\sigma^{\mathsf{f},k}_{\mathsf{x}}=x_{\mathsf{f}(1)}\dots x_{\mathsf{f}(k)}\) for any sequence $\mathsf{x}=\langle x_1\dots x_n\rangle$. That is, $\sigma^{\mathsf{f},k}_{\mathsf{x}}$ is a sequence of variables from $\mathsf{x}$. When no confusion arises we drop the superscripts in $\sigma^{\mathsf{f},k}_{\mathsf{x}}$ and write simply $\sigma_{\mathsf{x}}$. We reserve the greek letters $\pi,\rho,\sigma,\tau$ (with or without superscripts) for such functions. The normal form of an \Inc-sentence (with no free variables) can then be written as
\begin{equation}\label{inc_nf}
\exists\mathsf{w}\exists \mathsf{x}\forall y\Big(\bigwedge_{i\in I}\rho^i_{\mathsf{w}}\subseteq \sigma^i_{\mathsf{w}}\,\wedge \bigwedge_{j\in J}\pi^j_{\mathsf{x}}y\subseteq \tau^j_{\mathsf{x}}\wedge\alpha(\mathsf{w},\mathsf{x})\Big).
\end{equation}
%where $\mathsf{x}=\langle x_1,\dots, x_m\rangle$, and each $\pi_{\mathsf{x}}^j=x_{1}\dots x_{j-1}$ and $\tau_{\mathsf{x}}^j=x_1\dots x_j$. \todo{do not specify $\pi_{\mathsf{x}}^j$ and $\tau_{\mathsf{x}}^j$}

%For a sequence $\mathsf{x}=x_1\dots x_n$, we sometimes write $\sigma_{\mathsf{x}}$ for a sequence $x_{\pi(1)}\dots x_{\pi(k)}$ ($k\leq n$) generated by $\pi$ and $\mathsf{x}$. In this case, for a sequence $\mathsf{y}=y_1\dots y_n$, $\sigma_{\mathsf{y}}=y_{\pi(1)}\dots y_{\pi(k)}$. \todo{improve}

%In the sequel we will mainly be focusing on \Inc-sentences, namely \Inc-formulas without free variables, which are always assumed to be in the normal form given in Theorem \ref{inc_nf_thm}, i.e., 
%%\todo{$\rho_{\mathsf{x}}$ is a position map}
%\begin{equation}\label{inc_nf}
%\exists\mathsf{w}\exists \mathsf{x}\forall y\Big(\bigwedge_{i=1}^k\rho^i_{\mathsf{w}}\subseteq \sigma^i_{\mathsf{w}}\,\wedge \bigwedge_{j=1}^m\pi^j_{\mathsf{x}}y\subseteq \tau^j_{\mathsf{x}}\wedge\alpha(\mathsf{w},\mathsf{x})\Big),
%\end{equation}
%where each $\pi_{\mathsf{x}}^j=x_{1}\dots x_{j-1}$ and $\tau_{\mathsf{x}}^j=x_1\dots x_j$. %are sequences of variables from $\mathsf{x}$. 

%\todo{stress (already in the normal form) that this is for the existential quantifier}

% \todo{this deals with the universal qf}

Observe that the formula in the above normal form has only one (explicit) universal quantifier (i.e., $\forall y$). Yet because of the inclusion atoms $\pi^j_{\mathsf{x}}y\subseteq \tau^j_{\mathsf{x}}$ in the formula, some existentially quantified variables from $\mathsf{x}$ are essentially universally quantified  (cf. equivalence \eqref{nf_thm_uni_eq1}). 
%Thanks to this type of implicit universal quantification, only one quantifier alternation is needed in the normal form (\ref{inc_nf}). This feature simplifies a lot the game expressions of \Inc-sentences that we will discuss in the next section. \todo{shorten}

%\todo{by using implicit universal quantifier, one can change the order of the quantifiers: existential quantifiers can be rearranged freely}

\section{Game expression and approximations}\label{sec:game_exp}

In this section, we define the game expression $\Phi$ for every \Inc-sentence $\phi$ (with no free variables) in normal form. Intuitively the  formula $\Phi$ is a first-order sentence of infinite length that simulates all possible plays in the semantic game (in team semantics) of the formula $\phi$. Over countable models $\Phi$ and $\phi$ are equivalent, as we will show in Theorem \ref{approx_game_expr}.
  %for each \Inc-formula $\phi$ in normal form, we introduce an equivalent (over countable models) game expression $\Phi$ in the infinitary logic $L_{\omega_1\omega}$.    \todo{explain the intuitive idea of game expression}
  For a game of finite length $n$, we define a  first-order formula $\Phi^n$ of finite length,  called the {\em $n$-approximation} of $\Phi$. It follows from the model-theoretic argument in \cite{Axiom_fo_d_KV} that $\Phi$ is equivalent to the (infinitary) conjunction of all its approximations $\Phi^n$ over countable recursively saturated models. These game expressions and their finite approximations will be cruicial for proving the completeness theorem for the system of \Inc to be introduced in the next section.%\todo{improve}

Now, let $\phi$ be an \Inc-sentence  (with no free variables). By Theorem \ref{inc_nf_thm}, we may assume that $\phi$ is in normal form (\ref{inc_nf}).
We now define the game expression of $\phi$ as the following first-order sentence $\Phi$  of infinite length: %\todo{simplify}
\begin{align*}
\Phi:=
& \exists \mathsf{w}_0 \exists \mathsf{x}_0\forall y_0\Big(\alpha(\mathsf{w}_0 ,\mathsf{x}_0)\wedge\\
&\exists \mathsf{w}^1\mathsf{x}^1\forall y_1\Big( \alpha_1(\mathsf{w}^1\mathsf{x}^1)
\wedge\gamma_1(\mathsf{w}_0\mathsf{w}^1)\wedge\delta_1(y_0,\mathsf{x}_0\mathsf{x}^1)\wedge\\
%&\exists \mathsf{w}_{k+m+1}\mathsf{x}_{k+m+1}\dots \mathsf{w}_{p_2}\mathsf{x}_{p_2}\forall y_2\Big(\alpha_2(\mathsf{w}_{k+m+1}\mathsf{x}_{k+m+1},\dots,\mathsf{w}_{p_2}\mathsf{x}_{p_2})\\
%&\quad\quad\quad\quad\quad\quad\quad\quad\quad\quad\quad\quad\quad\wedge\gamma_2(\mathsf{w}_{1},\dots, \mathsf{w}_{q_2})\wedge\delta_2(y_0y_1,\mathsf{x}_{0},\dots ,\mathsf{x}_{p_2})\wedge\\
&\quad\quad\quad\quad\quad\quad\quad\quad\quad\quad\quad\quad\quad\quad\dots~ \dots\\
&\exists \mathsf{w}^n\mathsf{x}^n\forall y_n\Big(\alpha_n(\mathsf{w}^n\mathsf{x}^n)\wedge\gamma_n(\mathsf{w}^{n-1}\mathsf{w}^n)\wedge\delta_n(y_0\dots y_n,\mathsf{x}_{0}\mathsf{x}^1\dots \mathsf{x}^{n})\wedge\dots\Big)\quad\dots\quad\Big)\Big),
%&\quad\quad\quad\quad\quad\quad\quad\quad\quad\quad\quad\quad\quad\quad\quad\quad~\dots~\dots~\quad\quad\quad\Big)\quad\quad\dots\quad\quad\Big),
\end{align*}
where 
\begin{itemize}
\item $\mathsf{w}^n=\langle \mathsf{w}_\xi\mid \xi\in E_n\cup U_n\rangle$ and $\mathsf{x}^n=\langle \mathsf{x}_\xi\mid \xi\in E_n\cup U_n\rangle$ with %\todo{too vague}
\begin{itemize}
\item $E_n$ being the set of indices $\langle \xi,i\rangle$ of variables $\mathsf{w}_{\xi,i}$ introduced  as witnesses for each $\rho_{\mathsf{w}}^i\subseteq\sigma^i_{\mathsf{w}}$ with respect to the variables $\mathsf{w}_{\xi}$ from $\mathsf{w}^{n-1}$,
\item $U_n$ being the set of indices $\langle \xi\eta,j\rangle$  of variables $\mathsf{x}_{\xi\eta,j}$ introduced as witnesses for each $\pi^j_{\mathsf{x}}y\subseteq \tau^j_{\mathsf{x}}$  with respect to all new pairs $\mathsf{x}_\xi y_\eta$  with $\mathsf{x}_\xi$ from $\mathsf{x}_0\mathsf{x}^1\dots\mathsf{x}^{n-1}$ and  $y_\eta$ from $y_0\dots y_n$ (write 
\[A_n=\{\xi\eta\mid \langle \xi\eta,j\rangle\in U_n\text{ for some }j\in J\},\] %denote the set consisting of all such new pairs $\xi\eta$ of indices by $A_n$, 
and note that we are requiring that $\xi\eta\notin A_1\cup\dots A_{n-1}$);

\end{itemize}
\item $\displaystyle\alpha_n(\mathsf{w}^n\mathsf{x}^n):=\bigwedge_{\xi\in E_n\cup U_n}\alpha(\mathsf{w}_\xi,\mathsf{x}_\xi)$;
\item $\displaystyle\gamma_n(\mathsf{w}^{n-1}\mathsf{w}^n):=\bigwedge_{\xi\in E_{n-1}}\bigwedge_{i\in I}\rho_{\mathsf{w}_\xi}^i=\sigma^i_{\mathsf{w}_{\xi,i}}$; %with $\langle \xi,i\rangle=p_{n-1}+k(\xi-p_{n-2})+i$;
%\[\mathsf{w}_\xi^{n,i}=\mathsf{w}_{p_{n-1}+k\xi_0+i}\text{ and }\xi=p_{n-2}+\xi_0;\] 
\item $\displaystyle\delta_n(y_0\dots y_{n-1},\mathsf{x}_{0}\mathsf{x}^1\dots\mathsf{x}^n):=\bigwedge_{\xi\eta\in A_n}\bigwedge_{j\in J}\pi^j_{\mathsf{x}_\xi} y_\eta=\tau^j_{\mathsf{x}_{\xi\eta,j}}$.
\end{itemize}
%\todo{move the introduction of $\Phi_n$ to here?}
The formula $\Phi$ is defined in layers that correspond essentially to the plays in the semantic game of the formula $\phi$ (see e.g., \cite{Pietro_thesis} for the definition of the semantic game for \Inc).  Each layer of $\Phi$ consists of the subformula $\exists \mathsf{w}^{n}\mathsf{x}^n\forall y_n(\alpha_n\wedge\gamma_n\wedge\delta_n\wedge\dots)$ with $\mathsf{w}^0\mathsf{x}^0=\mathsf{w}_0\mathsf{x}_0$ and $\alpha_0\wedge\gamma_0\wedge\delta_0=\alpha(\mathsf{w}_0,\mathsf{x}_0)\wedge\top\wedge\top\equiv\alpha(\mathsf{w}_0,\mathsf{x}_0)$.
The intuitive reading of each layer is as follows:  Each layer introduces new existentially quantified variables $\mathsf{w}^n\mathsf{x}^n$  and one universally quantified variable $y_n$, and specifies (in $\alpha_n$) that $\alpha$ holds for the existentially quantified variables $\mathsf{w}^n\mathsf{x}^n$. For each inclusion atom $\rho^i_{\mathsf{w}}\subseteq \sigma^i_{\mathsf{w}}$ in $\phi$, with respect to each sequence $\mathsf{w}_\xi$ of existentially quantified variables introduced in layer $n-1$, a witness sequence $\mathsf{w}_{\xi,i}$ of variables  (as specified in the formula $\gamma_n$), together with the accompanying sequence $\mathsf{x}_{\xi,i}$, are introduced in layer $n$ as part of $\mathsf{w}^n\mathsf{x}^n$. Similarly, for each inclusion atom $\pi^j_{\mathsf{x}}y\subseteq \tau^j_{\mathsf{x}}$ in $\phi$, with respect to each new combination $\mathsf{x}_\xi y_\eta\in A_n$ of existentially quantified variables $\mathsf{x}_\xi$ introduced  up to layer $n-1$ and universally quantified variables $y_\eta$ introduced up to layer $n$, a witness sequence $\mathsf{x}_{\xi\eta,j}$ of variables  (as specified in the formula $\delta_n$) together with the accompanying sequence $\mathsf{w}_{\xi,i}$ are  introduced in layer $n$ as part of $\mathsf{w}^n\mathsf{x}^n$.
Note that $E_{n+1}=\{\langle \xi,i\rangle\mid \xi\in E_{n}\cup U_{n},i\in I\}$ and $U_{n+1}=\{\langle \xi\eta,j\rangle\mid \xi\eta\in A_{n+1},j\in J\}$. %\todo{improve}

We assume that the reader is familiar with the game-theoretic semantics of first-order and infinitary logic. Let us now recall the semantic game $\mathcal{G}(M,\Phi)$ of the formula $\Phi$ over a model $M$, which is an infinite game played  between two players  $\forall$belard and $\exists$loise. At each round the players take turns to pick elements from $M$ for the quantified variables $\mathsf{w}^n\mathsf{x}^n$ and $y_n$, as illustrated in the following table:
%\begin{table}
\begin{center}
\scalebox{0.965}{
\begin{tabular}{c|c|cc|c|cc|c}
round&0&\multicolumn{2}{c|}{1}&~$\cdots$~&\multicolumn{2}{c|}{n}&~$\cdots$\\\hline
$\forall$&&$c_1$&&~$\cdots$~&$c_n$&&~$\cdots$\\\hline
$\exists$&$\mathsf{a}^0\mathsf{b}^0$&&$\mathsf{a}^1\mathsf{b}^1$&~$\cdots$~&&$\mathsf{a}^n\mathsf{b}^n$\!&~$\cdots$
\end{tabular}\label{tab:game}
}
%\caption{Moves in the semantics game $\mathcal{G}(M,\Phi)$.}
\end{center}
%\end{table}
The choices of the two players generate an assignment $\mathfrak{s}$ for the quantified variables $\mathsf{w}^n\mathsf{x}^ny_n$ defined as
\[\mathfrak{s}(\mathsf{w}^n\mathsf{x}^n)=\mathsf{a}^n\mathsf{b}^n~\text{ and }~\mathfrak{s}(y_n)=c_n.\]
The player $\exists$loise {\em wins} the (infinite) game if for each natural number $n$,
\[M\models_{\mathfrak{s}}\alpha_n\wedge\gamma_n\wedge\delta_n.\] 
%where we stipulate $\alpha_0\wedge\gamma_0\wedge\delta_0=\alpha(\mathsf{w}_0,\mathsf{x}_0)\wedge\top\wedge\top\equiv\alpha(\mathsf{w}_0,\mathsf{x}_0)$,
%\[\lambda_n=\gamma(\mathsf{x}_{p_{n-1}k+1},\dots,\mathsf{x}_{p_nk})\wedge\bigwedge_{\mathsf{x}_\xi y_\eta\in A_n}\!\delta(\mathsf{x}_{\xi}y_\eta,\mathsf{x}^{n,1}_{\xi\eta}\dots \mathsf{x}^{n,k}_{\xi\eta})\]
%\begin{equation}\label{win_cond}
%\begin{split}
%&M\models_{s}\theta_n,\text{ where }\\
%&\theta_n=\bigwedge_{\xi\eta\in A_n}\!(\mathsf{z}_{\xi\eta}=\mathsf{x}_\xi y_\eta)\wedge\gamma(\mathsf{x}_{p_{n-1}k+1},\dots,\mathsf{x}_{p_nk})\wedge\bigwedge_{\xi\eta\in A_n}\!\delta(\mathsf{z}_{\xi\eta},\mathsf{x}_{\tau_n(\xi\eta)}\dots \mathsf{x}_{\tau_n(\xi\eta)+k}),
%\end{split}
%\end{equation}
Finally,
\[M\models\Phi\iff\exists\text{loise has a winning strategy in the game }\mathcal{G}(M,\Phi),\]
where a {\em winning strategy} for $\exists$loise is a function that tells her what to choose at each round, and also guarantees her to win every play of the game.
We now show that an \Inc-sentence is semantically equivalent to its game expression  over countable models by using the game-theoretic semantics.%\todo{this will not be used in the completeness proof}%, in the sense that the two sentences have the same  countable models.

\begin{theorem}\label{approx_game_expr}
Let $\phi$ be an \Inc-sentence, and $M$ a model. Then
\begin{enumerate}[label=(\roman*)]
\item $M\models \phi\Longrightarrow M\models\Phi$,
\item and $M\models\Phi\Longrightarrow M\models\phi$, whenever $M$ is a countable model.
\end{enumerate}
\end{theorem}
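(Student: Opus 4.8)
The plan is to prove the two implications separately, reading satisfaction of $\phi$ in team semantics as the existence of a winning strategy for $\exists$loise in $\mathcal{G}(M,\Phi)$. Throughout I take $\phi$ to be the normal-form sentence \eqref{inc_nf}, so that $M\models\phi$ amounts to the existence of a nonempty team $X$ over $\mathsf{w}\mathsf{x}$, obtainable from $\{\emptyset\}$ by the choices for $\exists\mathsf{w}\mathsf{x}$, with $M\models_{X(M/y)}\big(\bigwedge_{i\in I}\rho^i_\mathsf{w}\subseteq\sigma^i_\mathsf{w}\wedge\bigwedge_{j\in J}\pi^j_\mathsf{x}y\subseteq\tau^j_\mathsf{x}\wedge\alpha(\mathsf{w},\mathsf{x})\big)$. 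By Locality the $\rho$-atoms over $X(M/y)$ reduce to conditions on $X$ itself (they omit $y$), whereas the $\pi$-atoms genuinely involve the supplemented team $X(M/y)$; and by Flatness the conjunct $\alpha$ reduces to holding at every single assignment of $X$. These reductions are what I match against the game's winning conditions $\alpha_n$, $\gamma_n$, $\delta_n$.

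For (i) I would take such a witnessing team $X$ and read off a strategy for $\exists$loise, maintaining the invariant that every bundle $\mathsf{w}_\xi\mathsf{x}_\xi$ she has played equals $s_\xi(\mathsf{w})s_\xi(\mathsf{x})$ for some $s_\xi\in X$. In round $0$ she picks any $s_0\in X$ (nonempty) and plays its values. At each later layer she must supply the witnesses demanded by $\gamma_n$ and $\delta_n$, and for these she uses exactly the witnessing assignments provided by the inclusion atoms in $X$: for $\gamma_n$ the atom $\rho^i_\mathsf{w}\subseteq\sigma^i_\mathsf{w}$ yields, for each $s_\xi\in X$, some $s_{\xi,i}\in X$ with $s_\xi(\rho^i_\mathsf{w})=s_{\xi,i}(\sigma^i_\mathsf{w})$, whose $\mathsf{w}\mathsf{x}$-values she plays; for $\delta_n$ she applies $\pi^j_\mathsf{x}y\subseteq\tau^j_\mathsf{x}$ in $X(M/y)$ to the assignment $s_\xi(c/y)$, where $c$ is $\forall$belard's last move, and plays the $\mathsf{w}\mathsf{x}$-values of the resulting witness, whose underlying assignment lies in $X$. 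The invariant is preserved, so $\alpha_n,\gamma_n,\delta_n$ hold at every layer of every play; hence $\exists$loise wins and $M\models\Phi$. This direction uses neither countability nor any enumeration.

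The substantial direction is (ii). Here I would fix a winning strategy for $\exists$loise and, using that $M$ is countable, let $\forall$belard play a single run whose moves $c_1,c_2,\dots$ enumerate all of $M$. Running the strategy against this run produces an assignment $\mathfrak{s}$; I then let $X$ consist of the assignments $s_\xi$ defined by $s_\xi(\mathsf{w})=\mathfrak{s}(\mathsf{w}_\xi)$ and $s_\xi(\mathsf{x})=\mathfrak{s}(\mathsf{x}_\xi)$, as $\xi$ ranges over all bundles ever introduced. The winning conditions translate directly into team-semantic facts: $\alpha_n$ gives $M\models_{s_\xi}\alpha$ for all $\xi$, hence $M\models_X\alpha$ by Flatness; $\gamma$ provides, for each $s_\xi$ and each $i$, a witness $s_{\xi,i}\in X$ introduced one layer later, so $\rho^i_\mathsf{w}\subseteq\sigma^i_\mathsf{w}$ holds in $X$; and $\delta$ provides, for each $s_\xi$ and each universal move $y_\eta$, a witness for $\pi^j_\mathsf{x}y\subseteq\tau^j_\mathsf{x}$. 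The point where countability is indispensable is this last item: because every element $c\in M$ occurs as some $\forall$belard move $y_\eta$, each pair $(\mathsf{x}_\xi,y_\eta)$ is eventually formed and assigned a witness, so the atom holds in the full supplemented team $X(M/y)$ rather than on a mere sub-team. Finally, since $X$ is nonempty, the lax existential quantifier lets $X$ be realized from $\{\emptyset\}$ by suitable choice functions for $\exists\mathsf{w}\mathsf{x}$ (each picking the appropriate fibre of $X$), and assembling the verified conjuncts gives $M\models_{X(M/y)}\psi$, whence $M\models\phi$.

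I expect the main obstacle to be the bookkeeping in (ii): keeping the indexing of the game's witness bundles $\mathsf{w}_\xi\mathsf{x}_\xi$ in step with the assignments of $X$, and checking that every inclusion obligation is discharged exactly once along the chosen run — the $\rho$-type answered automatically one layer later, the $\pi$-type answered once the relevant universal move has appeared. A key observation that keeps this manageable is that a single occurrence of each element as some $y_\eta$ already covers the pair $(\mathsf{x}_\xi,y_\eta)$ for every existential bundle $\mathsf{x}_\xi$, so no dovetailing is required and bare countability of $M$ suffices.
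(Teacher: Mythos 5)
Your proposal is correct and follows essentially the same route as the paper's own proof: in (i) you extract $\exists$loise's strategy from the witnessing team while maintaining the invariant that every played bundle $\mathsf{w}_\xi\mathsf{x}_\xi$ equals $s_\xi(\mathsf{w}\mathsf{x})$ for some $s_\xi\in X$, and in (ii) you run her winning strategy against a $\forall$belard run enumerating the countable $M$, collect the played bundles into a team $X$, and verify the three kinds of conjuncts exactly as the paper does (including the key point that the enumeration makes every pair of an assignment and an element receive a $\pi$-witness). The only slip is cosmetic: when discharging $\delta_n$ the relevant $y_\eta$ need not be $\forall$belard's \emph{last} move but may be any earlier universal move, since new pairs at layer $n$ also combine newly introduced bundles with old universal moves; your method applies verbatim to those pairs.
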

\begin{proof}
(i) Suppose $M\models\phi$. Then, there exists a suitable sequence $\mathsf{F}$ of functions for $\exists \mathsf{w}\exists\mathsf{x}$ such that for $X=\{\emptyset\}(\mathsf{F}/\mathsf{w}\mathsf{x})$,
\begin{equation}\label{approx_game_expr_eq1}
M\models_{X(M/y)} \bigwedge_{i\in I}\rho^i_{\mathsf{w}}\subseteq \sigma^i_{\mathsf{w}}\,\wedge \bigwedge_{j\in J}\pi^j_{\mathsf{x}}y\subseteq \tau^j_{\mathsf{x}}\wedge\alpha(\mathsf{w},\mathsf{x}).
\end{equation}
We prove $M\models\Phi$ by constructing a winning strategy for $\exists$loise  in the semantic game $\mathcal{G}(M,\Phi)$ as follows: %\todo{add a pic?}

\begin{itemize}
\item 
 In round $0$, choose any assignment $s_0$ in $X$, and let $\exists$loise choose $\mathsf{a}^0=s_0(\mathsf{w})$ and $\mathsf{b}^0=s_0(\mathsf{x})$. Let $\mathfrak{s}_0$ be the assignment for $\mathsf{w}_0\mathsf{x}_0$ generated by $\exists$loise's choices so far. By (\ref{approx_game_expr_eq1}), we have that $M\models_{s_0}\alpha(\mathsf{w},\mathsf{x})$, which implies $M\models_{\mathfrak{s}_0}\alpha_0(\mathsf{w}_0,\mathsf{x}_0)$, thus the winning condition is maintained.

%Moreover, for any choice $b_0$ that $\forall$belard can make in round $1$,  the assignment $s(b_0/y)$ generated by the players's choices so far must belong to the team $X(M/y)$.
\item Let $\mathfrak{s}_{n-1}$ be the assignment generated by the choices of the two players up to round $n-1$. Assume that we have maintained that for each $\mathsf{w}_\xi\mathsf{x}_\xi$ in the domain of $\mathfrak{s}_{n-1}$, the assignment $s_\xi$ for $\mathsf{wx}$ defined as $s_\xi(\mathsf{w}\mathsf{x})=\mathfrak{s}_{n-1}(\mathsf{w}_\xi\mathsf{x}_\xi)$ is in $X$, and
%Moreover, for any choice $b_n$ that $\forall$belard makes in round $n$, the assignment $s_\xi(b_n/y)$ must be in $X(M/y)$. 
assume that $\forall$belard has chosen $c_n$ in round $n$. %We now describe $\exists$loise's response in round $n$.

 %Assume that $\forall$belard has chosen $b_n$ in round $n$.

%Clearly, for any choice $b_n$ that $\forall$belard makes in round $n$, we have $\mathfrak{s}_{n}'=\mathfrak{s}_{n-1}(b_n/y)\in X(M/y)$. We now describe $\exists$loise's response in round $n$.

\begin{itemize}
\item For any $\xi\eta\in A_n$ with $\mathsf{a}_{\xi}\mathsf{b}_{\xi}c_{\eta}$ the corresponding choices by the two players in (at most) two earlier than $n$ rounds, the assignment $s_\xi(c_\eta/y)$ must be in $X(M/y)$. For each $j\in J$, since $M\models_{X(M/y)}\pi^j_{\mathsf{x}}y\subseteq \tau^j_{\mathsf{x}}$, there exists $s'\in X(M/y)$ such that $s'(\tau^j_{\mathsf{x}})=\langle s_\xi(\pi^j_{\mathsf{x}}),c_\eta\rangle$. Let $\exists$loise choose $\mathsf{b}_{\xi\eta,j}=s'(\mathsf{x})$ and $\mathsf{a}_{\xi\eta,j}=s'(\mathsf{w})$. Clearly, $\delta_n$ is satisfied by the assignment generated by the players' choices so far, and $s_{\xi\eta,j}=s'\upharpoonright \mathsf{dom}(X)\in X$.

\item Similarly, for any $\xi\in E_{n-1}$ and any $i\in I$, by using the fact that $s_\xi\in X$ and $M\models_{X(M/y)}\rho^i_{\mathsf{w}}\subseteq \sigma^i_{\mathsf{w}}$, we can let $\exists$loise choose $\mathsf{a}_{\xi,i}\mathsf{b}_{\xi,i}$ so that $\gamma_n$ is satisfied by the assignment generated by the players' choices so far, and $s_{\xi,i}\in X$.

%For any $\mathsf{w}_\xi\in\mathsf{w}^{n-1}$ with the corresponding choice $\mathsf{a}_\xi$ of $\exists$loise in round $n-1$, by our construction, there must exists an assignment $s\in X(M/y)$ such that $s(\mathsf{w})=\mathsf{a}_\xi$. For each $i\in I$, since $M\models_{X(M/y)}\rho^i_{\mathsf{w}}\subseteq \sigma^i_{\mathsf{w}}$, there exists $s'\in X(M/y)$ such that $s'(\sigma^i_{\mathsf{w}})=s(\rho^i_{\mathsf{w}})$. Let $\exists$loise choose $\mathsf{a}_{\xi,i}=s'(\mathsf{w})$ and $\mathsf{b}_{\xi,i}=s'(\mathsf{x})$. Clearly, $\rho^i_{\mathsf{a}_\xi}=s(\rho^i_{\mathsf{w}})=s'(\sigma^i_{\mathsf{w}})=\sigma^i_{\mathsf{a}_{\xi,i}}$, thereby $\gamma_n$ is satisfied by the assignment generated by the player's choices so far.

%\item Similarly, for any $\xi\eta\in A_n$ with $\mathsf{a}_{\xi}\mathsf{b}_{\xi}c_{\eta}$ the corresponding choices by the two players in (at most) two rounds $\leq n$, by our construction, there must exists $s\in X(M/y)$ such that $s(\mathsf{wx})=\mathsf{a}_\xi\mathsf{b}_\xi$ and $s(y)=c_\eta$. For each $j\in J$, since $M\models_{X(M/y)}\pi^j_{\mathsf{x}}y\subseteq \tau^j_{\mathsf{x}}$, there exists $s'\in X(M/y)$ such that $s'(\tau^j_{\mathsf{x}})=s(\pi^j_{\mathsf{x}}y)$. Let $\exists$loise choose $\mathsf{b}_{\xi\eta,j}=s'(\mathsf{x})$ and $\mathsf{a}_{\xi\eta,j}=s'(\mathsf{w})$. Clearly, $\delta_n$ is satisfied by the assignment generated by the player's choices so far.
\end{itemize}

Moreover, since $M\models_{X(M/y)}\alpha(\mathsf{w},\mathsf{x})$ and we have maintained that $s_\xi\in X$ for each $\xi\in E_n\cup U_n$, we conclude that each $\alpha(\mathsf{w}_\xi,\mathsf{x}_\xi)$ is satisfied by the assignment $\mathfrak{s}_n$ generated by the choices of the players till round $n$.
%since $M\models_{X(M/y)}\alpha(\mathsf{w},\mathsf{x})$ and the assignment $\mathfrak{s}_n$ generated by the choices of the two players till round $n$ is guaranteed to be in $X(M/y)$, $\mathfrak{s}_n$ satisfies each $\alpha(\mathsf{w}_\xi,\mathsf{x}_\xi)$ for $\mathsf{w}_\xi\mathsf{x}_\xi$ from $\mathsf{w}^{n}\mathsf{x}^{n}$.
\end{itemize}

(ii) Suppose  $M$ is a countable model of $\Phi$, and $\exists$loise has a winning strategy in the game $\mathcal{G}(M,\Phi)$.  Let $\langle c_n\rangle_{n<\omega}$ enumerate all elements of $M$, and let $\forall$belard play $c_n$ at each round $n$.  Suppose $\mathfrak{s}$ is the assignment generated by such choices of $\forall$belard and the corresponding choices of $\exists$loise given by her winning strategy. Let
\[X=\{s_\xi\mid\xi\in E_n\cup U_n,~n<\omega\},\]
where recall that $s_\xi$ is the assignment for $\mathsf{w}\mathsf{x}$ defined as $s_\xi(\mathsf{w}\mathsf{x})=\mathfrak{s}(\mathsf{w}_\xi\mathsf{x}_\xi)$. Observe that $X=\{\emptyset\}(\mathsf{F}/\mathsf{wx})$ for some suitable sequence $\mathsf{F}$ of functions for  $\exists\mathsf{w}\exists\mathsf{x}$.  To show $M\models\phi$,  it suffices to verify that the team $X(M/y)$ satisfies (\ref{approx_game_expr_eq1}).

To see that $M\models_{X(M/y)}\alpha(\mathsf{w},\mathsf{x})$, for any $s_\xi(c_\eta/y)\in X(M/y)$, since $\exists$loise wins the game, we know that $M\models_{\mathfrak{s}}\alpha(\mathsf{w}_\xi,\mathsf{x}_\xi)$, which implies $M\models_{s_\xi}\alpha(\mathsf{w},\mathsf{x})$, as desired.

%For each $s_\xi(c_n/y)\in X(M/y)$, we have $s(\mathsf{wx})=\mathfrak{s}(\mathsf{w}_\xi\mathsf{x}_\xi)$ for some $\xi<\omega$.
%Since $\exists$loise wins the game, we know that $M\models_{\mathfrak{s}}\alpha(\mathsf{w}_\xi,\mathsf{x}_\xi)$, which implies $M\models_{s}\alpha(\mathsf{w},\mathsf{x})$, and thereby $M\models_{X(M/y)}\alpha(\mathsf{w},\mathsf{x})$.

To see that $X(M/y)$ satisfies each $\pi^j_{\mathsf{x}}y\subseteq \tau^j_{\mathsf{x}}$, take any $s_\xi(c_\eta/y)\in X(M/y)$ and assume  $\xi \eta\in A_n$. Since $\exists$loise wins the game, $\mathfrak{s}$ satisfies $\delta_n$, and in particular, $M\models_{\mathfrak{s}}\pi^j_{\mathsf{x}_\xi} y_\eta=\tau^j_{\mathsf{x}_{\xi\eta,j}}$. Thus, for any extension $\hat{s}\in X(M/y)$ of $s_{\xi\eta,j}\in X$, we have that $\hat{s}(\tau^j_{\mathsf{x}})=s_{\xi\eta,j}(\tau^j_{\mathsf{x}})=\mathfrak{s}(\tau^j_{\mathsf{x}_{\xi\eta,j}})=\mathfrak{s}(\pi^j_{\mathsf{x}_\xi} y_\eta)=\langle\mathfrak{s}(\pi^j_{\mathsf{x}_\xi}), c_\eta\rangle=s_\xi(c_\eta/y)(\pi^j_{\mathsf{x}} y)$,  as required.

%Then $s(\mathsf{w}\mathsf{x})=\mathfrak{s}(\mathsf{w}_\xi\mathsf{x}_\xi)$ and $s(y)=\mathfrak{s}(y_\eta)$ for some $\xi,\eta<\omega$, that is, the  elements $s(\mathsf{w}\mathsf{x})$ and $s(y)$ have been chosen by the two players in some round(s) of the game. Let $n$ be the number such that   $\xi \eta\in A_n$. By the definition of $X$, there exists $s'\in X(M/y)$ such that $s'(\mathsf{x})=\mathfrak{s}(\mathsf{x}_{\xi\eta,j})$. Since $\exists$loise wins the game, $\mathfrak{s}$ satisfies $\delta_n$. Thus, $s(\pi^j_{\mathsf{x}} y)=\mathfrak{s}(\pi^j_{\mathsf{x}_\xi} y_\eta)=\mathfrak{s}(\tau^j_{\mathsf{x}_{\xi\eta,j}})=s'(\tau^j_{\mathsf{x}})$,  as required.

By a similar argument, we can also show that $X(M/y)$ satisfies each $\rho^i_{\mathsf{w}}\subseteq \sigma^i_{\mathsf{w}}$. This then finishes the proof.
\end{proof}

For each natural number $n<\omega$, we define the {\em $n$-approximation} $\Phi_n$ of the infinitary sentence $\Phi$ as the finite first-order formula
\[\Phi_n:= \exists \mathsf{w}_0\mathsf{x}_0\forall y_0\big(\alpha_0\wedge  \exists \mathsf{w}^{1}\mathsf{x}^1\forall y_1\big(\alpha_1\wedge\gamma_1\wedge\delta_1\wedge\dots\wedge\exists \mathsf{w}^{n}\mathsf{x}^n\forall y_n(\alpha_n\wedge\gamma_n\wedge\delta_n\underbrace{)\dots\big)\big)}_{n+1}.
\]
The semantic game for $\Phi_n$ over a model $M$, denoted by $\mathcal{G}(M,\Phi_n)$, is defined exactly as the infinite game $\mathcal{G}(M,\Phi)$ except that $\mathcal{G}(M,\Phi_n)$ has only $n+1$ rounds. Using the game theoretic-semantics we show, as in \cite{Axiom_fo_d_KV}, that the $\Phi_n$'s do approximate $\Phi$ over {\em recursively saturated models}, which (recall from, e.g., \cite{BarwiseSchlipf76}) are models $M$ such that for any recursive set $\{\phi_n(\mathsf{x},\mathsf{y})\mid n<\omega\}$ of formulas,
\[M\models\forall \mathsf{x}\Big(\bigwedge_{n<\omega}\exists \mathsf{y}\bigwedge_{m\leq n}\phi_m(\mathsf{x},\mathsf{y})\to\exists \mathsf{y}\bigwedge_{n<\omega}\phi_n(\mathsf{x},\mathsf{y})\Big).\]

\begin{theorem}\label{satu_md_approx_equi}
If $M$ is a recursively saturated (or finite) model, then 
\[M\models\Phi\iff M\models\Phi_n\text{ for all }n<\omega.\]
In particular, if $M$ is a  recursively saturated countable (or finite) model, then
\[M\models\phi\iff M\models\Phi_n\text{ for all }n<\omega.\]
\end{theorem}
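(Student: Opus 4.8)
The plan is to prove the first equivalence $M \models \Phi \iff M \models \Phi_n$ for all $n < \omega$ (over recursively saturated or finite $M$) via the game-theoretic semantics, and then obtain the second equivalence by combining it with Theorem~\ref{approx_game_expr}. The direction $M \models \Phi \Rightarrow M \models \Phi_n$ for all $n$ is the easy half: a winning strategy for $\exists$loise in the infinite game $\mathcal{G}(M,\Phi)$ can be truncated to the first $n+1$ rounds to yield a winning strategy in the finite game $\mathcal{G}(M,\Phi_n)$, since the winning condition $M \models_{\mathfrak{s}} \alpha_m \wedge \gamma_m \wedge \delta_m$ for $m \leq n$ is a sub-requirement of the infinite winning condition. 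This direction needs no saturation hypothesis.

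The substantive direction is $M \models \Phi_n$ for all $n \Rightarrow M \models \Phi$. Here I would follow the argument of \cite{Axiom_fo_d_KV}. First observe that each $\Phi_n$ describes, in ordinary first-order syntax, the existence of a finite play of depth $n+1$ satisfying the local conditions; so from $M \models \Phi_n$ one extracts, for each $n$, a depth-$(n+1)$ winning strategy for $\exists$loise, encoded as the tuple of her responses as a function of $\forall$belard's moves. The goal is to glue these finite strategies into a single strategy winning the infinite game. This is exactly where recursive saturation enters: the family $\{\Phi_n \mid n < \omega\}$ (more precisely, the formulas expressing ``there is a winning response extending the play read off from the parameters'') forms a recursive set, and the saturation condition displayed just before the theorem guarantees that finite approximability at every depth implies the existence of a realizing tuple for the whole $\omega$-indexed conjunction. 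Concretely, I would set up a recursive sequence of formulas $\phi_m(\mathsf{x},\mathsf{y})$ whose parameters $\mathsf{x}$ encode $\forall$belard's choices $c_0,\dots,c_m$ and whose $\mathsf{y}$ encode $\exists$loise's required witnesses through round $m$; the hypothesis $M \models \Phi_n$ for all $n$ yields satisfiability of each finite conjunction $\bigwedge_{m \leq n} \phi_m$, and recursive saturation then produces a single assignment satisfying $\bigwedge_{n<\omega}\phi_n$, i.e.\ a winning strategy for $\exists$loise in $\mathcal{G}(M,\Phi)$. For \emph{finite} $M$ the gluing is automatic by a König's-lemma / compactness-of-finitely-branching-trees argument on the tree of partial winning strategies, so no saturation is needed.

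The main obstacle I expect is the bookkeeping needed to phrase ``$\exists$loise has a depth-$n$ winning strategy'' as a recursive first-order schema in the fixed variables $\mathsf{x},\mathsf{y}$ required by the saturation condition, rather than as a growing list of quantifier blocks. The branching of the game over all of $\forall$belard's possible moves means a naive encoding has strategies that are functions, not tuples; the trick (as in \cite{Axiom_fo_d_KV}) is to use the countability/saturation to fix a single run against a generic $\forall$belard rather than to quantify over all runs at once, so that the data to be realized is a single $\omega$-sequence of witnesses, and the indices $E_n, U_n, A_n$ organizing the witnesses stay recursive in $n$. Once this encoding is in place the realization is immediate from the definition of recursive saturation.

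Finally, the ``in particular'' clause follows formally: if $M$ is recursively saturated and countable (or finite), then by the first equivalence $M \models \Phi_n$ for all $n$ is equivalent to $M \models \Phi$, and by Theorem~\ref{approx_game_expr}(i) and (ii) the latter is equivalent to $M \models \phi$ (using countability for the direction $M \models \Phi \Rightarrow M \models \phi$, and (i) for the converse, which holds over all models).
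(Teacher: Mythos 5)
Your overall skeleton coincides with the paper's: the direction $M\models\Phi\Rightarrow M\models\Phi_n$ by truncating $\exists$loise's winning strategy (no saturation needed), the converse by the argument of Proposition 15 of \cite{Axiom_fo_d_KV}, and the ``in particular'' clause by combining with Theorem~\ref{approx_game_expr} exactly as you do. In fact the paper's printed proof is only this skeleton --- it explicitly omits the hard direction and cites \cite{Axiom_fo_d_KV} --- so your attempt to flesh it out is welcome; but your description of that omitted argument contains a step that would fail as written.

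The problem is your plan to ``fix a single run against a generic $\forall$belard'' so that ``the data to be realized is a single $\omega$-sequence of witnesses.'' That device is legitimate only where countability is assumed, and it is used in the proof of Theorem~\ref{approx_game_expr}(ii) to pass from $M\models\Phi$ to $M\models\phi$ by letting $\forall$belard enumerate $M$; it is not available for the first equivalence, which is asserted for \emph{arbitrary} recursively saturated $M$, and where $M\models\Phi$ requires a strategy for $\exists$loise answering \emph{every} sequence of moves, not surviving one distinguished run. Relatedly, the saturation scheme displayed before the theorem realizes a \emph{finite} tuple $\mathsf{y}$, so you cannot realize the whole $\omega$-indexed conjunction in one application and read off a strategy. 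The correct argument is an iteration maintained position by position: call a finite position $n$-good if $\exists$loise can continue it so that $\lambda_m=\alpha_m\wedge\gamma_m\wedge\delta_m$ holds for $n$ further rounds; the hypothesis $M\models\Phi_n$ for all $n$ makes the empty position $n$-good for every $n$, and given a position that is $n$-good for all $n$ and any move $c$ of $\forall$belard, the formulas $\theta_n(\mathsf{x},\mathsf{y})$ expressing ``$\mathsf{y}$ is a response after which the position is $n$-good'' (with the play so far and $c$ as parameters $\mathsf{x}$) form a recursive set whose finite conjunctions are satisfiable, since $(n+1)$-goodness implies $n$-goodness; one application of saturation then yields a single response good for all $n$. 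Making such a choice at every position (for every possible $\forall$belard move) defines the winning strategy in $\mathcal{G}(M,\Phi)$, with no countability used. For finite $M$ your K\"onig's-lemma alternative is fine (finite models are in any case $\omega$-saturated, so the same argument applies). With this repair your outline agrees with the argument the paper defers to; the remaining parts of your proposal are correct as stated.
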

\begin{proof}
%\todo{omit}
The ``in particular" part follows from Theorem \ref{approx_game_expr}. The direction ``$\Longrightarrow$" of the main claim follows from the observation that a winning strategy for $\exists$loise in the infnite game $\mathcal{G}(M,\Phi)$ is clearly also a winning strategy for $\exists$loise in the finite game $\mathcal{G}(M,\Phi_n)$ for every $n<\omega$. 
The  other direction ``$\Longleftarrow$" follows from a similar argument to that of Proposition 15 in \cite{Axiom_fo_d_KV}, which we omit here. 
\end{proof}

\section{A system of natural deduction for \Inc}

%\todo{No closure under substitution has to be mentioned somewhere}

In this section, we introduce a  system of natural deduction for inclusion logic and prove the soundness theorem of the system. We also prove in the system that every \Inc-formula implies its normal form.

% the normal form of an \Inc-formula is derivable in the system from the formula itself.

\begin{table}[t]
\caption{Rules for equality, connectives and quantifiers}
{\normalsize
\begin{center}
\renewcommand{\arraystretch}{1.8}
\begin{tabular}{|C{0.45\linewidth}C{0.45\linewidth}|}
%\multicolumn{2}{c}{\textbf{NEW RULES}}\\\hline\hline
\hline
%Negation elimination&\emph{Ex falso}\\
%&\\
\AxiomC{}\RightLabel{\eqi}\UnaryInfC{$t=t$}\noLine\UnaryInfC{}\DisplayProof
&\AxiomC{}\noLine\UnaryInfC{$t=t'$}\AxiomC{$\phi(t/x)$}\RightLabel{\eqsub}\BinaryInfC{$\phi(t'/x)$}\noLine\UnaryInfC{}\DisplayProof
%&{\footnotesize No free variable in $t$ become bound in $\phi(t/x)$; the same for $t'$.}
\\\hline
\multicolumn{2}{|c|}{ \AxiomC{}\noLine\UnaryInfC{$[\alpha]$}\noLine\UnaryInfC{$D$}\noLine\UnaryInfC{$\bot$} \RightLabel{\negi ~{\footnotesize(1)}}\UnaryInfC{$\neg\alpha$}\noLine\UnaryInfC{} \DisplayProof
%\quad\quad \AxiomC{$\bot$}\RightLabel{$\mathsf{ex~ falso}$\todo{derivable}}\UnaryInfC{$\phi$} \DisplayProof
\quad \quad\quad \quad\quad\AxiomC{$\alpha$}\AxiomC{$\neg\alpha$}\RightLabel{\nege}\BinaryInfC{$\phi$} \DisplayProof
 %\quad\quad\AxiomC{$\neg\neg\alpha$}\RightLabel{\nnege\todo{not needed}}\UnaryInfC{$\alpha$} \DisplayProof
\quad \quad\quad \quad\quad \AxiomC{[$\neg\alpha$]}\noLine\UnaryInfC{$D$}\noLine\UnaryInfC{$\bot$}\RightLabel{$\mathsf{RAA}$ ~{\footnotesize(1)}}\UnaryInfC{$\alpha$} \DisplayProof}\\\hline
%Conjunction introduction& Conjunction elimination\\
 \AxiomC{}\noLine\UnaryInfC{$\phi$}\AxiomC{$\psi$} \RightLabel{\conji}\BinaryInfC{$\phi\wedge\psi$}\noLine\UnaryInfC{} \DisplayProof&\AxiomC{$\phi$} \RightLabel{\tensori}\UnaryInfC{$\phi\vee\psi$}\DisplayProof\quad\quad\AxiomC{$\phi$} \RightLabel{\tensori}\UnaryInfC{$\psi\vee\phi$}\DisplayProof
\\%\hline
%Disjunction introduction& Disjunction elimination\\
 \AxiomC{$\phi\wedge\psi$} \RightLabel{\conje}\UnaryInfC{$\phi$}\noLine\UnaryInfC{} \DisplayProof\quad\quad \AxiomC{$\phi\wedge\psi$} \RightLabel{\conje}\UnaryInfC{$\psi$}\noLine\UnaryInfC{} \DisplayProof&\AxiomC{$\phi\vee\psi$}\AxiomC{[$\phi$]}\noLine\UnaryInfC{$D_0$}\noLine\UnaryInfC{$\chi$} \AxiomC{}\noLine\UnaryInfC{[$\psi$]}\noLine\UnaryInfC{$D_1$}\noLine\UnaryInfC{$\chi$}\RightLabel{\tensore ~{\footnotesize(2)}} \TrinaryInfC{$\chi$}\noLine\UnaryInfC{}\DisplayProof
\\%[-4pt]
%&{\footnotesize (1) The undischarged assumptions in the derivations $D_0$ and $D_1$ contain first-order formulas only. }\\
\hline
%Existential quantifier introduction&Existential quantifier elimination\\
\multirow{2}{*}{\AxiomC{$\phi(t/x)$}\RightLabel{\existsi}\UnaryInfC{$\exists x\phi$}\DisplayProof}
&\AxiomC{$D_0$}\noLine\UnaryInfC{$\exists x\phi$} \AxiomC{}\noLine\UnaryInfC{$[\phi]$}\noLine\UnaryInfC{$D_1$}\noLine\UnaryInfC{$\psi$} \RightLabel{\existse ~{\footnotesize(3)}}\BinaryInfC{$\psi$}\noLine\UnaryInfC{}\DisplayProof\\
%[-6pt]
%&{\footnotesize (2) $x$ does not occur freely in $\psi$ or in any formula in the undischarged assumptions of $D_1$.}\\
%\end{tabular}
%\begin{tabular}{|C{0.55\linewidth}C{0.37\linewidth}|}
\hline
%Universal quantifier introduction&Universal quantifier elimination\\
\AxiomC{$D$}\noLine\UnaryInfC{$\phi$}\RightLabel{\uqi ~{\footnotesize(4)}}\UnaryInfC{$\forall x\phi$}\DisplayProof ~\AxiomC{}\noLine\UnaryInfC{}\noLine\UnaryInfC{$\forall x\alpha$}\RightLabel{\uqe}\UnaryInfC{$\alpha(t/x)$}\DisplayProof\AxiomC{}\noLine\UnaryInfC{}\noLine\UnaryInfC{$\forall x\phi(\mathsf{y})$}\RightLabel{\uqez ~{\footnotesize(5)}}\UnaryInfC{$\phi(\mathsf{y})$} \DisplayProof
&\AxiomC{$D_0$}\noLine\UnaryInfC{$\forall x\phi$} \AxiomC{}\noLine\UnaryInfC{}\noLine\UnaryInfC{$[\phi(y/x)]$}\noLine\UnaryInfC{$D_1$}\noLine\UnaryInfC{$\psi$} \RightLabel{\uqs~{\footnotesize(6)}}\BinaryInfC{$\forall y\psi$}\noLine\UnaryInfC{}\DisplayProof~\AxiomC{}\noLine\UnaryInfC{}\noLine\UnaryInfC{$\forall x\forall y\phi$}\RightLabel{\uqexc}\UnaryInfC{$\forall y\forall x\phi$}\DisplayProof%\quad\quad\AxiomC{}\noLine\UnaryInfC{}\noLine\UnaryInfC{$\forall x\forall y\phi$}\RightLabel{\uqexc}\UnaryInfC{$\forall y\forall x\phi$}\noLine\UnaryInfC{} \DisplayProof
\\
%{\footnotesize (3) $x$ does not occur freely in any formula in the undischarged assumptions of $D$.}&{\footnotesize (4) $y$ does not occur freely in $\forall x\phi$ or any formula in the undischarged assumptions of  $D_1$.}\\
%\AxiomC{}\noLine\UnaryInfC{$\forall x\alpha$}\RightLabel{\uqe}\UnaryInfC{$\alpha(t/x)$}\noLine\UnaryInfC{} \DisplayProof\quad\AxiomC{}\noLine\UnaryInfC{$\forall x\phi(\mathsf{y})$}\RightLabel{\uqez ~{\footnotesize(5)}}\UnaryInfC{$\phi(\mathsf{y})$}\noLine\UnaryInfC{}  \DisplayProof&
%\\ 
\AxiomC{}\noLine\UnaryInfC{$\forall x\phi$}\AxiomC{}\noLine\UnaryInfC{$\forall x\psi$}\RightLabel{\uqextc}\BinaryInfC{$\forall x(\phi\wedge\psi)$}\noLine\UnaryInfC{} \DisplayProof
&\AxiomC{}\noLine\UnaryInfC{$\forall x\phi(x,\mathsf{v})\vee\psi(\mathsf{v})$}\RightLabel{\!\uqextd~{\footnotesize(7)}}\doubleLine\UnaryInfC{$\exists y\exists z\forall x((\phi\wedge y=z)\vee(\psi\wedge y\neq z))$}\noLine\UnaryInfC{}\DisplayProof\\\hline
%\multicolumn{2}{|c|}{\AxiomC{}\noLine\UnaryInfC{$\forall x\phi(x,\mathsf{v})\vee\psi(\mathsf{v})$}\RightLabel{\!\uqext}\UnaryInfC{$\exists y\exists z\forall x((\phi\wedge y=z)\vee(\psi\wedge y\neq z))$}\noLine\UnaryInfC{}\DisplayProof}\\\hline
\multicolumn{2}{|L{0.9\linewidth}|}{ \footnotesize (1) The undischarged assumptions in the derivation $D$  contain first-order formulas only.  }\\[-10pt]
\multicolumn{2}{|L{0.9\linewidth}|}{ \footnotesize (2) The undischarged assumptions in the derivations $D_0$ and $D_1$ contain first-order formulas only.  }\\[-10pt]
\multicolumn{2}{|L{0.9\linewidth}|}{ \footnotesize (3) $x$ does not occur freely in $\psi$ or in any formula in the undischarged assumptions of $D_1$. }\\[-10pt]
\multicolumn{2}{|L{0.9\linewidth}|}{ \footnotesize (4) $x$ does not occur freely in any formula in the undischarged assumptions of $D$. }\\[-10pt]
\multicolumn{2}{|L{0.9\linewidth}|}{ \footnotesize (5) $x$ is not in the sequence $\mathsf{y}$ of free variables of $\phi$. }\\[-10pt]
\multicolumn{2}{|L{0.9\linewidth}|}{ \footnotesize  (6) $y$ does not occur freely in $\forall x\phi$ or in any formula in the undischarged assumptions of  $D_1$.}\\[-10pt]
\multicolumn{2}{|L{0.9\linewidth}|}{ \footnotesize  (7) $x$ does not occur freely in $\psi(\mathsf{v})$, and $y,z$ are fresh variables.}\\
\hline
%\hline
%%\AxiomC{}\noLine\UnaryInfC{$\exists x\phi\vee\psi$}\RightLabel{($\ast$)~~\existsext}\doubleLine\UnaryInfC{$\exists x(\phi\vee\psi)$} \DisplayProof\todo{both directions are derivable (from \tensore)}
%&\AxiomC{}\noLine\UnaryInfC{$\forall x\phi\vee\psi$}\RightLabel{($\ast$)~~\uqext}\UnaryInfC{$\forall x(\phi\vee\psi)$} \DisplayProof\\
%%{\footnotesize ($\ast$) where $x$ does not occur freely in $\psi$}
%&{\footnotesize ($\ast$) where $x$ does not occur freely in $\psi$}\\\hline
\end{tabular}
\end{center}
}\label{tab:con_quant_rule}
%\label{tab:quantifiers_rule}
\end{table}%

\begin{table}[t]
\caption{Rules for inclusion atoms}
%{\normalsize
\begin{center}
\renewcommand{\arraystretch}{1.8}
\def\ScoreOverhang{0.5pt}
\def\defaultHypSeparation{\hskip .1in}
\begin{tabular}{|C{0.36\linewidth}C{0.54\linewidth}|}
\hline
%Projection and permutation of inclusion&Transitivity of inclusion&Inclusion compression\\
 \multicolumn{2}{|C{0.9\linewidth}|}{\AxiomC{}\noLine\UnaryInfC{$\mathsf{x}\mathsf{y}\mathsf{z}\subseteq \mathsf{u}\mathsf{v}\mathsf{w}$}\RightLabel{\incexc}\UnaryInfC{$\mathsf{y}\mathsf{x}\mathsf{z}\subseteq \mathsf{v}\mathsf{u}\mathsf{w}$}\noLine\UnaryInfC{}\DisplayProof
\quad\quad\quad\AxiomC{}\noLine\UnaryInfC{$\mathsf{x}\mathsf{y}\subseteq \mathsf{u}\mathsf{v}$}\RightLabel{\incctr}\UnaryInfC{$\mathsf{x}\subseteq \mathsf{u}$}\noLine\UnaryInfC{}\DisplayProof
\quad\quad\quad\AxiomC{}\noLine\UnaryInfC{$\mathsf{x}\subseteq\mathsf{y}$}\AxiomC{$\mathsf{y}\subseteq\mathsf{z}$}\RightLabel{\inctr}\BinaryInfC{$\mathsf{x}\subseteq\mathsf{z}$}\noLine\UnaryInfC{}\DisplayProof
}
\\%\hline
\AxiomC{}\noLine\UnaryInfC{}\noLine\UnaryInfC{}\noLine\UnaryInfC{$\mathsf{y}\subseteq\mathsf{x}$}\AxiomC{$\alpha(\mathsf{x}/\mathsf{z})$}\RightLabel{\incc~{\footnotesize(1)}}\BinaryInfC{$\alpha(\mathsf{y}/\mathsf{z})$}\DisplayProof&
\AxiomC{}\noLine\UnaryInfC{[$\mathsf{y}\subseteq\mathsf{x}$]} \AxiomC{}\noLine\UnaryInfC{} \AxiomC{}\noLine\UnaryInfC{[$\neg\alpha(\mathsf{y}/\mathsf{z})$]}\noLine\TrinaryInfC{$D$}
\branchDeduce
\DeduceC{$\bot$}\RightLabel{$\subseteq\!\mathsf{Exp}$~{\footnotesize(2)}}\UnaryInfC{$\alpha(\mathsf{x}/\mathsf{z})$}\noLine\UnaryInfC{}\DisplayProof\\[8pt]\hline
\AxiomC{}\noLine\UnaryInfC{}\noLine\UnaryInfC{$\mathsf{x}\subseteq \mathsf{y}$}\RightLabel{\incwe~{\footnotesize(3)}}\UnaryInfC{$\exists w(\mathsf{x}w\subseteq \mathsf{y}z)$}\noLine\UnaryInfC{}\DisplayProof
&\AxiomC{}\noLine\UnaryInfC{}\noLine\UnaryInfC{$\mathsf{x}\subseteq \mathsf{y}$}\RightLabel{\incwu~{\footnotesize(3)}}\UnaryInfC{$\forall w(\mathsf{x}z\subseteq \mathsf{y}w)$}\noLine\UnaryInfC{}\DisplayProof\\
\AxiomC{}\noLine\UnaryInfC{$\forall \mathsf{x}\phi(\mathsf{x},\mathsf{z})$}\doubleLine\RightLabel{\inci~{\footnotesize(4)}}\UnaryInfC{$\exists \mathsf{x}\forall \mathsf{y}\big(\mathsf{z}\mathsf{y}\subseteq \mathsf{z}\mathsf{x}\wedge \phi(\mathsf{x},\mathsf{z})\big)$}\DisplayProof&
%{\footnotesize Variables from $\mathsf{y}$ are fresh.}
%&{\footnotesize $z$ does not occur freely in any formula in the undischarged assumptions of the derivation $D$.}\\
\AxiomC{}\noLine\UnaryInfC{}\noLine\UnaryInfC{$\displaystyle\exists\mathsf{x}\big(\bigwedge_{i\in I}\rho^i_{\mathsf{x}}\subseteq \sigma^i_{\mathsf{x}}\,\wedge\alpha\big)\vee\phi$}\RightLabel{\incdstr~{\footnotesize(5)}}\UnaryInfC{$\displaystyle\exists\mathsf{x}\exists uv\Big(\bigwedge_{i\in I}\rho^i_{\mathsf{x}}uv\subseteq \sigma^i_{\mathsf{x}}uv\,\wedge (\alpha\leftrightarrow u=v)\wedge (\alpha\vee\phi)\Big)$}\noLine\UnaryInfC{}\DisplayProof\\
%\multicolumn{2}{|c|}{{\footnotesize $p,q$ are fresh variables.}}\\
\hline
 \multicolumn{2}{|L{0.9\linewidth}|}{\footnotesize (1) The free variables of $\alpha(\mathsf{x}/\mathsf{z})$ are among $\mathsf{x}$. }\\[-10pt]
\multicolumn{2}{|L{0.9\linewidth}|}{\footnotesize  (2) The free variables of $\alpha(\mathsf{y}/\mathsf{z})$ are among $\mathsf{y}$, and the variables in  $\mathsf{y}$ do not occur freely in any undischarged assumptions in $D$. } \\[-10pt]
 \multicolumn{2}{|L{0.9\linewidth}|}{\footnotesize (3) $w$ is not among $\mathsf{xy}z$.} \\[-10pt]
 \multicolumn{2}{|L{0.9\linewidth}|}{\footnotesize  (4) $\mathsf{y}$ is a sequence of fresh variables.}\\[-10pt]
  \multicolumn{2}{|L{0.9\linewidth}|}{\footnotesize (5)  $u,v$ are fresh variables.}\\
\hline
\end{tabular}
\end{center}
%}
\label{tab:atom_rule}
\end{table}%

\begin{defn}
The  system of natural deduction  for \Inc consists of the rules for equality, connectives and quantifiers  in Table \ref{tab:con_quant_rule}, and the rules for inclusion atoms  in Table \ref{tab:atom_rule}, where $\alpha$ ranges over first-order formulas,  and the letters  $\mathsf{x},\mathsf{y},\mathsf{z},\dots$ in serif font stand for arbitrary (possibly empty) sequences of variables. The rules with double horizontal bars are invertible, i.e., they can be applied in both directions.%For the rules where the premise and conclusion are separated by double line, the premise and conclusion are provably equivalent. \todo{rephrase} %and $\mathsf{Qu}$ and $\mathsf{Qv}$ are arbitrary sequences of quantifiers.

We write $\Gamma\vdash_{\Inc}\phi$ or simply $\Gamma\vdash\phi$ if $\phi$ is derivable from the set  $\Gamma$ of formulas by applying the rules of the system of \Inc. We write simply $\phi\vdash\psi$ for $\{\phi\}\vdash\psi$. Two formulas $\phi$ and $\psi$ are said to be {\em provably equivalent}, written $\phi\dashv\vdash\psi$, if both $\phi\vdash\psi$ and $\psi\vdash\phi$. 
\end{defn}

As shown in Table \ref{tab:con_quant_rule}, restricted to first-order formulas only our system contains all rules of first-order logic (with equality). But classical rules are in general not sound for non-classical \Inc-formulas, such as the rules for negation and \uqe. As a consequence, our system does not admit {\em uniform substitution}.

%The disjunction $\vee$ is arguably the most interesting connective for logics based on team semantics. 
Recall that  the usual disjunction elimination rule (\tensore) is not sound for dependence and independence logic (see \cite{Axiom_fo_d_KV,Hannula_fo_ind_13}). In our system of \Inc the disjunction  does admit the rule \tensore under the side condition that the undischarged assumptions in the sub-derivations contain classical formulas only. This side condition however, makes, among other things, the usual derivation of the distributive law $\phi\wedge(\psi\vee\chi)/(\phi\wedge \psi)\vee(\phi\wedge\chi)$ not applicable in the system. This distributive law actually fails in \Inc in general, especially when  $\phi$ is not closed downwards. The nonstandard features of the disjunction are also reflected in the rules \uqextd and \incdstr. The invertible rule \uqextd extends the scope of a universal quantifier over a disjunction. The rule \incdstr extends over a disjunction the scope of a existential quantifier as well as that of inclusion atoms.
% that characterize how to extend the scope of a universal quantifier or a existential quantifier with inclusion atoms over a disjunction. 
 These two rules are in a sense ad hoc to the present system. Simplifying these rules is left as future work.

The universal quantifier of \Inc turns out to be a peculiar connective, especially the usual elimination rule $\forall x\phi/\phi(t/x)$  is not in general sound for arbitrary formulas. For instance, we have $\models\forall x(y\subseteq x)$, whereas $\not\models y\subseteq z$. The two weaker elimination rules \uqe and \uqez we include in the system restrict the subformula $\phi$ in the premise either to a first-order formula or a formula in which $x$ is not free. To compensate the weakness of the elimination rules we also add to our system a substitution rule \uqs, an exchange rule \uqexc, and two rules \uqextc and \uqextd for extending the scope of universal quantifier over conjunction and disjunction.  %and an exchange rule \uqexc.
%Apart from the already mentioned \uqext, our system has additionally a substitution rule \uqs.
%Our system also have the additional rules \uqs and \uqext for universal quantifier. 
In this nonstandard setting, the derivations of some  natural and simple rules for universal quantifier become not entirely trivial, as we will illustrate in the next proposition.

% \todo{comment on both \uqe, \uqez} is in general not sound for arbitrary formulas. To compensate the weakness of $\uqe$, 

%\todo{$\phi\wedge(\psi\vee\chi)\vdash(\phi\wedge \psi)\vee(\phi\wedge\chi)$ is not true!}%\todo{rules for $\forall$ are peculiar}

\begin{proposition}\label{der_rules2}
%Let $x\notin \textsf{Fv}(\psi)$ and $y\notin \textsf{Fv}(\phi)$.%, and $y,z\notin \textsf{Fv}(\phi)\cup \textsf{Fv}(\psi)$.
\begin{enumerate}[label=(\roman*)]
%\item\label{der_rules2_c6} $\forall x\phi(\mathsf{y})\vdash\phi(\mathsf{y})$ if $x\notin \mathsf{Fv}(\phi)$.
%\item\label{der_rules2_exc} $\forall x\forall y\phi\vdash\forall y\forall x\phi$
%\item\label{der_rules_c2}  If  $x\notin \textsf{Fv}(\psi)$, then $\forall x(\phi\wedge\psi)\dashv\vdash \forall x\phi\wedge\psi$.
%\item\label{der_rules2_c7} If $x\notin \textsf{Fv}(\phi)$, then $\forall x\phi(\mathsf{y})\vdash\phi(\mathsf{y})$.
%\item\label{der_rules2_c2} $\forall x\forall y\phi\vdash\forall y\phi(y/x)\wedge\forall x\phi(x/y)$.%\todo{$\forall x\forall y\phi\vdash\forall x\phi(x/y)$}
\item\label{der_rules2_c5} $\forall x\phi\vdash\forall y\phi(y/x)$ if $y\notin \mathsf{Fv}(\forall x\phi)$.
%\item\label{der_rules2_c6} $\forall x\forall y\phi\vdash\forall y\forall x\phi$. \todo{move it to somewhere after the next prop}\todo{made it a rule}
\item\label{der_rules2_c3} $\forall x(\phi\wedge\psi)\dashv\vdash\forall x\phi\wedge\forall x\psi$. %\todo{need also the other direction}
%\item\label{der_rules2_c4} $\forall x(\phi\wedge\psi)\vdash\forall x(\phi\wedge\forall y\psi(y/x))$ if $y\notin\mathsf{Fv}(\forall x\psi)$.
\end{enumerate}
\end{proposition}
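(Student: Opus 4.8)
The plan is to prove both parts using only the primitive rules from Tables~\ref{tab:con_quant_rule} and~\ref{tab:atom_rule}, treating the peculiar universal quantifier with care since the usual elimination rule is unavailable. For part~\ref{der_rules2_c5}, I would use the substitution rule \uqs, which is the natural tool here. Starting from the premise $\forall x\phi$, I would open a sub-derivation $D_1$ that assumes $\phi(y/x)$ and immediately concludes $\phi(y/x)$ (a trivial one-line derivation). Applying \uqs to $\forall x\phi$ together with this $D_1$ then yields $\forall y\,\phi(y/x)$. The side condition~(6) for \uqs requires that $y$ not occur freely in $\forall x\phi$ nor in any undischarged assumption of $D_1$; the first is exactly the hypothesis $y\notin\mathsf{Fv}(\forall x\phi)$, and the second is automatic because the only assumption of $D_1$, namely $\phi(y/x)$, is discharged by the rule. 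So this direction reduces to checking the side conditions are met.

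For part~\ref{der_rules2_c3}, I would prove the two directions separately. For the direction $\forall x(\phi\wedge\psi)\vdash\forall x\phi\wedge\forall x\psi$, I would form two parallel sub-derivations: from the assumption $\phi\wedge\psi$ apply \conje to extract $\phi$, and separately apply \conje to extract $\psi$. Feeding each into \uqs (with the bound variable renamed to itself, i.e.\ applying \uqs to $\forall x(\phi\wedge\psi)$ with the sub-derivation $[\phi\wedge\psi]\vdash\phi$) produces $\forall x\phi$, and symmetrically $\forall x\psi$; then \conji combines them into $\forall x\phi\wedge\forall x\psi$. Again the side condition on \uqs holds because the hypothesis $\phi\wedge\psi$ is discharged and $x$ is being reused as its own bound variable. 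For the converse direction $\forall x\phi\wedge\forall x\psi\vdash\forall x(\phi\wedge\psi)$, the rule \uqextc is precisely designed for this: from $\forall x\phi\wedge\forall x\psi$ apply \conje twice to recover $\forall x\phi$ and $\forall x\psi$ as separate premises, then apply \uqextc to obtain $\forall x(\phi\wedge\psi)$ directly.

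The main obstacle I anticipate is not the logical structure of either direction but rather the bookkeeping of side conditions on \uqs, since this rule both eliminates and reintroduces a universal quantifier and its freshness restriction~(6) is easy to violate by accident. In particular, for part~\ref{der_rules2_c3} I must make sure that when I apply \uqs to push a conjunct through the quantifier, the variable $x$ does not leak as a free variable into any undischarged assumption (it does not, since the sole assumption $\phi\wedge\psi$ is discharged). The forward direction of~\ref{der_rules2_c3} could alternatively be packaged more cleanly if a derived rule ``$\forall x\chi,\ [\chi]\vdash\eta\ \Rightarrow\ \forall x\eta$'' (with appropriate side conditions, essentially a congruence form of \uqs) were available; I would check whether such a derived clause has already been established earlier, and if so invoke it to shorten the argument, but the direct application of \uqs suffices and keeps the proof self-contained.
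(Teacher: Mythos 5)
Your proof is correct and takes essentially the same route as the paper: part~\ref{der_rules2_c5} is a direct application of \uqs with the trivial sub-derivation from $[\phi(y/x)]$ to $\phi(y/x)$, and part~\ref{der_rules2_c3} uses \uqs (instantiating the eigenvariable as $x$ itself) together with \conje and \conji for the left-to-right direction, and \uqextc for the converse. Your explicit verification of side condition~(6) simply spells out what the paper's terser proof leaves implicit.
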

\begin{proof}

\ref{der_rules2_c5}. Follows from \uqs, since $y\notin \mathsf{Fv}(\forall x\phi)$.

\ref{der_rules2_c3}. The direction $\forall x\phi\wedge\forall x\psi\vdash\forall x(\phi\wedge\psi)$ follows from \uqextc. For the other direction, since $\phi\wedge\psi\vdash\phi$, by \uqs we derive $\forall x(\phi\wedge\psi)\vdash\forall x\phi$. Similarly $\forall x(\phi\wedge\psi)\vdash\forall x\psi$. Thus $\forall x(\phi\wedge\psi)\vdash\forall x\phi\wedge\forall x\psi$ by \conji.
\end{proof}

%In the sequel, we will often apply the above Propositions  \ref{mon_lem_2} to \ref{qf_nf_syn} without any explicit references to them. 

%Table \ref{tab:atom_rule} contains  the usual rules for identity and the rules for inclusion atoms. 
The exchange rule \incexc and contraction rule \incctr  for inclusion atoms in our system, together with the  rule $\mathsf{xy}\subseteq\mathsf{uv}/\mathsf{xyy}\subseteq\mathsf{uvv}$ that we will derive in the next proposition, are clearly equivalent to the projection rule $x_1\dots x_n\subseteq y_1\dots y_n/x_{i_1}\dots x_{i_k}\subseteq y_{i_1}\dots y_{i_k}$ ($i_1,\dots,i_k\subseteq\{1,\dots,n\}$).  As we mentioned in the introduction, the projection rule, the transitivity rule \inctr and the reflexivity axiom $\mathsf{x}\subseteq\mathsf{x}$ (that we will also derive in the proposition below) form a complete axiomatization of the implication problem of inclusion dependencies in database theory (\cite{CasanovaFaginPapadimitriou84}). The inclusion compression rule \incc  is a slight generalization of a similar rule introduced in \cite{Hannula_fo_ind_13}. The inclusion expansion rule $\subseteq\!\mathsf{Exp}$ %can be viewed as a dual rule of \incc, as it 
characterizes the fact that 
\[\Gamma,\mathsf{y}\subseteq \mathsf{x}\models \alpha(\mathsf{y}/\mathsf{z})\Longrightarrow \Gamma\models \alpha(\mathsf{x}/\mathsf{z})\]
whenever variables in $\mathsf{y}$ are not free in $\Gamma$ (observe that in this case $\Gamma,\mathsf{y}\subseteq \mathsf{x},\neg \alpha(\mathsf{y}/\mathsf{z})\models\bot $ iff $\Gamma,\mathsf{y}\subseteq \mathsf{x}\models \alpha(\mathsf{y}/\mathsf{z})$). The weakening rule via existential quantifier \incwe was  introduced in \cite{HannulaKontinen16}, and the weakening rule via universal quantifier \incwu has a similar flavor. The invertiable simulation rule \inci  characterizes the fact that universal quantifiers can be simulated by existential quantifiers with the help of inclusion atoms. %The  rule \incdstr is a distributive rule over disjunction for formulas containing inclusion atoms. %Simplifying this rules is left as future work.

%Next, we illustrate our system further by deriving the reflexivity axiom $\mathsf{x}\subseteq\mathsf{x}$ and the rule $\mathsf{xy}\subseteq\mathsf{uv}/\mathsf{x}\mathsf{yy}\subseteq\mathsf{u}\mathsf{vv}$ for repetition of inclusion atoms. %As we have mentioned, these two axiom/rule together with the rules \incexc, \incctr, \inctr completely axiomatize the implication problem of inclusion dependencies in database theory (\cite{CasanovaFaginPapadimitriou84}).

% involving  inclusion atoms that will play a role in the proof of an important lemma, Lemma \ref{approx_der}, leading to the completeness theorem. %\todo{move it to somewhere}

\begin{proposition}\label{inc_atm_prop}
%Let $\mathsf{x},\mathsf{y},\mathsf{z},\mathsf{w},\mathsf{u}$ be sequences of variables of the same length.
\begin{enumerate}[label=(\roman*)]
\item\label{inc_atm_prop_c1} $\vdash \mathsf{x}\subseteq\mathsf{x}$.

%\item\label{inc_atm_prop_c6} $\vdash\exists \mathsf{x}(\mathsf{x}\subseteq \mathsf{y})$
%\item\label{inc_atm_prop_c5} $\vdash \exists \mathsf{x}(\mathsf{x}=\mathsf{y})$.
%\item $\vdash \exists \mathsf{x}(\mathsf{x}\subseteq \mathsf{y})$ \todo{not needed?}
\item\label{inc_atm_prop_c2} If $|\mathsf{x}|=|\mathsf{y}|=|\mathsf{z}|$, then $\mathsf{xy}\subseteq \mathsf{zz}\vdash \mathsf{x}=\mathsf{y}$.\footnote{$|\mathsf{x}|$ denotes the length of the sequence $\mathsf{x}$.}
\item\label{inc_atm_rep} $\mathsf{xy}\subseteq\mathsf{uv}\vdash\mathsf{x}\mathsf{yy}\subseteq\mathsf{u}\mathsf{vv}$.
\end{enumerate}
\end{proposition}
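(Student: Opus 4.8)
The plan is to dispatch the three claims in turn, using the simulation rule \inci for the reflexivity axiom \ref{inc_atm_prop_c1}, the compression rule \incc for the collapse \ref{inc_atm_prop_c2}, and for the duplication \ref{inc_atm_rep} an existential-weakening detour that is then routed back through part \ref{inc_atm_prop_c2}. For \ref{inc_atm_prop_c1} I would instantiate the invertible rule \inci with \emph{empty} quantifier blocks. Reading the rule as $\forall\mathsf{a}\,\phi(\mathsf{a},\mathsf{z})\dashv\vdash\exists\mathsf{a}\forall\mathsf{b}(\mathsf{z}\mathsf{b}\subseteq\mathsf{z}\mathsf{a}\wedge\phi(\mathsf{a},\mathsf{z}))$ and taking both $\mathsf{a}$ and $\mathsf{b}$ to be the empty sequence with $\mathsf{z}:=\mathsf{x}$, the inclusion atom $\mathsf{z}\mathsf{b}\subseteq\mathsf{z}\mathsf{a}$ degenerates to $\mathsf{x}\subseteq\mathsf{x}$, so the rule reads $\phi\vdash\mathsf{x}\subseteq\mathsf{x}\wedge\phi$. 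I would take $\phi$ to be a closed first-order theorem such as $\forall v\,(v=v)$, derive it from \eqi and \uqi, apply \inci, and finish with \conje. Here I use explicitly that serif sequences are permitted to be empty.

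For part \ref{inc_atm_prop_c2} the idea is that $\bigwedge_i c_i=c_{n+i}$ becomes a uniform property of its including sequence precisely when that sequence is the doubled block $\mathsf{z}\mathsf{z}$. Concretely, with $n=|\mathsf{x}|=|\mathsf{y}|=|\mathsf{z}|$ and a fresh block $\mathsf{c}=\langle c_1,\dots,c_{2n}\rangle$, set $\alpha(\mathsf{c}):=\bigwedge_{i\le n}c_i=c_{n+i}$. Substituting the including sequence $\mathsf{z}\mathsf{z}$ for $\mathsf{c}$ turns $\alpha$ into $\bigwedge_{i\le n}z_i=z_i$, whose free variables lie among $\mathsf{z}$, so the side condition of \incc is met; substituting the included sequence $\mathsf{x}\mathsf{y}$ turns $\alpha$ into $\bigwedge_{i\le n}x_i=y_i$, i.e.\ $\mathsf{x}=\mathsf{y}$. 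Thus I would first derive $\bigwedge_{i\le n}z_i=z_i$ via \eqi and \conji, and then feed it together with the assumption $\mathsf{x}\mathsf{y}\subseteq\mathsf{z}\mathsf{z}$ into \incc to obtain $\mathsf{x}=\mathsf{y}$.

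The delicate case is the duplication rule \ref{inc_atm_rep}, since no single rule copies a matched coordinate. My plan is: starting from $\mathsf{x}\mathsf{y}\subseteq\mathsf{u}\mathsf{v}$, apply \incwe coordinatewise (opening and closing the intermediate existentials with \existse and \existsi) so as to append a fresh block $\mathsf{w}$ on the left matched against a \emph{second} copy of $\mathsf{v}$ on the right, yielding $\exists\mathsf{w}\,(\mathsf{x}\mathsf{y}\mathsf{w}\subseteq\mathsf{u}\mathsf{v}\mathsf{v})$. I then open this existential by \existse with $\mathsf{w}$ fresh. From the body $\mathsf{x}\mathsf{y}\mathsf{w}\subseteq\mathsf{u}\mathsf{v}\mathsf{v}$, permuting and then dropping the leading $\mathsf{x}\subseteq\mathsf{u}$ block through \incexc and \incctr gives $\mathsf{y}\mathsf{w}\subseteq\mathsf{v}\mathsf{v}$, and part \ref{inc_atm_prop_c2} then forces $\mathsf{y}=\mathsf{w}$. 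Using this equality and \eqsub (with the symmetry of $=$) to replace $\mathsf{w}$ by $\mathsf{y}$ throughout the body produces $\mathsf{x}\mathsf{y}\mathsf{y}\subseteq\mathsf{u}\mathsf{v}\mathsf{v}$; as this conclusion no longer mentions the fresh $\mathsf{w}$, \existse discharges the assumption and closes the derivation.

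The main obstacle is exactly this last step. The naive weakening $\exists\mathsf{w}\,(\mathsf{x}\mathsf{y}\mathsf{w}\subseteq\mathsf{u}\mathsf{v}\mathsf{v})$ is a priori strictly weaker than the target, because an existential witness for $\mathsf{w}$ need not coincide with $\mathsf{y}$; the key point is that matching a fresh left coordinate against a \emph{repeated} right coordinate secretly pins it to $\mathsf{y}$, and this is precisely the fact recovered syntactically by invoking part \ref{inc_atm_prop_c2} and then eliminating $\mathsf{w}$ via equality substitution. Parts \ref{inc_atm_prop_c1} and \ref{inc_atm_prop_c2}, by contrast, are essentially one-line applications of \inci and \incc.
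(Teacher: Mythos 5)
Your proposal is correct and takes essentially the same approach as the paper: part \ref{inc_atm_prop_c2} is exactly the paper's application of \incc (with the instantiating formula $\alpha(\mathsf{c})=\bigwedge_{i\leq n}c_i=c_{n+i}$, which the paper leaves implicit, spelled out), and part \ref{inc_atm_rep} is precisely the paper's derivation --- \incwe to append a fresh block matched against a second copy of $\mathsf{v}$, then part \ref{inc_atm_prop_c2} (after the \incexc/\incctr step the paper elides) to pin that block to $\mathsf{y}$, then \eqsub and \existse. The only deviation is in part \ref{inc_atm_prop_c1}, where you instantiate \inci with empty quantifier blocks and a closed tautology instead of the paper's nondegenerate instance $\vdash\exists z\forall y(\mathsf{x}y\subseteq\mathsf{x}z\wedge\mathsf{x}=\mathsf{x})$ followed by \incctr, \existse and \uqez; this is legitimate, since the paper explicitly stipulates that serif-font sequences may be empty, and it yields a slightly shorter derivation of the same fact via the same key rule.
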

\begin{proof}
\ref{inc_atm_prop_c1} By \eqi we have that $\vdash\mathsf{x}=\mathsf{x}$, which implies $\vdash\forall z(\mathsf{x}=\mathsf{x})$ by \uqi. Now, by applying \inci we derive $\vdash \exists z\forall y(\mathsf{x}y\subseteq \mathsf{x}z\,\wedge\, \mathsf{x}=\mathsf{x})$. Thus $\vdash\exists z\forall y(\mathsf{x}\subseteq \mathsf{x})$ by \incctr. Finally we obtain $\vdash\mathsf{x}\subseteq \mathsf{x}$ by applying \existse and \uqez.

\ref{inc_atm_prop_c2} By \incc we have
$\mathsf{xy}\subseteq \mathsf{zz},\,\mathsf{z}=\mathsf{z}\vdash\mathsf{x}=\mathsf{y}$. Then, since $\vdash \mathsf{z}=\mathsf{z}$ by \eqi, we obtain
 $\mathsf{xy}\subseteq \mathsf{zz}\vdash \mathsf{x}=\mathsf{y}$.
%\begin{align*}
%\mathsf{xy}\subseteq \mathsf{zz},\,\mathsf{w}=\mathsf{z},\,\mathsf{u}=\mathsf{z}\,&\vdash \mathsf{xy}\subseteq \mathsf{wu}\,\wedge\, \mathsf{w}=\mathsf{u}\tag{\eqsub}\\
%&\vdash \mathsf{x}=\mathsf{y},\tag{\incc}
%\end{align*}
%which implies item \ref{inc_atm_prop_c2} by \existse.

%\vspace{0.5\baselineskip}

%We derive item (iv) as follows:
%\begin{align*}
%\mathsf{x}\subseteq \mathsf{y},\,\mathsf{z}\subseteq \mathsf{x}\,&\vdash\mathsf{z}\subseteq\mathsf{y}\tag{\inctr}\\
%&\vdash\exists \mathsf{w}(\mathsf{w}\mathsf{z}\subseteq\mathsf{y}\mathsf{y})\tag{\incw}\\
%&\vdash\exists \mathsf{w}(\mathsf{w}\subseteq\mathsf{y}\wedge \mathsf{w}=\mathsf{z}).\tag{\incctr, item (iii)}
%\end{align*}

\ref{inc_atm_rep} By \incwe we have that $\mathsf{xy}\subseteq\mathsf{uv}\vdash\exists \mathsf{z}(\mathsf{xyz}\subseteq\mathsf{uvv})$. Since $\mathsf{xyz}\subseteq\mathsf{uvv}\vdash \mathsf{y}=\mathsf{z}$ by item \ref{inc_atm_prop_c2}, we conclude that $\mathsf{xy}\subseteq\mathsf{uv}\vdash \exists \mathsf{z}(\mathsf{xyz}\subseteq\mathsf{uvv}\wedge \mathsf{y}=\mathsf{z})\vdash \mathsf{xyy}\subseteq\mathsf{uvv}$ by \eqsub.
\end{proof}

We now prove the Soundness Theorem of our system.

\begin{theorem}[Soundness]
Let $\Gamma\cup\{\phi\}$ be a set of \Inc-formulas. Then
\[\Gamma\vdash\phi\Longrightarrow\Gamma\models\phi.\]
\end{theorem}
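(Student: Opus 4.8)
The plan is to prove soundness by induction on the structure (equivalently, the height) of the derivation $D$ of $\phi$, establishing the stronger statement that if $\Delta$ is the set of undischarged assumptions of $D$ then $\Delta\models\phi$. The toolkit consists of the three properties of \Inc-formulas recorded above --- \emph{Locality}, \emph{Union Closure} and \emph{Flatness} --- together with the \emph{empty team property} ($M\models_\emptyset\psi$ for every \Inc-formula $\psi$, by a routine induction) and a standard substitution lemma ($M\models_X\psi(t/x)\iff M\models_{X(t/x)}\psi$ for $t$ free for $x$, where $X(t/x)=\{s(s(t^M)/x)\mid s\in X\}$). For each rule one assumes the inductive hypothesis for the premises, reading off the semantic consequence associated to each discharging subderivation (with its discharged hypothesis added to the antecedent), and verifies the corresponding consequence for the conclusion.

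Most of the connective and quantifier rules are handled directly. The rules $\conji$, $\conje$, $\tensori$, $\existsi$, $\eqi$ and $\eqsub$ follow immediately from the satisfaction clauses (using the empty team property for $\tensori$ and the substitution lemma for $\existsi$ and $\eqsub$); $\uqi$, $\uqez$, $\uqexc$ and $\uqextc$ follow from the clause for $\forall$ combined with \emph{Locality}, and $\uqe$ from \emph{Flatness}, which reduces the claim to the classical first-order case. The rule $\uqs$ requires combining the substitution lemma with \emph{Locality} to pass from $\forall x\phi$ on $X$ to $\phi(y/x)$ on $X(M/y)$; here the freshness side condition is exactly what \emph{Locality} needs. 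The invertible extension rule $\uqextd$ is the last equivalence in (\ref{nf_thm_qf_eq1}) of Theorem \ref{nf_thm_qf}, so its soundness is inherited. The genuinely non-classical point is the family $\negi$, $\nege$, $\mathsf{RAA}$: their side condition that the relevant undischarged assumptions be first-order lets one invoke \emph{Flatness} to pass between a team $X$ and its singletons $\{s\}$, after which classical pointwise reasoning applies (for $\nege$ one additionally uses the empty team property, since $\alpha$ and $\neg\alpha$ can hold simultaneously only on $\emptyset$). The disjunction elimination $\tensore$ uses the same side condition: if $X=Y\cup Z$ witnesses $\phi\vee\psi$, then the first-order undischarged assumptions persist to $Y$ and $Z$ by downward closure (a consequence of \emph{Flatness}), the inductive hypotheses give $\chi$ on $Y$ and on $Z$, and \emph{Union Closure} then yields $\chi$ on $X$.

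For the inclusion-atom rules, $\incexc$, $\incctr$ and $\inctr$ are immediate from the satisfaction clause for inclusion atoms (they are precisely the projection/permutation and transitivity principles), and $\incc$ follows from \emph{Flatness} together with the fact that the free variables of $\alpha$ lie among the displayed sequence. The weakening rules $\incwe$ and $\incwu$ are verified by exhibiting explicit witnesses: for $\incwu$ one sets $w$, via $X(M/w)$, to the required $z$-value of a witness supplied by $\mathsf{x}\subseteq\mathsf{y}$, and for $\incwe$ one chooses the function $F(s)=\{s''(z)\mid s''\in X,\ s''(\mathsf{y})=s(\mathsf{x})\}$, which is nonempty precisely because $\mathsf{x}\subseteq\mathsf{y}$ holds. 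The invertible rule $\inci$ is the sequence-version of the equivalence (\ref{nf_thm_uni_eq1}) underlying Theorem \ref{nf_thm_uni}, and $\incdstr$ is exactly the claim (\ref{soundness_incdstr}) already proved in full in Lemma \ref{qf_nf_thm}; both are therefore inherited.

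The main obstacle is the inclusion expansion rule $\subseteq\!\mathsf{Exp}$, the only inclusion-atom rule that discharges assumptions and the one that fuses the $\mathsf{RAA}$-style reasoning with a team-construction argument. As indicated in the text, its soundness reduces to showing that $\Delta,\mathsf{y}\subseteq\mathsf{x}\models\alpha(\mathsf{y}/\mathsf{z})$ implies $\Delta\models\alpha(\mathsf{x}/\mathsf{z})$ whenever the variables $\mathsf{y}$ do not occur free in $\Delta$, the passage from the premise $\bot$ to $\alpha(\mathsf{y}/\mathsf{z})$ being the now-familiar singleton/\emph{Flatness} argument for negation. To establish this, given $M\models_X\Delta$ and a fixed $s\in X$, I would form the modified team $X_s=\{r(s(\mathsf{x})/\mathsf{y})\mid r\in X\}$ obtained by freezing $\mathsf{y}$ at the constant value $s(\mathsf{x})$. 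Since $\mathsf{y}\notin\mathsf{Fv}(\Delta)$, \emph{Locality} gives $M\models_{X_s}\Delta$, and $M\models_{X_s}\mathsf{y}\subseteq\mathsf{x}$ holds because $s$ itself witnesses the single $\mathsf{y}$-value $s(\mathsf{x})$. The hypothesis then yields $M\models_{X_s}\alpha(\mathsf{y}/\mathsf{z})$, and since the free variables of $\alpha(\mathsf{y}/\mathsf{z})$ lie among $\mathsf{y}$, \emph{Flatness} evaluated at the assignment freezing $\mathsf{y}=s(\mathsf{x})$ gives exactly $M\models_s\alpha(\mathsf{x}/\mathsf{z})$; as $s\in X$ was arbitrary, $M\models_X\alpha(\mathsf{x}/\mathsf{z})$. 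I expect the bookkeeping in $\subseteq\!\mathsf{Exp}$, together with the careful tracking of which undischarged assumptions must be first-order in the negation and disjunction rules, to be the only delicate parts; everything else is a direct appeal to the satisfaction clauses and the three structural properties.
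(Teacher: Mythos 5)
Your overall strategy---induction on derivations, showing that the undischarged assumptions semantically entail the conclusion, with Locality, Union Closure and Flatness (plus the empty team property) as the toolkit---is exactly the paper's, and your treatments of \tensore, \incc, \incwe, \incwu, \incdstr and the negation rules coincide with the published proof. The one genuine gap is in your treatment of $\subseteq\!\mathsf{Exp}$, namely the preliminary reduction you dismiss as ``the now-familiar singleton/Flatness argument'': the passage from the inductive hypothesis $\Delta,\,\mathsf{y}\subseteq\mathsf{x},\,\neg\alpha(\mathsf{y}/\mathsf{z})\models\bot$ to the statement $\Delta,\,\mathsf{y}\subseteq\mathsf{x}\models\alpha(\mathsf{y}/\mathsf{z})$. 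The singleton argument is legitimate for \negi and $\mathsf{RAA}$ only because those rules carry the side condition that the undischarged assumptions are \emph{first-order}, hence flat and downward closed, so one may pass from $X$ to its singletons $\{s\}$. The rule $\subseteq\!\mathsf{Exp}$ has no such side condition: its $\Delta$ consists of arbitrary \Inc-formulas, and the discharged assumption $\mathsf{y}\subseteq\mathsf{x}$ is itself an inclusion atom; neither is downward closed, so restricting to the subteam of assignments falsifying $\alpha(\mathsf{y}/\mathsf{z})$ (or to singletons) does not preserve the antecedent, and the claimed passage is unjustified as stated.

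The gap is repairable, and the repair is exactly your own freezing construction applied one step earlier: work directly with the $\models\bot$ hypothesis on the frozen team $X_s=X(s(\mathsf{x})/\mathsf{y})$. Since $X_s$ is nonempty and satisfies $\Delta$ (by Locality, as the variables of $\mathsf{y}$ are not free in $\Delta$) and $\mathsf{y}\subseteq\mathsf{x}$, the hypothesis forces $M\not\models_{X_s}\neg\alpha(\mathsf{y}/\mathsf{z})$; hence \emph{some} $t\in X_s$ satisfies $\alpha(\mathsf{y}/\mathsf{z})$, and since every assignment of $X_s$ gives $\mathsf{y}$ the value $s(\mathsf{x})$ and the free variables of $\alpha(\mathsf{y}/\mathsf{z})$ are among $\mathsf{y}$, Locality turns this single witnessing assignment into $M\models_s\alpha(\mathsf{x}/\mathsf{z})$. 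This is precisely the paper's proof; note that it never needs the intermediate claim $M\models_{X_s}\alpha(\mathsf{y}/\mathsf{z})$ on which your version rests. A smaller point: you declare \uqextd and \inci sound ``by inheritance'' from the equivalences quoted in Theorems \ref{nf_thm_qf} and \ref{nf_thm_uni}, but those theorems are cited from the literature with only proof sketches in the paper, and \inci is the sequence version of (\ref{nf_thm_uni_eq1}) rather than literally an instance of it; this is why the paper verifies both rules directly with explicit choice-function constructions, and a self-contained soundness proof should do the same.
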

\begin{proof}
%It suffices to show that all the rules in Tables \ref{tab:con_quant_rule} and \ref{tab:atom_rule} are sound. 
We  only verify the soundness of the nontrivial rules \tensore, \uqextd, \incc, $\subseteq\!\mathsf{Exp}$, \incwe, \incwu and \ince.
The soundness of \incdstr follows from (\ref{soundness_incdstr}) in the proof of the disjunction case of Lemma \ref{qf_nf_thm}. %The proof of the soundness of the other rules are left to the reader. %\todo{Did not use \incc, \inci and Pro, Trs. Are these derivable? \inci is special case of \incdstr (no); \incc is a special case of \ince (?) \ince not sound??}
%\todo{$\mathsf{x}_i\subseteq\mathsf{x}_j\vdash \exists \mathsf{z}(\mathsf{z}_j=\mathsf{x}_i\wedge \mathsf{z}\subseteq \mathsf{x}\wedge \mathsf{z}_i\subseteq \mathsf{z}_j)$. Then together with \inci, one derives $\mathsf{x}_i\subseteq\mathsf{x}_j,\alpha(\mathsf{x})\vdash \exists \mathsf{z}(\mathsf{z}_j=\mathsf{x}_i\wedge \mathsf{z}\subseteq \mathsf{x}\wedge \mathsf{z}_i\subseteq \mathsf{z}_j\wedge \alpha(\mathsf{x}))$}

\vspace{0.3\baselineskip}

\tensore: It suffices to show that $\Delta_0,\phi\models\chi$ and $\Delta_1,\psi\models\chi$ imply $\Delta_0,\Delta_1,\phi\vee\psi\models\chi$ for any two sets $\Delta_0,\Delta_1$ of first-order formulas. Suppose that $M\models_X\Delta_0\cup\Delta_1$, and also that $M\models_X\phi\vee\psi$. Then there exist $Y,Z\subseteq X$ such that $X=Y\cup Z$, $M\models_Y\phi$ and $M\models_Z\psi$. Since formulas in $ \Delta_0\cup\Delta_1$ are closed downwards, we have that $M\models_Y\Delta_0$ and $M\models_Z\Delta_1$. It then follows from the assumption that $M\models_Y\chi$ and $M\models_Z\chi$. Now, since $\chi$ is closed under unions, we conclude that $M\models_X\chi$, as required.

\vspace{0.3\baselineskip}

 \uqextd: We first show that $\forall x\phi(x,\mathsf{v})\vee\psi(\mathsf{v})\models \exists y\exists z\forall x((\phi\wedge y=z)\vee(\psi\wedge y\neq z))$, where $x\notin \textsf{Fv}(\psi)$ and $y,z\notin \textsf{Fv}(\phi)\cup \textsf{Fv}(\psi)$. Suppose $M\models_X\forall x\phi\vee\psi$, where we may w.l.o.g. assume $x,y,z\notin dom(X)$. 
 Then there exist $Y,Z\subseteq X$ such that $X=Y\cup Z$, $M\models_{Y(M/x)}\phi$ and $M\models_Z\psi$. Define functions $F:X\to\wp(M)\setminus \{\emptyset\}$ and $G:X(F/y)\to \wp(M)\setminus \{\emptyset\}$ as follows: Let $a,b$ be two distinct elements in $M$. 
 \[ F(s)=\begin{cases}
 \{a\}&\text{if }s\in Y\setminus Z,\\
  \{a,b\}&\text{if }s\in Y\cap Z,\\
   \{b\}&\text{if }s\in Z\setminus Y,\\
 \end{cases}\quad\text{and}\quad G(s)=\{a\}.\]
 Now, we split the team $X'=X(F/y)(G/z)(M/x)$ into $W=\{s\in X'\mid s(y)=a\}$ and $U=\{s\in X'\mid s(y)=b\}$. Clearly, $M\models_W y=z$ and $M\models_U y\neq z$. Observe that $W\upharpoonright (dom(X)\cup\{x\})=Y(M/x)$ and $U\upharpoonright dom(X)=Z$. Since $M\models_{Y(M/x)}\phi$ and $M\models_Z\psi$, we conclude that $M\models_{W}\phi$ and $M\models_U\psi$.
  
Conversely, suppose  $M\models_X \exists y\exists z\forall x((\phi(x,\mathsf{v})\wedge y=z)\vee(\psi(\mathsf{v})\wedge y\neq z))$. Then there are suitable functions $F,G$ and teams $Y,Z\subseteq X(F/y)(G/z)(M/x)$ such that $X(F/y)(G/z)(M/x)=Y\cup Z$, $M\models_{Y}\phi\wedge y=z$ and $M\models_Z\psi\wedge y\neq z$. Put $Y'=Y\upharpoonright dom(X)$ and $Z'=Z\upharpoonright dom(X)$. Clearly $X=Y'\cup Z'$. To show that $M\models_X\forall x\phi(x,\mathsf{v})\vee\psi(\mathsf{v})$, it then suffices to verify $M\models_{Y'}\forall x\phi$ and $M\models_{Z'}\psi$. The latter is clear, since $M\models_Z\psi$ and $x,y,z\notin \mathsf{Fv}(\psi)$. To see the former, first observe that for any $s\in Y$ and any $a\in M$, since $s(a/x)(y)=s(y)=s(z)=s(a/x)(z)$, we must have that $s(a/x)\notin Z$, or $s(a/x)\in Y$. This shows that $Y=Y(M/x)$, thus $Y\upharpoonright (dom(X)\cup\{x\})=(Y\upharpoonright dom(X))(M/x)= Y'(M/x)$. Now, since $Y\upharpoonright (dom(X)\cup\{x\})$ satisfies $\phi$, we conclude $M\models_{Y'(M/x)}\phi$, and thus $M\models_{Y'}\forall x\phi$ as required.

 \vspace{0.3\baselineskip}

\incc: Suppose $M\models_X\mathsf{y}\subseteq \mathsf{x}$ and $M\models_X\alpha(\mathsf{x}/\mathsf{z})$, where the free variables of $\alpha(\mathsf{x}/\mathsf{z})$ are among $\mathsf{x}$. We show that $M\models_X\alpha(\mathsf{y}/\mathsf{z})$. For any $s\in X$, since $M\models_X\mathsf{y}\subseteq \mathsf{x}$, there exists $s'\in X$ such that $s'(\mathsf{x})=s(\mathsf{y})$. Since $M\models_X\alpha(\mathsf{x}/\mathsf{z})$ and $\alpha$ is first-order, we have that $M\models_{s'}\alpha(\mathsf{x}/\mathsf{z})$, which implies $M\models_{s}\alpha(\mathsf{y}/\mathsf{z})$ by the locality property. Hence, we conclude that $M\models_X\alpha(\mathsf{y}/\mathsf{z})$.

\vspace{0.3\baselineskip}

$\subseteq\!\mathsf{Exp}$: Assume $\Gamma,\mathsf{y}\subseteq \mathsf{x},\neg\alpha(\mathsf{y}/\mathsf{z})\models\bot$, where the free variables of $\alpha(\mathsf{y}/\mathsf{z})$ are among $\mathsf{y}$, and the variables  in $\mathsf{y}$ do not occur freely in $\Gamma$. We show that $\Gamma\models\alpha(\mathsf{x}/\mathsf{z})$. Suppose that $M\models_X\Gamma$ for some nonempty team $X$. It suffices to show that $M\models_s \alpha(\mathsf{x}/\mathsf{z})$ for any $s\in X$. Consider the team $X(s(\mathsf{x})/\mathsf{y})$. Clearly, $M\models_{X(s(\mathsf{x})/\mathsf{y})}\mathsf{y}\subseteq\mathsf{x}$. On the other hand, since the variables  in $\mathsf{y}$ do not occur freely in $\Gamma$, by locality we obtain that $M\models_{X(s(\mathsf{x})/\mathsf{y})} \Gamma$. Now, since $X(s(\mathsf{x})/\mathsf{y})\neq\emptyset$, the assumption $\Gamma,\mathsf{y}\subseteq \mathsf{x},\neg\alpha(\mathsf{y}/\mathsf{z})\models\bot$ gives that $M\not\models_{X(s(\mathsf{x})/\mathsf{y})}\neg\alpha(\mathsf{y}/\mathsf{z})$, which by locality implies that $M\models_{s}\alpha(\mathsf{x}/\mathsf{z})$, as required.

% if $M\models_{X(s(\mathsf{x})/\mathsf{y})}\neg\alpha(\mathsf{y}/\mathsf{z})$, then the assumption gives that $X(s(\mathsf{x})/\mathsf{y})$, which by the empty team property

\vspace{0.3\baselineskip}

\incwe: It suffices to show that $\Gamma\models\mathsf{x}\subseteq\mathsf{y}$ implies $\Gamma\models\exists w(\mathsf{x}w\subseteq \mathsf{y}z)$, where $w$ is not among $\mathsf{xy}z$. %, where $w$ does not occur freely in any formula in $\Gamma$. 
Suppose $M\models_X \Gamma$. %where we may w.l.o.g. assume that $w\notin dom(X)$.
By the assumption, $M\models_{X}\mathsf{x}\subseteq\mathsf{y}$, meaning that for any $s\in X$, there exists $s'\in X$ such that $s'(\mathsf{y})=s(\mathsf{x})$. Now, to show that $M\models_X\exists w(\mathsf{x}w\subseteq \mathsf{y}z)$, we define a function $F:X\to \wp(M)\setminus\{\emptyset\}$ by taking
\(F(s)=\{s'(z)\}.\)

To show that $M\models_{X(F/w)}\mathsf{x}w\subseteq \mathsf{y}z$, 
take any $s\in X(F/w)$. Consider the witness $s_0'\in X$ for $\mathsf{x}\subseteq\mathsf{y}$ with respect to $s_0=s\upharpoonright dom(X)$. We have
\(s(\mathsf{x}w)=s_0(\mathsf{x})s(w)=s_0'(\mathsf{y})s_0'(z).\)
Hence, any extension of $s_0'$ in $X(F/w)$ witnesses $\mathsf{x}w\subseteq \mathsf{y}z$.

\vspace{0.3\baselineskip}

\incwu: It suffices to show that  $\Gamma\models\mathsf{x}\subseteq\mathsf{y}$ implies  $\Gamma\models\forall w(\mathsf{x}z\subseteq \mathsf{y}w)$, where $w$ is not among $\mathsf{xy}z$. Suppose $M\models_X\Gamma$, where we may assume w.l.o.g. that  $w\notin dom(X)$ (if not, rename the bound variable $w$ in $\forall w(\mathsf{x}z\subseteq \mathsf{y}w)$).
It then follows by locality that $M\models_{X(M/w)} \Gamma$ as well, and thus $M\models_{X(M/w)}\mathsf{x}\subseteq\mathsf{y}$ by assumption. To show  $M\models_{X(M/w)}\mathsf{x}z\subseteq \mathsf{y}w$, take an arbitrary $s\in X(M/w)$. Since $M\models_{X(M/w)}\mathsf{x}\subseteq\mathsf{y}$, there exists $s'\in X(M/w)$ such that $s'(\mathsf{y})=s(\mathsf{x})$.  Clearly, the assignment $s''=s'(s(z)/w)$ belongs to the team $X(M/w)$, and $s''(\mathsf{y}w)=s'(\mathsf{y})s(z)=s(\mathsf{x}z)$, as required.

\vspace{0.3\baselineskip}

\inci: For the top to bottom direction, suppose $M\models_X\forall \mathsf{x}\phi(\mathsf{x},\mathsf{z})$. We show that $M\models_X \exists \mathsf{x}\forall \mathsf{y}\big(\mathsf{z}\mathsf{y}\subseteq \mathsf{z}\mathsf{x}\wedge \phi(\mathsf{x},\mathsf{z})\big)$, where variables from $\mathsf{y}$ are fresh.  Let $\mathsf{x}=\langle x_1,\dots,x_n\rangle$. Define a sequence $\mathsf{F}=\langle F_1,\dots,F_n\rangle$ of functions for $\exists\mathsf{x}$ by taking $F_i(s)=M$ for each $F_i$ from $\mathsf{F}$, namely, we let $X(\mathsf{F}/\mathsf{x})=X(M/\mathsf{x})$.
It suffices to show that $M\models_{X(\mathsf{F}/\mathsf{x})(M/\mathsf{y})}\mathsf{z}\mathsf{y}\subseteq \mathsf{z}\mathsf{x}\wedge \phi(\mathsf{x},\mathsf{z})$, or $M\models_{X(M/\mathsf{x})(M/\mathsf{y})}\mathsf{z}\mathsf{y}\subseteq \mathsf{z}\mathsf{x}\wedge \phi(\mathsf{x},\mathsf{z})$

By assumption,  $M\models_{X(M/\mathsf{x})}\phi(\mathsf{x},\mathsf{z})$, which implies $M\models_{X(M/\mathsf{x})(M/\mathsf{y})}\phi(\mathsf{x},\mathsf{z})$. To show that $\mathsf{z}\mathsf{y}\subseteq \mathsf{z}\mathsf{x}$ is also satisfied by $X(M/\mathsf{x})(M/\mathsf{y})$, take any $s\in X(M/\mathsf{x})(M/\mathsf{y})$. Observe that the function $s'=s(s(\mathsf{y})/\mathsf{x})$ belongs to the team $X(M/\mathsf{x})(M/\mathsf{y})$, and we have that $s'(\mathsf{z}\mathsf{x})=s(\mathsf{z})s(\mathsf{y})$, as required.

\vspace{0.2\baselineskip}

For the bottom to top direction, suppose $M\models_X \exists \mathsf{x}\forall \mathsf{y}\big(\mathsf{z}\mathsf{y}\subseteq \mathsf{z}\mathsf{x}\wedge \phi(\mathsf{x},\mathsf{z})\big)$, where no variable from $\mathsf{y}$ are free in $\phi$, and we may assume w.l.o.g.  that $\mathsf{dom}(X)$ consists of all variables from $\mathsf{z}$. Then there are suitable sequence $\mathsf{F}$ of functions for $\exists\mathsf{x}$ such that $M\models_{X(\mathsf{F}/\mathsf{x})(M/\mathsf{y})}\mathsf{z}\mathsf{y}\subseteq \mathsf{z}\mathsf{x}\wedge \phi(\mathsf{x},\mathsf{z})$. We show that $M\models_X\forall \mathsf{x}\phi(\mathsf{x},\mathsf{z})$, or $M\models_{X(M/\mathsf{x})}\phi(\mathsf{x},\mathsf{z})$, which, by locality, is further  reduced to showing that $X(\mathsf{F}/\mathsf{x})=X(M/\mathsf{x})$.
 
 For any $s\in X(M/\mathsf{x})$, consider an arbitrary assignment  $t\in X(\mathsf{F}/\mathsf{x})(M/\mathsf{y})$ satisfying
$t(\mathsf{z})=s(\mathsf{z})$ and $t(\mathsf{y})=s(\mathsf{x})$.
Since $M\models_{X(\mathsf{F}/\mathsf{x})(M/\mathsf{y})}\mathsf{z}\mathsf{y}\subseteq \mathsf{z}\mathsf{x}$, there exists $t'\in X(\mathsf{F}/\mathsf{x})(M/\mathsf{y})$ such that 
\(t'(\mathsf{z}\mathsf{x})=t(\mathsf{z}\mathsf{y})=s(\mathsf{z})s(\mathsf{x}),\)
meaning that $s=t'\upharpoonright \mathsf{dom}(X)\cup \{x_1,\dots,x_n\}\in X(\mathsf{F}/\mathsf{x})$. Thus, $X(M/\mathsf{x})\subseteq X(\mathsf{F}/\mathsf{x})$,  thereby $X(M/\mathsf{x})= X(\mathsf{F}/\mathsf{x})$.
\end{proof}

%\begin{proposition}\label{proof_system_equi_fo}
%Let $\Delta\cup\{\alpha\}$ be a set of first-order formulas. Then
%\[\Delta\vdash\alpha\iff\Delta\vdash_{\FO}\alpha.\]
%\end{proposition}
%\todo{added}

We will prove the completeness theorem of our system in the next section. An important lemma for this proof is that every formula provably implies its normal form (\ref{inc_nf}). To prove this lemma we  first prove a few useful propositions.
The next three propositions concern the standard properties of quantifications as well as the monotonicity of the entailment relation in \Inc. In the sequel, we will often apply Propositions  \ref{der_rules} and \ref{mon_lem_2} without  explicit reference to them. 

%\todo{stress that universal quantifiers are special, many obvious rules have nontrivial derivations}
%Universal quantifier is peculiar for our logic. Some of the usual and natural rules for universal quantifier have nontrivial derivations, as we will show in the next proposition.

\begin{proposition}\label{der_rules}
Let $\alpha$ be a first-order formula, and $x\notin \textsf{Fv}(\psi)$. %\todo{we will not explicitly cite this prop. when apply it}
\begin{enumerate}[label=(\roman*)]
\item $\neg\forall x\alpha\dashv\vdash\exists x\neg\alpha$ and $\neg\exists x\alpha\dashv\vdash \forall x\neg\alpha$.
\item\label{der_rules_c2} $\forall x\phi\wedge\psi\dashv\vdash \forall x(\phi\wedge\psi)$.
\item\label{der_rules_c1} $\exists x\phi\wedge\psi\dashv\vdash \exists x(\phi\wedge\psi)$.
\item $\exists x\phi\vee\psi\dashv\vdash\exists x(\phi\vee\psi)$.
%\item If $x\notin \textsf{Fv}(\psi)$, then $\exists x(\phi\vee\psi)\vdash\exists x\phi\vee\psi$.
\end{enumerate}
\end{proposition}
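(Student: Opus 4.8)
The plan is to give, for each of the four items, explicit derivations in both directions, using only the rules of Table~\ref{tab:con_quant_rule} together with the already-established Proposition~\ref{der_rules2}. The key observation for item (i) is that all four formulas involved are in fact first-order: since $\alpha$ is first-order, so are $\forall x\alpha$ and $\exists x\alpha$, and hence $\neg\forall x\alpha$, $\neg\exists x\alpha$, $\exists x\neg\alpha$, $\forall x\neg\alpha$ are all first-order. Because our system restricted to first-order formulas contains all the rules of classical first-order natural deduction (in particular \negi, \nege and $\mathsf{RAA}$ are available, their first-order side conditions being vacuous here), the two equivalences in (i) are just the standard classical derivations of the quantifier--negation dualities, so I would simply record these and note that no \Inc-specific machinery is needed.

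For the conjunction cases I would argue as follows. In (ii), the direction $\forall x\phi\wedge\psi\vdash\forall x(\phi\wedge\psi)$ uses $x\notin\textsf{Fv}(\psi)$: from the conjunct $\psi$ one obtains $\forall x\psi$ by \uqi (the eigenvariable condition holds precisely because $x$ is not free in $\psi$), and then \conje together with \uqextc yields $\forall x(\phi\wedge\psi)$. The converse follows from the provable equivalence $\forall x(\phi\wedge\psi)\dashv\vdash\forall x\phi\wedge\forall x\psi$ of Proposition~\ref{der_rules2}, after which \conje extracts $\forall x\psi$ and \uqez (applicable since $x\notin\textsf{Fv}(\psi)$) turns it into $\psi$. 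In (iii), the direction $\exists x\phi\wedge\psi\vdash\exists x(\phi\wedge\psi)$ applies \existse to $\exists x\phi$ (obtained by \conje), deriving $\exists x(\phi\wedge\psi)$ inside the minor derivation by \conji (using the undischarged $\psi$) followed by \existsi; here the side condition of \existse is met exactly because $x\notin\textsf{Fv}(\psi)$, where $\psi$ is the only undischarged assumption of the minor derivation. The converse uses \existse twice, once extracting $\phi$ (then \existsi) and once extracting $\psi$ by \conje, and recombines them with \conji.

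The disjunction case (iv) is the most delicate. For $\exists x\phi\vee\psi\vdash\exists x(\phi\vee\psi)$ I would apply \tensore to the disjunction $\exists x\phi\vee\psi$: in the left branch, from the discharged disjunct $\exists x\phi$, an application of \existse followed by \tensori and \existsi produces $\exists x(\phi\vee\psi)$; in the right branch, $\psi\vdash\phi\vee\psi\vdash\exists x(\phi\vee\psi)$ by \tensori and \existsi. Both branches carry no additional undischarged assumptions (the extra-assumption sets $\Delta_0,\Delta_1$ in the soundness formulation of \tensore are empty), so the first-order side condition on \tensore is vacuously satisfied. For the converse $\exists x(\phi\vee\psi)\vdash\exists x\phi\vee\psi$ I would apply \existse to $\exists x(\phi\vee\psi)$ and, inside its minor derivation, apply \tensore to the discharged $\phi\vee\psi$: the $\phi$-branch gives $\exists x\phi$ by \existsi and then $\exists x\phi\vee\psi$ by \tensori, while the $\psi$-branch gives $\exists x\phi\vee\psi$ directly by \tensori. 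The hypothesis $x\notin\textsf{Fv}(\psi)$ is consumed here through the \existse side condition, which requires $x$ to be absent from the conclusion $\exists x\phi\vee\psi$.

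The main obstacle I anticipate is not the logical content, which is routine, but the careful bookkeeping of side conditions: verifying that every use of \tensore leaves only first-order (indeed empty) extra undischarged assumptions, and that each use of \existse and \uqi respects the freeness restrictions, which is exactly where the hypothesis $x\notin\textsf{Fv}(\psi)$ is used. Item (i), by contrast, is essentially immediate once one notices that it lives entirely within the first-order fragment.
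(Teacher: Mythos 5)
Your proof is correct and follows essentially the same route as the paper's: item (i) is dispatched by observing it lives entirely in the first-order fragment, your right-to-left direction of (ii) uses Proposition \ref{der_rules2}\ref{der_rules2_c3} together with \uqez exactly as the paper does, and your explicit derivations of (iii) and (iv) are precisely the ``usual'' first-order derivations (with the \existse and \tensore side conditions duly checked, the latter being vacuous since the branch derivations carry no extra undischarged assumptions) that the paper leaves implicit. The only divergence is in the left-to-right direction of (ii), where you obtain $\forall x\psi$ by \uqi (legitimate, since $x$ is not free in the undischarged assumption $\forall x\phi\wedge\psi$) and then apply \uqextc, whereas the paper reaches the same conclusion in one step with \uqs; both are basic rules of the system, so this is a cosmetic difference rather than a different approach.
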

\begin{proof}
Since our system behaves exactly like first-order logic when restricted to first-order formulas only, item (i) can be proved as usual. 
Items (iii) and (iv) are proved also as usual.
We only derive item (ii). For the right to left direction, we have by Proposition \ref{der_rules2}\ref{der_rules2_c3} that $\forall x(\phi\wedge\psi)\vdash\forall x\phi\wedge\forall x\psi$. Since $x\notin \textsf{Fv}(\psi)$, $\forall x\psi\vdash\psi$ by \uqez. Thus $\forall x(\phi\wedge\psi)\vdash\forall x\phi\wedge\psi$. For the other direction, since $\phi,\psi\vdash\phi\wedge\psi$ and $x\notin \textsf{Fv}(\psi)$, we derive by applying \uqs that $\forall x\phi,\psi\vdash\forall x(\phi\wedge\psi)$, thus $\forall x\phi\wedge\psi\vdash \forall x(\phi\wedge\psi)$.
% of item (i), we have the following derivation:
%%\def\ScoreOverhang{0.5pt}
%%\def\defaultHypSeparation{\hskip .1in}
%\begin{center}
%\AxiomC{$\forall x\phi\wedge\psi$}
%\RightLabel{\conje}\UnaryInfC{$\forall x\phi$}
%\AxiomC{$\forall x\phi\wedge\psi$}
%\RightLabel{\conje}\UnaryInfC{$\psi$}\AxiomC{[$\phi$]}
%%
%%\BinaryInfC{$(\mathsf{x}\subseteq\mathsf{y})\wedge(\mathsf{x}=\mathsf{x})\wedge (\mathsf{y}=\mathsf{y})$}
%\RightLabel{\conji}\BinaryInfC{$\phi\wedge\psi$}
%\RightLabel{\uqs ($\because$ $x\notin \textsf{Fv}(\psi)$)}\BinaryInfC{$\forall x(\phi\wedge\psi)$}\DisplayProof
%\end{center}
%\quad
%\end{center}
%\begin{center}
%\begin{center}
%\AxiomC{$\forall x(\phi\wedge\psi)$}
%\RightLabel{\conje, \uqs}\UnaryInfC{$\forall x\phi$}
%\AxiomC{$\forall x(\phi\wedge\psi)$}
%%
%%\BinaryInfC{$(\mathsf{x}\subseteq\mathsf{y})\wedge(\mathsf{x}=\mathsf{x})\wedge (\mathsf{y}=\mathsf{y})$}
%\RightLabel{\conje, \uqs}\UnaryInfC{$\forall x\psi$}
%\RightLabel{\uqez ($\because$ $x\notin \textsf{Fv}(\psi)$)}\UnaryInfC{$\psi$}
%\RightLabel{\conji}\BinaryInfC{$\forall x\phi\wedge\psi$}\DisplayProof
%\end{center}
\end{proof}

We write $\phi(\theta)$ to indicate that $\phi$ is a formula with an occurrence of $\theta$ as a subformula, and write $\phi[\theta'/\theta]$ for the formula obtained from $\phi$ by replacing the occurrence of $\theta$ by $\theta'$.

\begin{proposition}\label{mon_lem_2}
If $\theta\dashv\vdash\theta'$, then $\phi(\theta)\dashv\vdash\phi[\theta'/\theta]$. Moreover, if the occurrence of $\theta$ in $\phi(\theta)$ is not in the scope of a negation, then $\theta\vdash\theta'$ implies $\phi(\theta)\vdash\phi[\theta'/\theta]$.
%Let $\phi(\theta)$ be a formula in which the occurrence of $\theta$ is not in the scope of a negation. If $\theta\vdash\theta'$, then $\phi(\theta)\vdash\phi[\theta'/\theta]$.
\end{proposition}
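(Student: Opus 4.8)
The plan is to argue by induction on the structure of $\phi$, following the unique path from the root of $\phi$ down to the distinguished occurrence of $\theta$. I would first separate the statement into a \emph{monotone claim} (the ``moreover'' part: $\theta\vdash\theta'$ yields $\phi(\theta)\vdash\phi[\theta'/\theta]$ whenever the occurrence is not in the scope of a negation) and the \emph{equivalence claim} (the first assertion). I would prove the monotone claim first, because the equivalence claim then follows almost for free by applying it in both directions at every point of the path that is negation-free.

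For the monotone claim the base case is $\phi=\theta$, where the desired $\phi(\theta)\vdash\phi[\theta'/\theta]$ is literally the hypothesis $\theta\vdash\theta'$. In the inductive step the outermost connective above the occurrence can only be one of $\wedge,\vee,\exists,\forall$, since by assumption no negation lies on this path. Each case is discharged by the matching introduction/elimination pair: for $\phi=\psi(\theta)\wedge\chi$ I extract $\psi(\theta)$ by \conje, rewrite it to $\psi[\theta'/\theta]$ by the induction hypothesis, and recombine with $\chi$ by \conji; for $\phi=\psi(\theta)\vee\chi$ I feed the premise into \tensore, using \tensori in each of the two sub-derivations; for $\phi=\exists x\psi(\theta)$ I combine \existse with \existsi; and for $\phi=\forall x\psi(\theta)$ I use the substitution rule \uqs (keeping the bound variable as $x$), since the ordinary universal elimination is unsound in \Inc. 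The recurring technical point to verify is that the side conditions of \tensore, \existse, and \uqs are met, and in every application this is automatic: the relevant sub-derivation depends on no open assumption other than the one the rule itself discharges (namely $\psi(\theta)$, $\chi$, or the copy of $\psi(\theta)$), so the ``first-order assumptions only'' and freshness conditions hold vacuously, and the conclusion carries $x$ as a bound variable.

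For the equivalence claim I would split according to whether the occurrence of $\theta$ lies in the scope of a negation. If it does not, then $\theta\dashv\vdash\theta'$ supplies both $\theta\vdash\theta'$ and $\theta'\vdash\theta$, and two applications of the monotone claim give $\phi(\theta)\dashv\vdash\phi[\theta'/\theta]$. If the occurrence does lie under a negation, then, because negation in \Inc is applied only to first-order formulas, the entire negated subformula $\neg\alpha$ containing $\theta$ is first-order, and hence so are $\theta$ and, for $\phi[\theta'/\theta]$ to be well-formed, $\theta'$. Classical first-order replacement, which is available because the system contains all the first-order rules, yields $\neg\alpha(\theta)\dashv\vdash\neg\alpha[\theta'/\theta]$; regarding this outermost negated subformula as the new distinguished subformula, whose occurrence in $\phi$ is now negation-free, I conclude by the negation-free case already handled.

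The step I expect to demand the most care is the universal-quantifier case, precisely because the familiar elimination $\forall x\phi/\phi(t/x)$ fails for \Inc-formulas and must be routed through \uqs; checking that this instance respects the side condition on the (re)naming variable is the crux of that case. The negation case is conceptually routine but relies on the explicit observation that everything below the negation is first-order, so that the general replacement genuinely reduces to the classical one — and this is also exactly why the monotone direction is (correctly) unavailable there.
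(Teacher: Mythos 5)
Your proposal is correct and follows essentially the same route as the paper, whose entire proof reads ``A routine inductive proof. Apply \uqs in the case $\phi=\forall x\psi$'' --- your structural induction, the use of \uqs with the bound variable kept as $x$ in the universal case, and the observation that the side conditions of \tensore, \existse, \uqs hold because the sub-derivations carry no undischarged assumptions beyond the discharged one, are exactly the details the paper leaves implicit. Your explicit treatment of the negation case (reducing to classical replacement of first-order equivalents, since anything under a negation in \Inc is first-order) is a point the paper subsumes under ``routine,'' and it is handled correctly.
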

\begin{proof}
A routine inductive proof. Apply \uqs in the case $\phi=\forall x\psi$. %\todo{This is not sufficient for the next prop, because of negation. There is a way out: be more careful, or make \uqext a invertible rule (still not quite right) and change this prop to replacement: two directions}\todo{still not quite right: \uqext makes a first-order formula more complex (yet still first-order). No, no, the other direction of  \uqext  is derivable: by $\wedge E$ and $\forall E_0$}
\end{proof}

\begin{proposition}\label{qf_nf_syn}
Let $\phi$ be a formula and $\mathsf{Q}\mathsf{x}\theta$ the semantically equivalent formula in prenex normal form as given in Theorem \ref{nf_thm_qf}, where $\mathsf{Qx}=Q^1x_1\cdots Q^nx_n$ ($Q^i\in \{\forall,\exists\}$)  is a sequence of quantifiers and $\theta$ is a quantifier free formula.  Then
$\phi\dashv\vdash\mathsf{Q}\mathsf{x}\theta$.
\end{proposition}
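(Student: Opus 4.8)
The plan is to prove the syntactic equivalence $\phi \dashv\vdash \mathsf{Q}\mathsf{x}\theta$ by induction on the structure of $\phi$, mirroring the semantic argument of Theorem \ref{nf_thm_qf} but replacing each semantic equivalence used there by the corresponding \emph{provable} equivalence established in Proposition \ref{der_rules} and Proposition \ref{der_rules2}. The key tool throughout is Proposition \ref{mon_lem_2}, which lets us replace a subformula by a provably equivalent one inside any larger formula; this is exactly what licenses the inductive step of pushing quantifiers outward through connectives. I would first handle the base cases: if $\phi$ is an inclusion atom, $\bot$, or a first-order literal $\alpha$ or $\neg\alpha$, then $\phi$ is already quantifier-free, so $\mathsf{Q}\mathsf{x}$ is empty and the claim is the trivial $\phi \dashv\vdash \phi$.

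For the inductive cases I would follow the connective structure. For $\phi = \phi_0 \wedge \phi_1$, the induction hypothesis gives $\phi_0 \dashv\vdash \mathsf{Q}^0\mathsf{x}_0\theta_0$ and $\phi_1 \dashv\vdash \mathsf{Q}^1\mathsf{x}_1\theta_1$ with the bound variables chosen disjoint (renaming via Proposition \ref{der_rules2}\ref{der_rules2_c5} where needed so that no variable of one prefix is free in the other formula). Then by Proposition \ref{mon_lem_2} we get $\phi \dashv\vdash \mathsf{Q}^0\mathsf{x}_0\theta_0 \wedge \mathsf{Q}^1\mathsf{x}_1\theta_1$, and repeated application of Proposition \ref{der_rules}\ref{der_rules_c2} and \ref{der_rules_c1} pulls each quantifier out to the front, yielding a single prenex block. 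The case $\phi = \phi_0 \vee \phi_1$ is analogous, using Proposition \ref{der_rules}(iv) to extract existential quantifiers, but the genuinely new work is the universal-over-disjunction step: to extract a $\forall x$ standing in front of a disjunct I would invoke the invertible rule \uqextd, which provably realizes the semantic equivalence $\forall x\phi \vee \psi \equiv \exists y\exists z\forall x\big((\phi\wedge y=z)\vee(\psi\wedge y\neq z)\big)$ of \eqref{nf_thm_qf_eq1}, converting a leading $\forall$ into leading $\exists$'s plus a deeper $\forall$ over a quantifier-free matrix. For the quantifier cases $\phi = \exists x\psi$ and $\phi = \forall x\psi$, the induction hypothesis $\psi \dashv\vdash \mathsf{Q}'\mathsf{x}'\theta'$ combined with Proposition \ref{mon_lem_2} immediately gives $\phi \dashv\vdash Qx\,\mathsf{Q}'\mathsf{x}'\theta'$, which is already prenex.

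The main obstacle I anticipate is bookkeeping around variable capture and the precise form of the prenex prefix produced in the disjunction case. Applying \uqextd repeatedly to clear universal quantifiers past a disjunction introduces fresh witness variables $y,z$ and reorders quantifiers, so I must argue that the resulting prefix is a legitimate sequence $Q^1x_1\cdots Q^nx_n$ and that each application's side condition ($x$ not free in the other disjunct; $y,z$ fresh) can be met, appealing to Proposition \ref{der_rules2}\ref{der_rules2_c5} for the renaming of bound variables whenever a clash would otherwise occur. A secondary subtlety is that Proposition \ref{der_rules}\ref{der_rules_c2}, \ref{der_rules_c1} and (iv) all carry the hypothesis $x \notin \textsf{Fv}(\psi)$, so before extracting a quantifier past a conjunct or disjunct I must ensure the bound variable does not occur free in the rest of the formula; this is again handled by $\alpha$-renaming via \uqez and \uqs. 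Once these variable-management details are discharged, the induction closes and the two directions $\phi \vdash \mathsf{Q}\mathsf{x}\theta$ and $\mathsf{Q}\mathsf{x}\theta \vdash \phi$ follow simultaneously since every equivalence used is provable in both directions.
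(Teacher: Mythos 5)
Your overall strategy---mirroring the semantic proof of Theorem \ref{nf_thm_qf} while replacing each semantic equivalence by its provable counterpart (Proposition \ref{der_rules} for pulling quantifiers past $\wedge$ and $\vee$, the invertible rule \uqextd for the universal-over-disjunction step, Proposition \ref{mon_lem_2} for replacement of provably equivalent subformulas, and Proposition \ref{der_rules2}\ref{der_rules2_c5} for renaming bound variables to meet the freeness side conditions)---is exactly the paper's proof, which is stated in one line as ``repeatedly apply Propositions \ref{der_rules}, \ref{mon_lem_2} and \uqextd (c.f.\ the proof of Theorem \ref{nf_thm_qf}).''

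There is, however, one concrete flaw: your base case is wrong as stated. In the grammar of \Inc, the clauses $\alpha$ and $\neg\alpha$ range over \emph{arbitrary} first-order formulas, not literals; in particular $\alpha$ may itself contain quantifiers, and negation may stand in front of a quantified first-order formula. So for $\phi=\neg\forall x\,\beta$ (with $\beta$ first-order), your induction terminates at a ``base case'' that is not quantifier-free, and the claim that $\mathsf{Q}\mathsf{x}$ is empty and $\phi\dashv\vdash\phi$ suffices fails: the prenex form supplied by Theorem \ref{nf_thm_qf} is (up to further prenexing inside $\beta$) $\exists x\,\neg\beta$, not $\neg\forall x\,\beta$ itself. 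This is precisely where item (i) of Proposition \ref{der_rules}---$\neg\forall x\alpha\dashv\vdash\exists x\neg\alpha$ and $\neg\exists x\alpha\dashv\vdash\forall x\neg\alpha$---enters the paper's argument, and it is the one ingredient your proposal never invokes. The repair is routine: in the cases $\phi=\alpha$ and $\phi=\neg\alpha$, carry out the classical prenex transformation, which is derivable because the system contains all rules of first-order logic when restricted to first-order formulas (equivalently, iterate Proposition \ref{der_rules}(i) together with Proposition \ref{mon_lem_2}). With that emendation your induction closes and coincides with the paper's proof.
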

\begin{proof}
Repeatedly apply Propositions \ref{der_rules}, \ref{mon_lem_2} and \uqextd (c.f. the proof of Theorem \ref{nf_thm_qf}). %\todo{simplify \uqext if possible!}
\end{proof}

The next technical proposition shows, as a generalization of the rule \inci,  that  universal quantifiers in a more general context can also be simulated using existential quantifiers and inclusion atoms.

% concerns simulating universal quantifiers using existential quantifiers and inclusion atoms.

%Next, we derive some clauses involving inclusion atoms, some of which are crucial technical lemmas for the proof of the completeness theorem that we will give in the next section.

\begin{proposition}\label{inc_atm_prop_c4} 
$\forall \mathsf{x}\mathsf{Qu}\phi(\mathsf{u},\mathsf{x},\mathsf{z})\dashv\vdash \exists \mathsf{x}\mathsf{Qu}\forall \mathsf{y}\big(\mathsf{z}\mathsf{y}\subseteq \mathsf{z}\mathsf{x}\wedge \phi(\mathsf{u},\mathsf{x},\mathsf{z})\big)$.%, where $\mathsf{y}$ is a sequence of fresh variables.%, where $\mathsf{Qx}=Q^1x_1\cdots Q^nx_n$ (each $Q^i\in\{\forall,\exists\}$) is any sequence   of quantifiers.
\end{proposition}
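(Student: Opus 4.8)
The plan is to prove the equivalence by induction on the number of quantifiers in the block $\mathsf{Qu}$, using \inci as the base case and reducing each inductive step to the scope‑manipulation rules that are already available. When $\mathsf{Qu}$ is empty the claim is literally the invertible rule \inci. For the inductive step I would peel off the innermost quantifier, writing $\mathsf{Qu}=\mathsf{Q}'\mathsf{u}'Qu_k$ with $Q\in\{\exists,\forall\}$ and $u_k$ a single variable, and apply the induction hypothesis for the shorter prefix $\mathsf{Q}'\mathsf{u}'$ to the matrix $Qu_k\phi$. This gives
\[
\forall\mathsf{x}\mathsf{Qu}\phi=\forall\mathsf{x}\mathsf{Q}'\mathsf{u}'\big(Qu_k\phi\big)\dashv\vdash\exists\mathsf{x}\mathsf{Q}'\mathsf{u}'\forall\mathsf{y}\big(\mathsf{z}\mathsf{y}\subseteq\mathsf{z}\mathsf{x}\wedge Qu_k\phi\big),
\]
so it only remains to move $Qu_k$ outward past $\forall\mathsf{y}$, i.e.\ to show, underneath the prefix $\exists\mathsf{x}\mathsf{Q}'\mathsf{u}'$ (via Proposition~\ref{mon_lem_2}, which propagates provable equivalences through contexts), that
\[
\forall\mathsf{y}\big(\mathsf{z}\mathsf{y}\subseteq\mathsf{z}\mathsf{x}\wedge Qu_k\phi\big)\dashv\vdash Qu_k\forall\mathsf{y}\big(\mathsf{z}\mathsf{y}\subseteq\mathsf{z}\mathsf{x}\wedge\phi\big).
\]

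The hard part is precisely this last equivalence in the case $Q=\exists$: read naively it is an interchange $\forall\mathsf{y}\exists u_k\rightsquigarrow\exists u_k\forall\mathsf{y}$, which is unsound for the inclusion‑logic quantifiers in general. The observation that rescues it is that $\mathsf{y}$ is a block of fresh variables, so $\mathsf{y}\cap\mathsf{Fv}(Qu_k\phi)=\emptyset$ and $\mathsf{y}\cap\mathsf{Fv}(\phi)=\emptyset$; hence $\forall\mathsf{y}$ is \emph{vacuous} on the $\phi$‑part and acts genuinely only on the guard $\mathsf{z}\mathsf{y}\subseteq\mathsf{z}\mathsf{x}$. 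Concretely I would show that \emph{both} sides are provably equivalent to the single formula $C:=\forall\mathsf{y}(\mathsf{z}\mathsf{y}\subseteq\mathsf{z}\mathsf{x})\wedge Qu_k\phi$. For the left‑hand side: distribute $\forall\mathsf{y}$ over the conjunction by Proposition~\ref{der_rules2}\ref{der_rules2_c3} (applied one variable at a time for the block $\mathsf{y}$), and then discard the vacuous quantifier on the $\mathsf{y}$‑free conjunct $Qu_k\phi$ using \uqez and \uqi; this lands on $C$. For the right‑hand side: first rewrite $\forall\mathsf{y}(\mathsf{z}\mathsf{y}\subseteq\mathsf{z}\mathsf{x}\wedge\phi)$ as $\forall\mathsf{y}(\mathsf{z}\mathsf{y}\subseteq\mathsf{z}\mathsf{x})\wedge\phi$ inside the scope of $Qu_k$ (again by Proposition~\ref{der_rules2}\ref{der_rules2_c3} together with the vacuity of $\forall\mathsf{y}$ on $\phi$, propagated by Proposition~\ref{mon_lem_2}), and then pull $Qu_k$ out of the conjunction by Proposition~\ref{der_rules}\ref{der_rules_c1} when $Q=\exists$ (respectively Proposition~\ref{der_rules}\ref{der_rules_c2} when $Q=\forall$), which is legitimate since $u_k\notin\mathsf{Fv}\big(\forall\mathsf{y}(\mathsf{z}\mathsf{y}\subseteq\mathsf{z}\mathsf{x})\big)$. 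Both reductions reach $C$, so the required equivalence holds uniformly for $Q=\exists$ and $Q=\forall$, and the potentially illegal exchange is never actually performed.

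Chaining these steps, the inductive step turns $\forall\mathsf{x}\mathsf{Qu}\phi$ into $\exists\mathsf{x}\mathsf{Q}'\mathsf{u}'Qu_k\forall\mathsf{y}(\mathsf{z}\mathsf{y}\subseteq\mathsf{z}\mathsf{x}\wedge\phi)=\exists\mathsf{x}\mathsf{Qu}\forall\mathsf{y}(\mathsf{z}\mathsf{y}\subseteq\mathsf{z}\mathsf{x}\wedge\phi)$, completing the induction. Since every link in the chain is a provable equivalence (\inci is invertible, the cited propositions are stated as $\dashv\vdash$, and Proposition~\ref{mon_lem_2} preserves $\dashv\vdash$ under contexts), both directions $\vdash$ and $\dashv$ are obtained simultaneously. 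I expect the only delicate points to be the bookkeeping of the freshness and side conditions on $\mathsf{y}$ and $u_k$ when invoking the scope‑extension and vacuous‑quantifier rules, and the routine expansion of the block quantifiers $\forall\mathsf{y}$ and $\forall\mathsf{x}$ into single‑variable applications where the cited propositions are stated for a single bound variable.
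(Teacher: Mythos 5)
Your proposal is correct and is essentially the paper's own proof: the paper likewise applies \inci with the whole block $\mathsf{Qu}\phi(\mathsf{u},\mathsf{x},\mathsf{z})$ as matrix (which is exactly what your recursion bottoms out at) and then shuttles the guard $\forall\mathsf{y}(\mathsf{z}\mathsf{y}\subseteq\mathsf{z}\mathsf{x})$ out of and back into the conjunction via Propositions \ref{der_rules}\ref{der_rules_c2}\ref{der_rules_c1} and \ref{mon_lem_2}, i.e.\ your detour through the formula $C$, performed once for the entire block $\mathsf{Qu}$ instead of once per quantifier. Your explicit induction on the length of $\mathsf{Qu}$ (and the per-quantifier separate/recombine moves) just makes explicit the block-wise iteration that the paper's untagged middle steps leave implicit, so the two arguments use the same key ingredients and differ only in bookkeeping.
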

\begin{proof}
We derive the proposition as follows:
\begin{align*}
\forall \mathsf{x}\mathsf{Qu}\phi(\mathsf{u},\mathsf{x},\mathsf{z})&\dashv\vdash\exists \mathsf{x}\forall \mathsf{y}\big(\mathsf{z}\mathsf{y}\subseteq \mathsf{z}\mathsf{x}\wedge \mathsf{Qu}\phi(\mathsf{u},\mathsf{x},\mathsf{z})\big)\tag{\inci}\\
&\dashv\vdash \exists \mathsf{x}\big(\forall \mathsf{y}(\mathsf{z}\mathsf{y}\subseteq \mathsf{z}\mathsf{x})\wedge \mathsf{Qu}\phi(\mathsf{u},\mathsf{x},\mathsf{z})\big)\\
&\dashv\vdash \exists \mathsf{x} \mathsf{Qu}\big(\forall \mathsf{y}(\mathsf{z}\mathsf{y}\subseteq \mathsf{z}\mathsf{x})\wedge\phi(\mathsf{u},\mathsf{x},\mathsf{z})\big)\\
&\dashv\vdash \exists \mathsf{x} \mathsf{Qu}\forall \mathsf{y}\big(\mathsf{z}\mathsf{y}\subseteq \mathsf{z}\mathsf{x}\wedge\phi(\mathsf{u},\mathsf{x},\mathsf{z})\big).
\end{align*}
%and 
%\begin{align*}
%\exists x \mathsf{Qu}\forall y\big(\mathsf{z}y\subseteq \mathsf{z}x\wedge\phi(\mathsf{u},x,\mathsf{z})\big)&\vdash \exists x \mathsf{Qu}\big(\forall y(\mathsf{z}y\subseteq \mathsf{z}x)\wedge\phi(\mathsf{u},x,\mathsf{z})\big)\\
%&\vdash \exists x\big(\forall y(\mathsf{z}y\subseteq \mathsf{z}x)\wedge \mathsf{Qu}\phi(\mathsf{u},x,\mathsf{z})\big)\\
%&\vdash\exists x\forall y\big(\mathsf{z}y\subseteq \mathsf{z}x\wedge \mathsf{Qu}\phi(\mathsf{u},x,\mathsf{z})\big)\\
%&\vdash \forall x\mathsf{Qu}\phi(\mathsf{u},x,\mathsf{z}).\tag{\ince}
%\end{align*}
\end{proof}

\begin{lemma}\label{nf_der}
For any \Inc-formula $\phi$, we have that $\phi\vdash\phi'$, where $\phi'$ is the semantically equivalent formula in normal form (\ref{inc_nf_cp}) as given in Theorem \ref{inc_nf_thm}.
\end{lemma}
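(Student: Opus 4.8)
The plan is to mirror, at the level of derivations, the three semantic reduction steps that produce the normal form in Theorem~\ref{inc_nf_thm}, replacing each semantic equivalence used there by the corresponding \emph{provable} implication, and chaining them by the monotonicity principle of Proposition~\ref{mon_lem_2}. The derivation splits into three stages: pass to prenex form, normalise the quantifier-free matrix, and finally trade the universal quantifiers for existentials guarded by inclusion atoms.

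For the first stage there is nothing to do: Proposition~\ref{qf_nf_syn} already gives $\phi\dashv\vdash\mathsf{Q}\mathsf{x}\theta$, where $\mathsf{Q}\mathsf{x}=Q^1x_1\cdots Q^nx_n$ and $\theta$ is quantifier-free, so in particular $\phi\vdash\mathsf{Q}\mathsf{x}\theta$. For the second stage I would prove the derivational counterpart of Lemma~\ref{qf_nf_thm}, namely $\theta\vdash\widehat\theta$ with $\widehat\theta:=\exists\mathsf{w}\big(\bigwedge_{i\in I}\mathsf{u}_i\subseteq\mathsf{v}_i\wedge\alpha\big)$ of the form \eqref{qf_nf}, by induction on $\theta$ following that lemma. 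The first-order cases are immediate; for an atom $\mathsf{x}\subseteq\mathsf{y}$ one derives $\exists\mathsf{w}\mathsf{u}(\mathsf{w}\subseteq\mathsf{u}\wedge\mathsf{w}=\mathsf{x}\wedge\mathsf{u}=\mathsf{y})$ using \eqi, \conji and \existsi; the conjunction case merges the two existential prefixes via Proposition~\ref{der_rules}\ref{der_rules_c1}; and the disjunction case is exactly the $\vdash$-direction supplied by the rule \incdstr. In the disjunction case I would first rewrite $\theta_0\vee\theta_1$ to $\widehat{\theta_0}\vee\theta_1$ (inductive hypothesis, lifted by Proposition~\ref{mon_lem_2} since the disjuncts occur positively), apply \incdstr, then rewrite the remaining occurrence of $\theta_1$ and apply \incdstr a second time, reproducing verbatim the chain \eqref{soundness_incdstr} from Lemma~\ref{qf_nf_thm}. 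Since the displayed occurrence of $\theta$ in $\mathsf{Q}\mathsf{x}\theta$ is not under a negation, Proposition~\ref{mon_lem_2} then gives $\mathsf{Q}\mathsf{x}\theta\vdash\mathsf{Q}\mathsf{x}\widehat\theta$.

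In the third stage I process the universal quantifiers of $\mathsf{Q}\mathsf{x}$ from the outside in. For each $j\in J$ (the positions with $Q^j=\forall$) I convert $\forall x_j$ by the single-variable instance of Proposition~\ref{inc_atm_prop_c4} (equivalently \eqref{nf_thm_uni_eq1}), applied to the subformula beginning with $\forall x_j$ through Proposition~\ref{mon_lem_2}; the free context of that subformula is $\mathsf{z}x_1\cdots x_{j-1}$, so the inclusion atom produced is precisely $\mathsf{z}x_1\cdots x_{j-1}y_j\subseteq\mathsf{z}x_1\cdots x_{j-1}x_j$, and the fresh $\forall y_j$ is placed innermost. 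After all of $J$ is processed, every $x_i$ and every variable of $\mathsf{w}$ is existentially quantified and in front, while the dummies $\forall y_{j}$ stand innermost over a matrix $\bigwedge_{j\in J}\mathrm{atom}_j(y_j)\wedge\bigwedge_{i\in I}\mathsf{u}_i\subseteq\mathsf{v}_i\wedge\alpha$, in which $\mathrm{atom}_j$ is the only conjunct containing $y_j$. I then merge the dummies into a single $\forall y$: distribute the universal block over the conjunction (Proposition~\ref{der_rules2}\ref{der_rules2_c3}), drop from each conjunct every universal it does not bind (\uqez), rename each $\forall y_j\,\mathrm{atom}_j(y_j)$ to $\forall y\,\mathrm{atom}_j(y)$ (Proposition~\ref{der_rules2}\ref{der_rules2_c5}), and recombine with \uqextc together with Proposition~\ref{der_rules}\ref{der_rules_c2}. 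Reordering the (commuting) existential quantifiers into the block $\exists\mathsf{w}\exists\mathsf{x}$ then yields exactly the normal form \eqref{inc_nf_cp}.

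The step demanding the most care is the disjunction case of the second stage: one must reproduce the two-fold application of \incdstr faithfully, tracking the auxiliary variables $p,q,p',q'$ and ensuring at each application that one disjunct is presented in the exact shape $\exists\mathsf{x}\big(\bigwedge_{i}\rho^i_{\mathsf{x}}\subseteq\sigma^i_{\mathsf{x}}\wedge\alpha\big)$ required by the rule. By contrast, although the merging of the universal dummies looks delicate a priori, it is in fact routine once one notices that each inclusion atom depends on exactly one dummy, so that distributing the universals over the conjunction reduces the merging to plain renaming.
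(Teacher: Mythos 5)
Your proposal follows essentially the same three-stage argument as the paper's own proof: prenex form via Proposition~\ref{qf_nf_syn}, the derivational counterpart $\theta\vdash\exists\mathsf{w}\theta'$ of Lemma~\ref{qf_nf_thm} proved by induction with \incdstr carrying the disjunction case, and then conversion of the universal quantifiers via Proposition~\ref{inc_atm_prop_c4} followed by merging the dummy universals into a single $\forall y$ using Propositions~\ref{der_rules}\ref{der_rules_c2}, \ref{der_rules2}\ref{der_rules2_c5} and \ref{der_rules2}\ref{der_rules2_c3}. The only cosmetic difference is that you make explicit the outside-in, one-quantifier-at-a-time iteration of Proposition~\ref{inc_atm_prop_c4} (which is how the paper's single appeal to that proposition must be unwound anyway), so the proof is correct and matches the paper's.
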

\begin{proof}
We follow a similar argument to that of the semantic proof of Theorem \ref{inc_nf_thm}. First, by Proposition \ref{qf_nf_syn}, we obtain $\phi\vdash\mathsf{Qx}\theta$,
 where $\mathsf{Qx}\theta$ is the semantically equivalent formula of $\phi$ as given in Theorem \ref{nf_thm_qf}  with $\mathsf{Qx}=Q^1x_1\cdots Q^nx_n$ ($Q^i\in \{\forall,\exists\}$) a sequence of quantifiers and $\theta$ a quantifier free formula. 

%\[Q^1x_1\dots Q^nx_n\theta,\]
%where $Q^i\in \{\exists, \forall\}$ and $\theta$ is a quantifier free formula.

If we can show that  $\theta\vdash\exists \mathsf{w}\theta'$ for some formula 
\(\theta'=\bigwedge_{i\in I}\mathsf{u_i}\subseteq \mathsf{v_i}\,\wedge\alpha(\mathsf{w},\mathsf{x},\mathsf{z})\)
 as given in Lemma \ref{qf_nf_thm},  we may obtain $\phi\vdash \mathsf{Qx}\exists\mathsf{w}\theta'$ by Proposition \ref{mon_lem_2}.
%\[\phi\vdash Q^1x_1\dots Q^nx_n\exists \mathsf{w}\Big(\bigwedge_{i\in I}(\mathsf{u_i}\subseteq \mathsf{v_i})\wedge\alpha(\mathsf{w},\mathsf{x},\mathsf{z})\Big).\]
Next, we derive
%where $\mathsf{y}=\langle y_j\mid Q^j=\forall,~1\leq j\leq n\rangle$. Next, since each $y_j$ occurs only once in one inclusion atom,  we obtain that
\begin{align*}
\mathsf{Qx}\exists\mathsf{w}\theta'&\vdash\exists \mathsf{x}\exists \mathsf{w}\forall \mathsf{y}\Big(\mathop{\bigwedge_{1\leq j\leq n}}_{Q^j=\forall}\mathsf{z}x_1\dots x_{j-1}y_j\subseteq \mathsf{z}x_1\dots x_{j-1}x_j\,\wedge\theta'(\mathsf{w},\mathsf{x},\mathsf{z})\Big)\tag{Proposition \ref{inc_atm_prop_c4}}\\
&\quad\quad\text{where }\mathsf{y}=\langle y_j\mid Q^j=\forall,~1\leq j\leq n\rangle\\
&\vdash\exists \mathsf{w}\exists \mathsf{x}\Big(\mathop{\bigwedge_{1\leq j\leq n}}_{Q^j=\forall}\forall y_j(\mathsf{z}x_1\dots x_{j-1}y_j\subseteq \mathsf{z}x_1\dots x_{j-1}x_j)\,\wedge\theta'(\mathsf{w},\mathsf{x},\mathsf{z})\Big)\tag{Proposition \ref{der_rules}\ref{der_rules_c2}}\\
&\vdash\exists \mathsf{w}\exists \mathsf{x}\Big(\mathop{\bigwedge_{1\leq j\leq n}}_{Q^j=\forall}\forall y(\mathsf{z}x_1\dots x_{j-1}y\subseteq \mathsf{z}x_1\dots x_{j-1}x_j)\,\wedge\theta'(\mathsf{w},\mathsf{x},\mathsf{z})\Big)\tag{Proposition   \ref{der_rules2}\ref{der_rules2_c5}}\\
&\vdash\exists \mathsf{w}\exists \mathsf{x}\forall y\Big(\mathop{\bigwedge_{1\leq j\leq n}}_{Q^j=\forall}\mathsf{z}x_1\dots x_{j-1}y\subseteq \mathsf{z}x_1\dots x_{j-1}x_j\,\wedge\theta'(\mathsf{w},\mathsf{x},\mathsf{z})\Big).\tag{Proposition \ref{der_rules2}\ref{der_rules2_c3}}
\end{align*}
Putting all these together, we will complete the proof.

Now, we show that $\theta\vdash\exists \mathsf{w}\theta'$  by induction on  $\theta$.
The case $\theta$ is a  first-order formula (including the case $\theta=\neg\alpha$) is trivial. If $\theta=\mathsf{x}\subseteq \mathsf{y}$, we have that
\(\mathsf{x}\subseteq \mathsf{y}\vdash \exists \mathsf{w}\mathsf{u}\big(\mathsf{w}\subseteq \mathsf{u}\,\wedge \,\mathsf{w}= \mathsf{x}\,\wedge \,\mathsf{u}= \mathsf{y}\big).\) 
Indeed, we first derive that $\vdash\mathsf{x}=\mathsf{x}\wedge\mathsf{y}=\mathsf{y}\vdash\exists \mathsf{w}\exists \mathsf{u}(\mathsf{w}=\mathsf{x}\wedge \mathsf{u}=\mathsf{y})$ by \eqi and \existsi. Then, by \eqsub we derive that 
\(\mathsf{x}\subseteq \mathsf{y}\vdash\exists \mathsf{w}\mathsf{u}\big(\mathsf{x}\subseteq \mathsf{y}\,\wedge\,\mathsf{w}= \mathsf{x}\,\wedge \,\mathsf{u}= \mathsf{y}\big)\vdash\exists \mathsf{w}\mathsf{u}\big(\mathsf{w}\subseteq \mathsf{u}\,\wedge \,\mathsf{w}= \mathsf{x}\,\wedge \,\mathsf{u}= \mathsf{y}\big).\)
%\begin{align*}
%\mathsf{x}\subseteq \mathsf{y}\,&\vdash\exists \mathsf{w}\mathsf{u}\big(\mathsf{x}\subseteq \mathsf{y}\,\wedge\,\mathsf{w}= \mathsf{x}\,\wedge \,\mathsf{u}= \mathsf{y}\big)\\
%&\vdash\exists \mathsf{w}\mathsf{u}\big(\mathsf{w}\subseteq \mathsf{u}\,\wedge \,\mathsf{w}= \mathsf{x}\,\wedge \,\mathsf{u}= \mathsf{y}\big).\tag{\eqsub}
%\end{align*}

Assume that 
\(\theta_0\vdash\exists \mathsf{w_0}(\iota_0(\mathsf{w}_0)\wedge\alpha_0(\mathsf{w}_0,\mathsf{x}))\text{ and }\theta_1\vdash\exists \mathsf{w_1}(\iota_1(\mathsf{w}_1)\wedge\alpha_1(\mathsf{w}_1,\mathsf{y})),\)
where $\alpha_0,\alpha_1$ are first-order and quantifier-free, the sequences
$\mathsf{w}_0$ and $\mathsf{w}_1$ do not have variables in common, and $\iota_0$ and $\iota_1$ are as in (\ref{qf_nf_thm_eq1}) in the proof of Lemma \ref{qf_nf_thm}.
%\[\delta(\mathsf{w}_0)=\bigwedge_{i\in I}\mathsf{u}_i\subseteq \mathsf{v}_i~~\text{ and }~~\eta(\mathsf{w}_1)=\bigwedge_{j\in J}\mathsf{u}_j\subseteq \mathsf{v}_j.\]

If $\theta=\theta_0\wedge\theta_1$, then we derive that
\(\theta_0\wedge\theta_1\vdash\exists \mathsf{w_0}(\iota_0\wedge\alpha_0)\wedge\exists \mathsf{w_1}(\iota_1\wedge\alpha_1)\vdash \exists \mathsf{w_0}\exists \mathsf{w_1}(\iota_0\wedge\iota_1\wedge\alpha_0\wedge\alpha_1)\)  by Proposition \ref{der_rules}\ref{der_rules_c1}.

If $\theta=\theta_0\vee\theta_1$,  let $\psi$ be the formula (\ref{qf_nf_thm_eq2}) as in the proof of the disjunction case of Lemma \ref{qf_nf_thm}. We  derive $\theta\vdash\psi$ by following the semantic argument as in Lemma \ref{qf_nf_thm}, in which we apply the rule \incdstr  in the crucial steps.%\todo{combined with $\exists$ extension over $\vee$}
\end{proof}

We end this section by proving some facts concerning the weak classical negation \cn in the context of \Inc. This connective was introduced in \cite{Yang_neg18}, and a trick using \cn was developed in the paper to generalize the proof of the completeness theorem of dependence logic given in \cite{Axiom_fo_d_KV}. We will also apply this  trick  to prove the completeness theorem for our system  in the next section. Recall that the team semantics of \cn is defined as
\begin{itemize}
\item $M\models_X\cn\phi$ \ \ iff \ \ $X=\emptyset$ or $M\not\models_X\phi$. 
\end{itemize}
The weak classical negations $\cn\phi$ of \Inc-formulas $\phi$ are not in general expressible in \Inc (because positive greatest fixed point logic, being expressively equivalent to \Inc, is not closed under classical negation).
%existential second-order logic, being more expressive than \Inc, is not closed under classical negation)\todo{not accurate}. 
Nevertheless, the weak classical negations $\cn\alpha$ of first-order formulas $\alpha$ are expressible (uniformly) in \Inc,:

 %as the next show (or see Proposition 4.4 in \cite{Yang_neg18}) that
\begin{fact}\label{cnalpha_df}
If $\alpha(\mathsf{x})$ is a first-order formula, then $\cn\alpha(\mathsf{x})\equiv\exists \mathsf{y}(\mathsf{y}\subseteq\mathsf{x}\wedge\neg\alpha(\mathsf{y}/\mathsf{x}))$, where $\mathsf{y}$ is a sequence of fresh variables.
\end{fact}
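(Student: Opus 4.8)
The plan is to verify the claimed equivalence directly from the team semantics, using the Flatness of First-order Formulas together with the empty-team property. Writing $\psi$ for the right-hand side $\exists\mathsf{y}(\mathsf{y}\subseteq\mathsf{x}\wedge\neg\alpha(\mathsf{y}/\mathsf{x}))$, I first unfold both satisfaction conditions. By definition $M\models_X\cn\alpha$ iff $X=\emptyset$ or $M\not\models_X\alpha$, and by flatness the latter means there is some $s_0\in X$ with $M\not\models_{s_0}\alpha$. On the other side, $M\models_X\psi$ iff there is a suitable sequence $\mathsf{F}$ of functions such that, writing $Y=X(\mathsf{F}/\mathsf{y})$, both $M\models_Y\mathsf{y}\subseteq\mathsf{x}$ and $M\models_Y\neg\alpha(\mathsf{y}/\mathsf{x})$ hold. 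The empty team satisfies every \Inc-formula, so both sides hold trivially when $X=\emptyset$; throughout I then assume $X\neq\emptyset$.

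For the direction $\cn\alpha\models\psi$, I fix a witness $s_0\in X$ with $M\not\models_{s_0}\alpha$ and choose $\mathsf{F}$ to be the constant assignment sending every $s\in X$ to the single value $s_0(\mathsf{x})$, i.e.\ each $F_i(s)=\{s_0(x_i)\}$. Then every $t\in Y$ satisfies $t(\mathsf{y})=s_0(\mathsf{x})$. Since the free variables of $\alpha(\mathsf{y}/\mathsf{x})$ lie among $\mathsf{y}$, the truth value of $\alpha(\mathsf{y}/\mathsf{x})$ at $t$ equals that of $\alpha$ at $s_0$, which is false; hence $M\models_Y\neg\alpha(\mathsf{y}/\mathsf{x})$. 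For the inclusion atom, the extension of $s_0$ lying in $Y$ serves as the required witness $t'$ with $t'(\mathsf{x})=s_0(\mathsf{x})=t(\mathsf{y})$ for each $t\in Y$, giving $M\models_Y\mathsf{y}\subseteq\mathsf{x}$.

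For the converse $\psi\models\cn\alpha$, I take $\mathsf{F}$ and $Y=X(\mathsf{F}/\mathsf{y})$ as witnessing $M\models_X\psi$; since $X\neq\emptyset$ and the witnessing functions are nonempty-valued, $Y\neq\emptyset$. Picking any $t\in Y$, the conjunct $M\models_Y\neg\alpha(\mathsf{y}/\mathsf{x})$ gives $M\not\models_t\alpha(\mathsf{y}/\mathsf{x})$, while $M\models_Y\mathsf{y}\subseteq\mathsf{x}$ produces $t'\in Y$ with $t'(\mathsf{x})=t(\mathsf{y})$. Comparing the two evaluations through this equality of values on $\mathsf{x}$ versus $\mathsf{y}$ yields $M\not\models_{t'}\alpha$, and restricting $t'$ to $\mathsf{dom}(X)$ (legitimate because the fresh variables $\mathsf{y}$ are not free in $\alpha$) gives an $s\in X$ with $M\not\models_s\alpha$. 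By flatness this is exactly $M\not\models_X\alpha$, so $M\models_X\cn\alpha$.

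The routine bookkeeping is small; the one step that carries the real content is the use of the inclusion atom $\mathsf{y}\subseteq\mathsf{x}$ in the converse direction, which is what converts a value of the auxiliary tuple $\mathsf{y}$ that falsifies $\alpha$ into a genuine assignment of $X$ taking that value on $\mathsf{x}$. Keeping the substitution and free-variable alignment between $\alpha(\mathsf{y}/\mathsf{x})$ evaluated at $t$ and $\alpha$ evaluated at $t'$ straight is the main place where care is needed, but it reduces to the observation that both truth values depend only on the common tuple $t(\mathsf{y})=t'(\mathsf{x})$.
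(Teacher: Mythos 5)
Your proof is correct and takes essentially the same route as the paper's: the paper's one-line argument uses flatness to reduce $M\not\models_X\alpha$ (for nonempty $X$) to the existence of a single falsifying assignment $s\in X$, and then asserts the equivalence with $M\models_X\exists\mathsf{y}(\mathsf{y}\subseteq\mathsf{x}\wedge\neg\alpha(\mathsf{y}/\mathsf{x}))$, which is precisely the step you spell out via the constant witness functions $F_i(s)=\{s_0(x_i)\}$ in one direction and the inclusion-atom witness $t'$ in the other. You simply make explicit the bookkeeping (the empty-team case and the substitution/free-variable alignment) that the paper leaves to the reader.
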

\begin{proof}
Since $\alpha$ is flat, for any nonempty team $X$, $M\not\models_X\alpha(\mathsf{x})$, iff $M\models_s\neg\alpha(\mathsf{x})$ for some $s\in X$, iff $M\models_X\exists \mathsf{y}(\mathsf{y}\subseteq\mathsf{x}\wedge\neg\alpha(\mathsf{y}/\mathsf{x}))$.
\end{proof}
Stipulating the string $\cn\alpha(\mathsf{x})$ as a shorthand for the formula $\exists \mathsf{y}(\mathsf{y}\subseteq\mathsf{x}\wedge\neg\alpha(\mathsf{y}))$ of \Inc, we show next  that the {\em reductio ad absurdum} ($\mathsf{RAA}$) rule for \cn with respect to first-order formulas $\alpha$, i.e., the rule %\todo{It was not an effective rule, but now effective}
\[\AxiomC{[$\cn\alpha$]}\noLine\UnaryInfC{$\vdots$}\noLine\UnaryInfC{$\bot$}\RightLabel{$\mathsf{RAA}_{\cn}$}\UnaryInfC{$\alpha$} \DisplayProof\]
 is derivable in our system from the rule $\subseteq\!\mathsf{Exp}$.

\begin{lemma}\label{neg_trans}
If $\Gamma,\cn\alpha\vdash\bot$,  then $\Gamma\vdash\alpha$. %\todo{true only when $\Gamma$ is a set of sentences}\todo{Not right!!} \todo{can add: \AxiomC{}\noLine\UnaryInfC{}\noLine\UnaryInfC{}\noLine\UnaryInfC{[$\mathsf{y}\subseteq\mathsf{x}$]}\AxiomC{[$\neg\alpha(\mathsf{y})$]}\noLine\BinaryInfC{$D$}\noLine\UnaryInfC{$\bot$}\RightLabel{$\incc^\neg$}\UnaryInfC{$\alpha(\mathsf{x}/\mathsf{y})$}\DisplayProof where $y$ is not free in any undischarged assumptions in $D$.}
\end{lemma}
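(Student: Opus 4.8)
The plan is to derive the admissible rule $\mathsf{RAA}_{\cn}$ directly from the inclusion expansion rule $\subseteq\!\mathsf{Exp}$, reading $\cn\alpha$ as its defining abbreviation $\exists\mathsf{y}(\mathsf{y}\subseteq\mathsf{x}\wedge\neg\alpha(\mathsf{y}))$, where $\mathsf{y}$ is the designated fresh sequence and the free variables of $\alpha$ are among $\mathsf{x}$. The key observation is that the premise of $\subseteq\!\mathsf{Exp}$ is exactly a derivation of $\bot$ from the two \emph{unpacked} assumptions $\mathsf{y}\subseteq\mathsf{x}$ and $\neg\alpha(\mathsf{y})$, whereas the hypothesis $\Gamma,\cn\alpha\vdash\bot$ supplies a derivation of $\bot$ from the \emph{packed} assumption $\cn\alpha$. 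Bridging the two is merely a matter of reassembling $\cn\alpha$ from its components.

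Concretely, I would first take $\mathsf{y}\subseteq\mathsf{x}$ and $\neg\alpha(\mathsf{y})$ as fresh open assumptions, combine them by \conji into $\mathsf{y}\subseteq\mathsf{x}\wedge\neg\alpha(\mathsf{y})$, and apply \existsi to obtain $\exists\mathsf{y}(\mathsf{y}\subseteq\mathsf{x}\wedge\neg\alpha(\mathsf{y}))$, i.e.\ $\cn\alpha$. Grafting the given derivation witnessing $\Gamma,\cn\alpha\vdash\bot$ onto this conclusion yields a derivation $D$ of $\bot$ whose undischarged assumptions are precisely $\Gamma$ together with $\mathsf{y}\subseteq\mathsf{x}$ and $\neg\alpha(\mathsf{y})$. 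This $D$ has exactly the shape required in the premise of $\subseteq\!\mathsf{Exp}$: instantiating the rule's parameter $\mathsf{z}$ to $\mathsf{x}$, the discharged negation is $\alpha(\mathsf{y}/\mathsf{z})=\alpha(\mathsf{y})$ and the conclusion is $\alpha(\mathsf{x}/\mathsf{z})=\alpha$. A single application of $\subseteq\!\mathsf{Exp}$ then discharges $\mathsf{y}\subseteq\mathsf{x}$ and $\neg\alpha(\mathsf{y})$ and concludes $\alpha$, leaving only $\Gamma$ open; that is $\Gamma\vdash\alpha$.

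The one point needing care is the side condition of $\subseteq\!\mathsf{Exp}$, which demands that the free variables of $\alpha(\mathsf{y})$ lie among $\mathsf{y}$ and that no variable of $\mathsf{y}$ occurs free in any undischarged assumption of $D$, i.e.\ in any formula of $\Gamma$. Both are immediate from the definition of the abbreviation $\cn\alpha$: the sequence $\mathsf{y}$ is chosen fresh precisely so as to avoid the free variables of $\alpha$ and of $\Gamma$, and $\alpha(\mathsf{y})$ arises by substituting $\mathsf{y}$ for the variables $\mathsf{x}$, so its free variables are among $\mathsf{y}$. To be fully formal one renames $\mathsf{y}$ to a sequence disjoint from the finitely many free variables occurring in $\Gamma$ before assembling $\cn\alpha$. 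This side-condition bookkeeping is the only genuine obstacle; the deductive content of the argument is just the short assembly-then-$\subseteq\!\mathsf{Exp}$ derivation sketched above.
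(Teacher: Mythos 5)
Your proposal is correct and follows exactly the paper's own argument: unpack $\cn\alpha$ as $\exists\mathsf{y}(\mathsf{y}\subseteq\mathsf{x}\wedge\neg\alpha(\mathsf{y}/\mathsf{x}))$, use \conji{} and \existsi{} to turn the hypothesis $\Gamma,\cn\alpha\vdash\bot$ into a derivation of $\bot$ from $\Gamma,\mathsf{y}\subseteq\mathsf{x},\neg\alpha(\mathsf{y}/\mathsf{x})$, and conclude $\alpha$ by one application of $\subseteq\!\mathsf{Exp}$. Your explicit check of the side conditions (freshness of $\mathsf{y}$, free variables of $\alpha(\mathsf{y}/\mathsf{x})$ among $\mathsf{y}$) merely spells out what the paper leaves implicit.
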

\begin{proof}
Let $\alpha=\alpha(\mathsf{x})$ and $\cn\alpha(\mathsf{x})=\exists \mathsf{y}(\mathsf{y}\subseteq\mathsf{x}\wedge\neg\alpha(\mathsf{y}/\mathsf{x}))$, where $\mathsf{y}$ is a sequence of fresh variables. Suppose $\Gamma,\cn\alpha\vdash\bot$. By $\subseteq\!\mathsf{Exp}$, it suffices to show that $\Gamma,\mathsf{y}\subseteq \mathsf{x},\neg\alpha(\mathsf{y}/\mathsf{x})\vdash\bot$. But this follows easily from \existsi and the assumption $\Gamma,\cn\alpha\vdash\bot$.
%Suppose $\Gamma,\cn\alpha\vdash\bot$. In view of \textsf{RAA} (for $\neg$) it suffices to show $\Gamma,\neg\alpha\vdash\bot$. 
%Let $\mathsf{x}$ list all free variables in $\alpha$. By Proposition \ref{inc_atm_prop}\ref{inc_atm_prop_c1}, we have $\vdash\mathsf{x}\subseteq\mathsf{x}$, which implies
%$\vdash\exists \mathsf{y}(\mathsf{y}\subseteq\mathsf{x})$ by \existsi. Then, by \incc we derive that 
%\[\neg\alpha(\mathsf{x})\vdash\exists \mathsf{y}(\mathsf{y}\subseteq\mathsf{x}\wedge\neg\alpha(\mathsf{x}))\vdash\exists \mathsf{y}(\mathsf{y}\subseteq\mathsf{x}\wedge\neg\alpha(\mathsf{y}/\mathsf{x}))\vdash\cn\alpha(\mathsf{x}).\]
%Now, since $\Gamma,\cn\alpha\vdash\bot$, we conclude that $\Gamma,\neg\alpha\vdash\bot$. 
%We have the following derivation:
%\begin{align*}
%\Gamma,\neg\alpha(\mathsf{x})\vdash\,& \exists \mathsf{y}(\mathsf{y}\subseteq\mathsf{x})\tag{Prop. \ref{inc_atm_prop}\ref{inc_atm_prop_c6}}\\
%\vdash& \exists \mathsf{y}(\mathsf{y}\subseteq\mathsf{x}\wedge\neg\alpha(\mathsf{x}))\tag{Prop. \ref{der_rules}\ref{der_rules_c1}}\\
%\vdash& \exists \mathsf{y}(\mathsf{y}\subseteq\mathsf{x}\wedge\neg\alpha(\mathsf{y}))(\text{ i.e., }\cn\alpha(\mathsf{x}))\tag{\incc}\\
%\vdash& \bot\tag{assumption}.
%\end{align*}
\end{proof}

\section{The completeness theorem}

In this section, we prove the completeness theorem for our system of \Inc with respect to first-order consequences. To be precise, we  prove that
\begin{equation}\label{fo_comp_eq}
\Gamma\vdash\alpha\iff\Gamma\models\alpha
\end{equation}
holds whenever $\Gamma$ is a set of \Inc-formulas, and $\alpha$ is a first-order formula. As sketched in Section \ref{sec:pre}, our proof combines the technique introduced in \cite{Axiom_fo_d_KV} and a trick developed in \cite{Yang_neg18} using the weak classical negation \cn and the $\mathsf{RAA}$ rule for \cn.
%that involves the weak classical negation \cn and the $\mathsf{RAA}$ rule for \cn. 
 The former treats the case when the set $\Gamma\cup\{\alpha\}$ of formulas in  \eqref{fo_comp_eq}  are sentences (with no free variables) only, while the  trick of the latter allows us to handle (open) formulas as well. 
 %However, the crucial rule used in the trick, the $\mathsf{RAA}$ rule for \cn, is not in general effective
 % this trick involves the $\mathsf{RAA}$ rule for \cn
Since the weak classical negation $\cn\alpha$ of first-order formulas $\alpha$ are definable uniformly in \Inc (Fact \ref{cnalpha_df}), and the $\mathsf{RAA}$ rule for \cn is derivable in our system of \Inc (Lemma \ref{neg_trans}), we will be able to apply the trick of \cite{Yang_neg18} in a smoother manner than in the systems of dependence and independence logic  \cite{Yang_neg18} (in which the $\mathsf{RAA}$ rule for \cn was added in an ad hoc and non-effective manner).

% the ad hoc treatment of the $\mathsf{RAA}$ rule for \cn in the dependence and independence logic cases \cite{Yang_neg18}.
%
%the trick of \cite{Yang_neg18} can be applied smoothly for \Inc (in contrast to the cases for dependence and independence logic, which require a more complex treatment for the $\mathsf{RAA}$ rule for \cn, as done in \cite{Yang_neg18}). %in which the $\mathsf{RAA}$ rule for \cn has to be added manually, as is done in  \cite{Yang_neg18}). \todo{rephrase}

%We now  prove the remaining lemmas needed for the argument of \cite{Axiom_fo_d_KV}.
%Note that the \textsf{RAA} rule for \cn played a crucial role in \cite{Yang_neg18} \todo{remarks: do not need to add new rule for \cn}

We have prepared in the previous sections most relevant lemmas for the argument in \cite{Axiom_fo_d_KV} concerning the normal form of \Inc-formulas (especially Lemma \ref{nf_der}), the game expression and its approximations. Another important lemma for the completeness theorem is that any \Inc-formula $\phi$  implies every approximation $\Phi_n$ of its game expression (as introduced in Section \ref{sec:game_exp}). %\todo{improve}
\begin{lemma}\label{approx_der}
For any \Inc-sentence $\phi$, we have that $\phi\vdash\Phi_n$ for every $n<\omega$.
\end{lemma}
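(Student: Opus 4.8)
The plan is to convert the construction of $\exists$loise's winning strategy in the proof of Theorem~\ref{approx_game_expr}(i) into a derivation. By Lemma~\ref{nf_der} we have $\phi\vdash\phi'$ with $\phi'$ the sentence normal form \eqref{inc_nf} out of which $\Phi_n$ is built, so it suffices to prove $\phi'\vdash\Phi_n$; write $\phi'=\exists\mathsf{w}_0\mathsf{x}_0\forall y_0\big(\iota_\rho\wedge\iota_\pi(y_0)\wedge\alpha(\mathsf{w}_0,\mathsf{x}_0)\big)$ with $\iota_\rho=\bigwedge_{i\in I}\rho^i_{\mathsf{w}_0}\subseteq\sigma^i_{\mathsf{w}_0}$ and $\iota_\pi(y)=\bigwedge_{j\in J}\pi^j_{\mathsf{x}_0}y\subseteq\tau^j_{\mathsf{x}_0}$. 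First I would introduce a \emph{decorated} approximation $\widehat\Phi_n$, obtained from $\Phi_n$ by adjoining, as extra conjuncts of its innermost matrix $\alpha_n\wedge\gamma_n\wedge\delta_n$, the team atoms $\iota_\rho$, a universally quantified copy $\forall y\,\iota_\pi(y)$ of the $\pi$-atoms, and the \emph{membership atoms} $\mathsf{w}_\xi\mathsf{x}_\xi\subseteq\mathsf{w}_0\mathsf{x}_0$ for every witness block $\xi\in\bigcup_{k\le n}(E_k\cup U_k)$ introduced through layer $n$. I would prove $\phi'\vdash\widehat\Phi_n$ by induction on $n$ and then erase the decorations (by $\wedge$-elimination performed on the deep subformula, which is licensed by Proposition~\ref{mon_lem_2}) to recover $\phi'\vdash\Phi_n$. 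The point of decorating is that these three families of atoms are exactly what one round of spawning consumes, so they must be kept available at the current bottom layer. Every manipulation is carried out on a subformula buried inside the quantifier prefix, which is legitimate by Proposition~\ref{mon_lem_2} (its $\forall$-case being discharged by \uqs); hence I never open the outermost quantifiers and no eigenvariable issues arise.

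The technical heart is a pair of \emph{spawning} derivations reproducing $\exists$loise's two kinds of move. Given a block $\mathsf{w}_\xi\mathsf{x}_\xi$ with its membership atom $\mathsf{w}_\xi\mathsf{x}_\xi\subseteq\mathsf{w}_0\mathsf{x}_0$, projecting the latter (via \incexc and \incctr) gives $\rho^i_{\mathsf{w}_\xi}\subseteq\rho^i_{\mathsf{w}_0}$, and \inctr with the team atom $\rho^i_{\mathsf{w}_0}\subseteq\sigma^i_{\mathsf{w}_0}$ gives $\rho^i_{\mathsf{w}_\xi}\subseteq\sigma^i_{\mathsf{w}_0}$; iterating \incwe to record the full value of a matching row as fresh existential variables $\mathsf{w}_{\xi,i}\mathsf{x}_{\xi,i}$ yields $\exists\mathsf{w}_{\xi,i}\mathsf{x}_{\xi,i}(\rho^i_{\mathsf{w}_\xi}\mathsf{w}_{\xi,i}\mathsf{x}_{\xi,i}\subseteq\sigma^i_{\mathsf{w}_0}\mathsf{w}_0\mathsf{x}_0)$. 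Projecting this atom returns simultaneously the new membership atom $\mathsf{w}_{\xi,i}\mathsf{x}_{\xi,i}\subseteq\mathsf{w}_0\mathsf{x}_0$ and the inclusion $\langle\rho^i_{\mathsf{w}_\xi},\sigma^i_{\mathsf{w}_{\xi,i}}\rangle\subseteq\langle\sigma^i_{\mathsf{w}_0},\sigma^i_{\mathsf{w}_0}\rangle$, whence $\rho^i_{\mathsf{w}_\xi}=\sigma^i_{\mathsf{w}_{\xi,i}}$ by Proposition~\ref{inc_atm_prop}\ref{inc_atm_prop_c2} --- precisely the required conjunct of $\gamma$ --- while \incc transfers $\alpha(\mathsf{w}_0,\mathsf{x}_0)$ along the membership atom to the conjunct $\alpha(\mathsf{w}_{\xi,i},\mathsf{x}_{\xi,i})$ of $\alpha$. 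The second spawning, for an atom $\pi^j_{\mathsf{x}_0}y\subseteq\tau^j_{\mathsf{x}_0}$ and an already introduced universal variable $y_\eta$, runs in parallel: from $\pi^j_{\mathsf{x}_\xi}\subseteq\pi^j_{\mathsf{x}_0}$ one obtains $\forall y(\pi^j_{\mathsf{x}_\xi}y_\eta\subseteq\pi^j_{\mathsf{x}_0}y)$ by \incwu, merges it with the retained $\forall y(\pi^j_{\mathsf{x}_0}y\subseteq\tau^j_{\mathsf{x}_0})$ by \uqextc, applies \inctr inside the quantifier (Proposition~\ref{mon_lem_2}), and strips $\forall y$ by \uqez (legitimate, since $y$ is then free nowhere in the atom), reaching $\pi^j_{\mathsf{x}_\xi}y_\eta\subseteq\tau^j_{\mathsf{x}_0}$; iterating \incwe and projecting as before delivers the conjunct $\langle\pi^j_{\mathsf{x}_\xi},y_\eta\rangle=\tau^j_{\mathsf{x}_{\xi\eta,j}}$ of $\delta$, a new membership atom, and a conjunct of $\alpha$.

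The inductive step $\widehat\Phi_n\vdash\widehat\Phi_{n+1}$ is then the syntactic image of one round of the strategy. Working on the innermost matrix of $\widehat\Phi_n$, I would run the first spawning once for each $i\in I$ and each $\xi\in E_n\cup U_n$, and the second once for each newly available pair in $A_{n+1}$ and each $j\in J$, collecting the resulting existentials into a single block $\exists\mathsf{w}^{n+1}\mathsf{x}^{n+1}$ (merging nested existentials by Proposition~\ref{der_rules}\ref{der_rules_c1}); this produces $\alpha_{n+1}\wedge\gamma_{n+1}\wedge\delta_{n+1}$, enlarges the membership decoration by the freshly recorded blocks, and leaves $\iota_\rho$ and $\forall y\,\iota_\pi(y)$ intact for the next round. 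It remains to close the binder $\forall y_{n+1}$: as $y_{n+1}$ occurs in none of $\alpha_{n+1},\gamma_{n+1},\delta_{n+1}$ (the indices of $\delta$ only reach universals of strictly earlier layers), it is introduced vacuously by \uqi, and its variable becomes an admissible input $y_\eta$ for the layer-$(n+2)$ spawnings, the re-introduction of $y_{n+1}$-dependence at that later stage again being absorbed by the monotonicity of Proposition~\ref{mon_lem_2} under $\forall$. The base case $n=0$ merely pulls the $y_0$-free conjuncts out from under $\forall y_0$ using Proposition~\ref{der_rules2}\ref{der_rules2_c3} and \uqez (this is also how the decoration $\forall y\,\iota_\pi(y)$ is obtained, namely as $\forall y_0\,\iota_\pi(y_0)$), records $\mathsf{w}_0\mathsf{x}_0\subseteq\mathsf{w}_0\mathsf{x}_0$ by Proposition~\ref{inc_atm_prop}\ref{inc_atm_prop_c1}, and re-closes $\forall y_0$ vacuously.

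I expect the main obstacle to be organizational rather than conceptual. Each elementary step is an instance of an inclusion-atom rule already validated in the Soundness Theorem, but the bookkeeping is delicate: one must keep the nested prefix of $\Phi_n$ aligned with the order in which witnesses are spawned and binders are closed, enumerate the index sets $E_k,U_k,A_k$ correctly so that $\exists\mathsf{w}^k\mathsf{x}^k$ gathers exactly the blocks indexed by $E_k\cup U_k$ before $\forall y_k$ is inserted, and verify that the decoration invariant (team atoms, the universal $\pi$-copy, and all membership atoms) survives each round. The subtle point within this is the interplay between the vacuous closure of $\forall y_k$ and its later non-vacuous use one layer deeper, which is precisely where I would lean on Proposition~\ref{mon_lem_2} to localize every rule application to the correct subformula.
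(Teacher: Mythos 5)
Your proposal is correct, and its skeleton coincides with the paper's own proof: the paper likewise proves a strengthened claim $\phi\vdash\Phi_n'$, where $\Phi_n'$ is exactly your ``decorated'' approximation --- its conjunct $\mu_0$ consists of the team atoms $\iota_\rho$ together with the $\pi$-atoms, and its conjuncts $\mu_k$ are precisely your membership atoms $\mathsf{w}_\xi\mathsf{x}_\xi\subseteq\mathsf{w}_0\mathsf{x}_0$ --- with the same base case (renaming plus $\mathsf{w}_0\mathsf{x}_0\subseteq\mathsf{w}_0\mathsf{x}_0$), the same copying of conjuncts into the innermost matrix via Proposition \ref{der_rules}, the same localization of all surgery by Proposition \ref{mon_lem_2}, and the same vacuous \uqi closure of $\forall y_{n+1}$. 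Your first spawning is in essence a local re-proof of the paper's Proposition \ref{inc_atm_prop_c0} with empty $\mathsf{z}$-part: the paper extracts the equality $\rho^i_{\mathsf{w}_\xi}=\sigma^i_{\mathsf{w}_{\xi,i}}$ by an \eqi/\eqsub permutation trick, you by projecting onto a $\mathsf{zz}$-pattern and invoking Proposition \ref{inc_atm_prop}\ref{inc_atm_prop_c2}; both are fine. Where you genuinely diverge is the $\delta$-spawning. The paper keeps the concrete instance $\pi^j_{\mathsf{x}_0}y_0\subseteq\tau^j_{\mathsf{x}_0}$ under the real binder $\forall y_0$ and, to feed $y_\eta$ into the witnessing step, must first upgrade every membership atom to $\mathsf{w}_\xi\mathsf{x}_\xi y_\eta\subseteq\mathsf{w}_0\mathsf{x}_0y_0$; this is carried out by the dedicated structural Proposition \ref{inc_atm_prop_c7_prop}, whose proof is its own induction through the quantifier prefix (using Proposition \ref{inc_atm_prop_c4} in the $\exists$-case), after which Proposition \ref{inc_atm_prop_c0} is applied with nonempty $\mathsf{z}$-part. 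You bypass that machinery entirely by carrying the requantified copy $\forall y\,\iota_\pi(y)$ as a decoration and working purely at the innermost matrix: project the membership atom to $\pi^j_{\mathsf{x}_\xi}\subseteq\pi^j_{\mathsf{x}_0}$, apply \incwu with $z:=y_\eta$ and a fresh bound variable, merge with the decoration by \uqextc, cut by \inctr, and strip by \uqez. I checked this chain against the rules and the semantics; every side condition is met (freshness of the bound variable for \incwu and the renaming via Proposition \ref{der_rules2}\ref{der_rules2_c5}, the \uqez condition after transitivity, and the \incc condition that the free variables of $\alpha(\mathsf{w}_0,\mathsf{x}_0)$ lie in $\mathsf{w}_0\mathsf{x}_0$, which holds because $\phi$ is a sentence). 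What your route buys is that the most delicate lemma of the paper's proof, Proposition \ref{inc_atm_prop_c7_prop}, becomes unnecessary, since the membership atoms never need to absorb universal variables; the modest price is the extra decoration $\forall y\,\iota_\pi(y)$ and the need to keep $\alpha(\mathsf{w}_0,\mathsf{x}_0)$ available at the innermost matrix for \incc --- a conjunct you use but do not list among your invariants, which is a gloss (it is copied down exactly like the others), not a gap.
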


In order to prove the above lemma, we first need to prove a number of technical propositions and lemmas.

\begin{proposition}\label{inc_atm_prop_c0}
Let $\rho:\mathsf{Var}^n\to \mathsf{Var}^k,\sigma:\mathsf{Var}^n\to \mathsf{Var}^m$ be functions. Then
\[\rho_{\mathsf{x}}\mathsf{z}\subseteq \sigma_\mathsf{x},\mathsf{x}_0\mathsf{y}_0\mathsf{z}_0\subseteq\mathsf{x}\mathsf{y}\mathsf{z}\vdash \exists \mathsf{x}_1\mathsf{y}_1(\mathsf{x}_1\mathsf{y}_1\subseteq \mathsf{x}\mathsf{y}\,\wedge\, \rho_{\mathsf{x}_0}\mathsf{z}_0=\sigma_{\mathsf{x}_1}),\] 
where $|\mathsf{x}|=|\mathsf{x}_0|=|\mathsf{x}_1|$, $|\mathsf{y}|=|\mathsf{y}_0|=|\mathsf{y}_1|$ and $|\mathsf{z}|=|\mathsf{z}_0|$.
In particular, when $\mathsf{z}$ and $\mathsf{z}_0$ are the empty sequence we have that   $\rho_{\mathsf{x}}\subseteq \sigma_\mathsf{x},\mathsf{x}_0\mathsf{y}_0\subseteq\mathsf{x}\mathsf{y}\vdash \exists \mathsf{x}_1\mathsf{y}_1(\mathsf{x}_1\mathsf{y}_1\subseteq \mathsf{x}\mathsf{y}\,\wedge\, \rho_{\mathsf{x}_0}=\sigma_{\mathsf{x}_1})$.
\end{proposition}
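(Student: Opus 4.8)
The plan is to chain the two premises by transitivity and then convert the resulting inclusion atom into the desired existential conjunction by a \emph{doubling trick}. Throughout I use the projection--permutation--repetition rule freely; as noted in the discussion preceding Proposition~\ref{inc_atm_prop}, it is derivable from \incexc, \incctr and Proposition~\ref{inc_atm_prop}\ref{inc_atm_rep}. First I would project the second premise: since $\rho_{\mathsf{x}_0}\mathsf{z}_0$ and $\rho_{\mathsf{x}}\mathsf{z}$ are obtained from $\mathsf{x}_0\mathsf{y}_0\mathsf{z}_0$ and $\mathsf{x}\mathsf{y}\mathsf{z}$ respectively by selecting the same positions (the $\rho$-indexed coordinates of the first block together with the last block, using $|\mathsf{y}_0|=|\mathsf{y}|$ to align the indices), applying the projection rule to $\mathsf{x}_0\mathsf{y}_0\mathsf{z}_0\subseteq\mathsf{x}\mathsf{y}\mathsf{z}$ yields $\rho_{\mathsf{x}_0}\mathsf{z}_0\subseteq\rho_{\mathsf{x}}\mathsf{z}$. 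Combining this with the first premise $\rho_{\mathsf{x}}\mathsf{z}\subseteq\sigma_{\mathsf{x}}$ via \inctr gives $\rho_{\mathsf{x}_0}\mathsf{z}_0\subseteq\sigma_{\mathsf{x}}$; this is well-formed since $|\rho_{\mathsf{x}_0}\mathsf{z}_0|=k+|\mathsf{z}|=|\sigma_{\mathsf{x}}|$.

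Next I would ``record'' the witness. Applying \incwe once for each variable of $\mathsf{x}\mathsf{y}$ (appending these on the right and fresh variables on the left, and pushing the derivation under the successive existentials by Proposition~\ref{mon_lem_2}) turns $\rho_{\mathsf{x}_0}\mathsf{z}_0\subseteq\sigma_{\mathsf{x}}$ into $\exists\mathsf{x}_1\mathsf{y}_1\big(\rho_{\mathsf{x}_0}\mathsf{z}_0\,\mathsf{x}_1\mathsf{y}_1\subseteq\sigma_{\mathsf{x}}\,\mathsf{x}\mathsf{y}\big)$, where $\mathsf{x}_1\mathsf{y}_1$ are fresh of the same length as $\mathsf{x}\mathsf{y}$. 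It then remains to show the implication $\rho_{\mathsf{x}_0}\mathsf{z}_0\,\mathsf{x}_1\mathsf{y}_1\subseteq\sigma_{\mathsf{x}}\,\mathsf{x}\mathsf{y}\vdash\mathsf{x}_1\mathsf{y}_1\subseteq\mathsf{x}\mathsf{y}\wedge\rho_{\mathsf{x}_0}\mathsf{z}_0=\sigma_{\mathsf{x}_1}$ and lift it under $\exists\mathsf{x}_1\mathsf{y}_1$ by Proposition~\ref{mon_lem_2} (the occurrence is positive). Projecting the matrix to the $\mathsf{x}_1\mathsf{y}_1/\mathsf{x}\mathsf{y}$ coordinates gives the first conjunct $\mathsf{x}_1\mathsf{y}_1\subseteq\mathsf{x}\mathsf{y}$ directly.

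The second conjunct is the crux, and I would obtain it by the doubling trick: projecting the \emph{same} matrix to its first $m$ coordinates together with the $\sigma$-indexed coordinates of the $\mathsf{x}_1$-block yields $\rho_{\mathsf{x}_0}\mathsf{z}_0\,\sigma_{\mathsf{x}_1}\subseteq\sigma_{\mathsf{x}}\,\sigma_{\mathsf{x}}$, because position $\sigma(i)$ of the $\mathsf{x}$-block on the right is exactly the $i$-th entry of $\sigma_{\mathsf{x}}$, so the selected right-hand coordinates are a second copy of $\sigma_{\mathsf{x}}$. An inclusion atom of the form $\mathsf{a}\mathsf{b}\subseteq\mathsf{c}\mathsf{c}$ collapses to the equality $\mathsf{a}=\mathsf{b}$ by Proposition~\ref{inc_atm_prop}\ref{inc_atm_prop_c2} (with $|\mathsf{a}|=|\mathsf{b}|=|\mathsf{c}|=m$), giving $\rho_{\mathsf{x}_0}\mathsf{z}_0=\sigma_{\mathsf{x}_1}$. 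Conjoining the two conjuncts by \conji and lifting finishes the derivation; the ``in particular'' case is simply the instance where $\mathsf{z}$ and $\mathsf{z}_0$ are empty.

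The step I expect to be the main obstacle is exactly this second conjunct: one must see that appending a full copy of the witness's $\mathsf{x}\mathsf{y}$-values and then re-projecting so that the right-hand block becomes the repeated sequence $\sigma_{\mathsf{x}}\sigma_{\mathsf{x}}$ is what makes the equality extractable via Proposition~\ref{inc_atm_prop}\ref{inc_atm_prop_c2}. The remaining care is bookkeeping, namely verifying that the chosen coordinate selections are legitimate projections even when $\rho$ or $\sigma$ identify variables (which is why the repetition part of the projection rule, Proposition~\ref{inc_atm_prop}\ref{inc_atm_rep}, is needed).
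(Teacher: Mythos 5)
Your derivation is correct, and its second half takes a genuinely different route from the paper's. Both proofs open identically: project the second premise to $\rho_{\mathsf{x}_0}\mathsf{z}_0\subseteq\rho_{\mathsf{x}}\mathsf{z}$ (\incexc, \incctr), chain with the first premise by \inctr to get $\rho_{\mathsf{x}_0}\mathsf{z}_0\subseteq\sigma_{\mathsf{x}}$, and then pad this atom with existentially quantified fresh variables via \incwe. From there the mechanisms diverge. The paper pads the right-hand side only with the \emph{complement} of $\sigma_{\mathsf{x}}$ in $\mathsf{x}$, writing $\mathsf{p}(\mathsf{x})=\sigma_{\mathsf{x}}\tau_{\mathsf{x}}$ for a permutation $\mathsf{p}$, and then manufactures the witness sequence $\mathsf{x}_1$ by explicit equality introduction: \eqi and \existsi introduce $\mathsf{p}(\mathsf{x}_1)=\rho_{\mathsf{x}_0}\mathsf{z}_0\mathsf{w}$, \eqsub rewrites the atom as $\mathsf{p}(\mathsf{x}_1)\mathsf{y}_1\subseteq\mathsf{p}(\mathsf{x})\mathsf{y}$, and \incexc unpermutes, so the equality conjunct holds by construction of $\mathsf{x}_1$. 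You instead pad with the \emph{full} sequence $\mathsf{x}\mathsf{y}$, let the fresh left-hand variables $\mathsf{x}_1\mathsf{y}_1$ themselves serve as the witnesses, and recover both conjuncts by re-projecting the padded atom: the inclusion conjunct directly, and the equality conjunct via the doubling observation that the selected right-hand coordinates read $\sigma_{\mathsf{x}}\sigma_{\mathsf{x}}$, so Proposition \ref{inc_atm_prop}\ref{inc_atm_prop_c2} collapses $\rho_{\mathsf{x}_0}\mathsf{z}_0\,\sigma_{\mathsf{x}_1}\subseteq\sigma_{\mathsf{x}}\sigma_{\mathsf{x}}$ to $\rho_{\mathsf{x}_0}\mathsf{z}_0=\sigma_{\mathsf{x}_1}$; \conji and Proposition \ref{mon_lem_2} then finish. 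Each step of yours is licensed by rules the paper makes available (the full projection rule with permutation and repetition is noted before Proposition \ref{inc_atm_prop} to follow from \incexc, \incctr and \ref{inc_atm_prop}\ref{inc_atm_rep}). What your route buys: it avoids \eqi/\eqsub and, more substantively, the permutation--complement decomposition of $\mathsf{x}$, so it goes through verbatim even if one reads the hypothesis's ``functions'' liberally and allows $\rho,\sigma$ to repeat coordinates (in which case no complement $\tau_{\mathsf{x}}$ and no permutation $\mathsf{p}$ exist). What the paper's route buys: under its standing convention that $\pi,\rho,\sigma,\tau$ arise from permutations, the witness is produced in one definitional stroke and the equality conjunct needs no separate collapse lemma.
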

\begin{proof}
Assume that $\mathsf{p}(\mathsf{x})=\sigma_{\mathsf{x}}\tau_{\mathsf{x}}$ for some permutation $\mathsf{p}$ of the sequence $\mathsf{x}$. Then we have\vspace{-4pt}
\begin{align*}
&\rho_{\mathsf{x}}\mathsf{z}\subseteq \sigma_{\mathsf{x}},\mathsf{x}_{0}\mathsf{y}_0\mathsf{z}_0\subseteq \mathsf{x}\mathsf{y}\mathsf{z}\\
\vdash&\rho_{\mathsf{x}}\mathsf{z}\subseteq \sigma_{\mathsf{x}}\wedge\rho_{\mathsf{x}_0}\mathsf{z}_0\subseteq \rho_{\mathsf{x}}\mathsf{z}\tag{\incctr, \incexc}\\
\vdash&\rho_{\mathsf{x}_0}\mathsf{z}_0\subseteq \sigma_{\mathsf{x}}\tag{\inctr}\\
\vdash& \exists \mathsf{w}\mathsf{y}_1(\rho_{\mathsf{x}_0}\mathsf{z}_0\mathsf{w}\mathsf{y}_1\subseteq \sigma_{\mathsf{x}}\tau_{\mathsf{x}}\mathsf{y})\tag{\incwe, where $|\mathsf{w}|=|\tau_\mathsf{x}|$}\\
\vdash& \exists \mathsf{x}_1\mathsf{w}\mathsf{y}_1(\mathsf{p}(\mathsf{x}_1)=\rho_{\mathsf{x}_0}\mathsf{z}_0\mathsf{w}\,\wedge\,\rho_{\mathsf{x}_0}\mathsf{z}_0\mathsf{w}\mathsf{y}_1\subseteq \sigma_{\mathsf{x}}\tau_{\mathsf{x}}\mathsf{y})\tag{\eqi, \existsi, $|\rho_{\mathsf{x}_0}\mathsf{z}_0\mathsf{w}|=|\mathsf{x}|$}\\
\vdash& \exists \mathsf{x}_1\mathsf{w}\mathsf{y}_1(\sigma_{\mathsf{x}_1}\tau_{\mathsf{x}_1}=\rho_{\mathsf{x}_0}\mathsf{z}_0\mathsf{w}\,\wedge\,\mathsf{p}(\mathsf{x}_1)\mathsf{y}_1\subseteq \mathsf{p}(\mathsf{x})\mathsf{y})\tag{\eqsub}\\
\vdash& \exists \mathsf{x}_1\mathsf{y}_1(\sigma_{\mathsf{x}_1}=\rho_{\mathsf{x}_0}\mathsf{z}_0\,\wedge\,\mathsf{p}(\mathsf{x}_1)\mathsf{y}_1\subseteq \mathsf{p}(\mathsf{x})\mathsf{y})\tag{since $|\sigma_{\mathsf{x}_1}|=|\sigma_{\mathsf{x}}|=|\rho_{\mathsf{x}_0}\mathsf{z}_0|$}\\
\vdash&\exists \mathsf{x}_1\mathsf{y}_1(\sigma_{\mathsf{x}_1}=\rho_{\mathsf{x}_0}\mathsf{z}_0\,\wedge\,\mathsf{x}_1\mathsf{y}_1\subseteq \mathsf{x}\mathsf{y}).\tag{\incexc}
\end{align*}\vspace{-6pt}
\end{proof}

We say that an occurrence of a subformula $\theta$ in $\phi(\theta)$ is {\em not in the scope of a disjunction or negation} if (1) $\phi=\theta$; or (2) $\phi=\psi(\theta)\wedge\chi$ or $\chi\wedge\psi(\theta)$, and $\theta$ is not in the scope of a disjunction or negation in $\psi(\theta)$; or (3) $\phi=Qx\psi(\theta)$ ($Q\in\{\forall,\exists\}$) and $\theta$ is not in the scope of a disjunction or negation in $\psi(\theta)$. For example, in the formula $(\phi(\theta)\vee\psi)\wedge\exists x\theta$, the leftmost occurrence of $\theta$ is in the scope of a disjunction, while the rightmost occurrence of $\theta$ is not. %We write $\phi[\chi/\theta]$ for the formula obtained from $\phi(\theta)$ by replacing the single occurrence of $\theta$ by $\chi$.

\begin{lemma}\label{mon_lem_1}
If the occurrence of the subformula $\theta$ in $\phi(\theta)$  is not in the scope of a disjunction or negation, then $\phi(\theta),\psi\vdash\phi[\theta\wedge\psi/\theta]$.
\end{lemma}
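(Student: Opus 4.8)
The plan is to prove Lemma~\ref{mon_lem_1} by induction following the inductive definition of ``the occurrence of $\theta$ in $\phi(\theta)$ is not in the scope of a disjunction or negation'' given just before the statement. That definition has exactly three shapes for $\phi$ (the atom case $\phi=\theta$, a conjunction in which the distinguished conjunct carries the occurrence, and a quantification carrying the occurrence), so the induction has one base case and three inductive cases (two of them, the left and right conjunction, being symmetric). Before starting I would note that in the quantifier cases we may assume, without loss of generality and renaming bound variables if necessary (using Proposition~\ref{der_rules2}\ref{der_rules2_c5} for the universal quantifier and the analogous fact for the existential one), that the variable bound by the outermost quantifier of $\phi$ does not occur free in $\psi$; this freshness assumption is what will license the side conditions of the quantifier rules below.

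For the base case $\phi=\theta$ we have $\phi[\theta\wedge\psi/\theta]=\theta\wedge\psi$, and $\theta,\psi\vdash\theta\wedge\psi$ is immediate by \conji. For the conjunction case, say $\phi=\psi'(\theta)\wedge\chi$ with the occurrence unguarded inside $\psi'(\theta)$, I would extract both conjuncts from the premise $\phi(\theta)$ by \conje, apply the induction hypothesis to obtain $\psi'(\theta),\psi\vdash\psi'[\theta\wedge\psi/\theta]$, and then recombine with $\chi$ by \conji to conclude $\psi'[\theta\wedge\psi/\theta]\wedge\chi=\phi[\theta\wedge\psi/\theta]$; the case $\phi=\chi\wedge\psi'(\theta)$ is the mirror image.

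The interesting cases are the quantifiers $\phi=Qx\,\psi'(\theta)$. For $Q=\exists$, from the induction hypothesis $\psi'(\theta),\psi\vdash\psi'[\theta\wedge\psi/\theta]$ I would first apply \existsi to get $\psi'(\theta),\psi\vdash\exists x\,\psi'[\theta\wedge\psi/\theta]$, and then discharge $\psi'(\theta)$ via \existse with major premise $\exists x\,\psi'(\theta)$; the side condition of \existse is met because $x$ is bound in the conclusion and, by the freshness assumption, does not occur free in the remaining open assumption $\psi$. For $Q=\forall$ the naive elimination $\forall x\psi'(\theta)/\psi'(t/x)$ is unsound in \Inc, so instead I would invoke the substitution rule \uqs with the renaming variable chosen to be $x$ itself: taking $\forall x\,\psi'(\theta)$ as the major premise and the induction-hypothesis derivation $[\psi'(\theta)],\psi\vdash\psi'[\theta\wedge\psi/\theta]$ as the minor deduction (discharging $\psi'(\theta)$), the rule yields $\forall x\,\psi'[\theta\wedge\psi/\theta]=\phi[\theta\wedge\psi/\theta]$, its side condition again following from $x\notin\mathsf{Fv}(\psi)$.

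The step I expect to be the main obstacle is precisely the universal case: one cannot instantiate $\forall x$ and must route the argument through \uqs, and the correctness of that application hinges entirely on controlling the interaction between the bound variable $x$ and the free variables of the extra premise $\psi$. Getting the freshness conventions right — so that the open assumption $\psi$ survives the applications of \existse and \uqs without violating their eigenvariable side conditions — is where the care is needed; the remaining bookkeeping is routine.
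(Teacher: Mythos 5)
Your proof is correct and takes essentially the same route as the paper, whose entire proof reads ``A routine inductive proof. Apply \uqs, \existse, \existsi in the quantifier cases'' --- precisely the rules you deploy, with \conji/\conje handling the base and conjunction cases. Your explicit attention to the eigenvariable side conditions of \existse and \uqs (arranging, by renaming bound variables, that the quantified variable avoids $\mathsf{Fv}(\psi)$) correctly spells out the variable-hygiene convention that the paper leaves implicit.
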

\begin{proof}
A routine inductive proof. Apply \uqs, \existse, \existsi in the quantifier cases. 
%that makes use of Proposition \ref{der_rules}\ref{der_rules_c1}\ref{der_rules_c2}.
\end{proof}

\begin{proposition}\label{inc_atm_prop_c7_prop}
Suppose that $\phi(\mathsf{x}\subseteq\mathsf{y})$ is a formula in which the occurrence of $\mathsf{x}\subseteq\mathsf{y}$ is not in the scope of a disjunction or negation, and  the variables from $\mathsf{y}$ are  free in $\phi$. If $\mathsf{z}$ does not have any common variable with $\mathsf{x}\mathsf{y}$, and $\mathsf{w}$ contains some variables occurring in $\phi$ (either free or bound), then $\forall \mathsf{z}\phi(\mathsf{x}\subseteq\mathsf{y})\vdash \forall \mathsf{z}\phi[\mathsf{x}\mathsf{w}\subseteq\mathsf{y}\mathsf{z}/\mathsf{x}\subseteq\mathsf{y}]$. %\todo{CHECK!}
\end{proposition}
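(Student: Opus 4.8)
\emph{Plan.} I read $\mathsf{z}$ as a sequence of fresh variables not occurring in $\phi$ (the hypothesis deliberately isolates $\mathsf{w}$ as the variables that \emph{do} occur in $\phi$, while $\mathsf{z}$ are the newly added right-hand arguments), so that $\mathsf{z}$ is disjoint from $\mathsf{x}\mathsf{y}$ and from $\mathsf{w}$, and the leading $\forall\mathsf{z}$ on the left is vacuous. The guiding idea is \emph{not} to strengthen the atom in place: the implication $\mathsf{x}\subseteq\mathsf{y}\vdash\mathsf{x}\mathsf{w}\subseteq\mathsf{y}\mathsf{z}$ is false in isolation, and general $\forall$-elimination is unsound in \Inc, so the outer $\forall\mathsf{z}$ cannot simply be instantiated at the atom. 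Instead I manufacture a universal quantifier \emph{at} the atom via \incwu and then export it to the front of $\phi$, because moving a universal block outward past the quantifiers on the path is the sound direction.

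\emph{Atomic step.} First I would prove the sequence form of \incwu: if $\mathsf{z}$ is disjoint from $\mathsf{x}\mathsf{w}\mathsf{y}$ and $|\mathsf{z}|=|\mathsf{w}|$, then $\mathsf{x}\subseteq\mathsf{y}\vdash\forall\mathsf{z}(\mathsf{x}\mathsf{w}\subseteq\mathsf{y}\mathsf{z})$. This goes by induction on $|\mathsf{w}|$: the one-variable case is a single application of \incwu, and in the inductive step one applies \incwu to the already enlarged atom $\mathsf{x}\mathsf{w}'\subseteq\mathsf{y}\mathsf{z}'$ and lifts the conclusion under the previously introduced $\forall\mathsf{z}'$ by the monotonicity Proposition~\ref{mon_lem_2} (legitimate, since that atom sits under universal quantifiers only).

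\emph{Main derivation.} As $\mathsf{z}$ does not occur in $\phi$, I delete the vacuous outer $\forall\mathsf{z}$ by \uqez, leaving $\phi(\mathsf{x}\subseteq\mathsf{y})$. The occurrence of $\mathsf{x}\subseteq\mathsf{y}$ is not under a negation, so Proposition~\ref{mon_lem_2} together with the atomic step rewrites it in place to $\phi(\mathsf{x}\subseteq\mathsf{y})\vdash\phi[\,\forall\mathsf{z}(\mathsf{x}\mathsf{w}\subseteq\mathsf{y}\mathsf{z})\,/\,\mathsf{x}\subseteq\mathsf{y}]$. It then remains to pull this locally introduced $\forall\mathsf{z}$ out to the front. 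I do so by induction along the path from the root of $\phi$ to the atom, which by hypothesis passes only through conjunctions and quantifiers: through a conjunction I use Proposition~\ref{der_rules}\ref{der_rules_c2} (valid because $\mathsf{z}$ does not occur in the other conjunct); through an outer $\forall u$ I use the exchange rule \uqexc; and through an outer $\exists u$ I use the derivable commutation $\exists u\forall\mathsf{z}\chi\vdash\forall\mathsf{z}\exists u\chi$, obtained by \existse from the assumption $\forall\mathsf{z}\chi$ using $\chi\vdash\exists u\chi$ (\existsi) lifted under $\forall\mathsf{z}$ by Proposition~\ref{mon_lem_2}. Each step is combined with Proposition~\ref{mon_lem_2} so as to act inside the surrounding context, yielding $\forall\mathsf{z}\,\phi[\mathsf{x}\mathsf{w}\subseteq\mathsf{y}\mathsf{z}/\mathsf{x}\subseteq\mathsf{y}]$, as required.

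\emph{Main obstacle.} The delicate point is precisely the extraction past existential quantifiers. Moving $\forall\mathsf{z}$ \emph{outward} past $\exists u$ (that is, $\exists u\forall\mathsf{z}\vdash\forall\mathsf{z}\exists u$) is the sound direction, and this is exactly why the quantifier must be manufactured at the atom and then exported rather than the atom being rewritten against the given outer $\forall\mathsf{z}$: the reverse movement $\forall\mathsf{z}\exists u\vdash\exists u\forall\mathsf{z}$ already fails classically, while re-using the given outer $\forall\mathsf{z}$ by instantiation fails in \Inc because its formulas are not downward closed, so one cannot pass to a diagonal sub-team. The remaining work is freshness bookkeeping: one must keep $\mathsf{z}$ disjoint from $\mathsf{w}$, from $\mathsf{x}\mathsf{y}$, and from every bound variable crossed on the path, renaming bound variables where necessary so that none of the scope manipulations capture a variable.
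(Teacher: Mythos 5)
Your proof rests on a misreading of the hypotheses: the statement requires $\mathsf{z}$ to be disjoint from $\mathsf{x}\mathsf{y}$ only, not from all of $\phi$, and in the intended application (Lemma \ref{approx_der}) the sequence $\mathsf{z}$ is $y_0$, which \emph{does} occur free in $\phi$ --- for instance in the conjuncts $\pi^j_{\mathsf{x}_0}y_0\subseteq\tau^j_{\mathsf{x}_0}$ of $\mu_0$ and in the equations of $\delta_1,\dots,\delta_n$. So the leading $\forall\mathsf{z}$ is not vacuous, cannot be deleted by \uqez, and the conclusion $\forall\mathsf{z}\,\phi[\mathsf{x}\mathsf{w}\subseteq\mathsf{y}\mathsf{z}/\mathsf{x}\subseteq\mathsf{y}]$ demands that the \emph{same} outer quantifier bind both the pre-existing occurrences of $\mathsf{z}$ in $\phi$ and the new right-hand arguments of the enlarged atom. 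Two internal signs that your reading is off: if $\mathsf{z}$ were fresh for $\phi$, the stated condition ``$\mathsf{z}$ disjoint from $\mathsf{x}\mathsf{y}$'' would be redundant (the variables $\mathsf{x}\mathsf{y}$ occur in $\phi$ via the atom itself), and your argument never uses the hypothesis that the variables of $\mathsf{y}$ are free in $\phi$, which the paper's proof needs essentially.

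The gap is not repairable within your strategy. Manufacturing $\forall\mathsf{z}'$ at the atom by iterated \incwu and exporting it outward (which is fine as far as it goes: the commutations through $\wedge$, through $\forall$ by \uqexc, and $\exists u\forall\mathsf{z}'\chi\vdash\forall\mathsf{z}'\exists u\chi$ by \existsi/\existse are all sound and derivable) can only yield $\forall\mathsf{z}\forall\mathsf{z}'\,\phi[\mathsf{x}\mathsf{w}\subseteq\mathsf{y}\mathsf{z}'/\mathsf{x}\subseteq\mathsf{y}]$ with a \emph{fresh} $\mathsf{z}'$; passing from this to $\forall\mathsf{z}\,\phi[\mathsf{x}\mathsf{w}\subseteq\mathsf{y}\mathsf{z}/\mathsf{x}\subseteq\mathsf{y}]$ is exactly the diagonal instantiation $\mathsf{z}':=\mathsf{z}$, i.e.\ the unrestricted $\forall$-elimination that you yourself correctly note is unsound in \Inc. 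Consequently the genuine difficulty points the opposite way from your ``main obstacle'': one must carry the \emph{given} outer $\forall\mathsf{z}$ inward past existential quantifiers down to the atom, the classically false direction. The paper achieves this in the case $\phi=\exists v\psi$ via Proposition \ref{inc_atm_prop_c4}, trading $\forall\mathsf{z}$ for $\exists\mathsf{z}$ plus a fresh $\forall\mathsf{z}_0$ tethered by the inclusion atom $\mathsf{u}\mathsf{z}_0\subseteq\mathsf{u}\mathsf{z}$; the induction hypothesis then produces the atom $\mathsf{x}\mathsf{w}\subseteq\mathsf{y}\mathsf{z}_0$, and --- here the hypothesis that $\mathsf{y}$ is free in $\phi$, hence among $\mathsf{u}$, is used --- \incctr extracts $\mathsf{y}\mathsf{z}_0\subseteq\mathsf{y}\mathsf{z}$, Lemma \ref{mon_lem_1} pushes it to the atom's position, \inctr converts $\mathsf{x}\mathsf{w}\subseteq\mathsf{y}\mathsf{z}_0$ into $\mathsf{x}\mathsf{w}\subseteq\mathsf{y}\mathsf{z}$, and Proposition \ref{inc_atm_prop_c4} is applied in reverse. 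What you have proved is only the strictly weaker special case in which $\mathsf{z}$ is fresh for $\phi$, and that case does not suffice for Lemma \ref{approx_der}.
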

\begin{proof}
We prove the proposition by induction on $\phi$. If $\phi=\mathsf{x}\subseteq\mathsf{y}$, since no variable from $\mathsf{z}$ occurs in $\mathsf{x}\subseteq\mathsf{y}$, we derive by \uqez  that $\forall \mathsf{z}(\mathsf{x}\subseteq \mathsf{y})\vdash\mathsf{x}\subseteq \mathsf{y}$. Next, we obtain by \incwu that $\mathsf{x}\subseteq \mathsf{y}\vdash\forall \mathsf{z}(\mathsf{x}\mathsf{w}\subseteq \mathsf{y}\mathsf{z})$. Thus, $\forall \mathsf{z}(\mathsf{x}\subseteq \mathsf{y})\vdash\forall \mathsf{z}(\mathsf{x}\mathsf{w}\subseteq \mathsf{y}\mathsf{z})$ follows.

%then we derive by applying \uqez and \incwu that 
%\(\forall \mathsf{z}(\mathsf{x}\subseteq \mathsf{y})\vdash\mathsf{x}\subseteq \mathsf{y}\vdash\forall \mathsf{z}(\mathsf{x}\mathsf{w}\subseteq \mathsf{y}\mathsf{z}).\)
%\begin{align*}
%\forall \mathsf{z}(\mathsf{x}\subseteq \mathsf{y})\,&\vdash\mathsf{x}\subseteq \mathsf{y}\tag{\uqez}\\
%&\vdash\forall \mathsf{z}(\mathsf{x}\mathsf{w}\subseteq \mathsf{y}\mathsf{z}).\tag{\incwu}
%\end{align*}

If $\phi=\psi(\mathsf{x}\subseteq\mathsf{y})\wedge\chi$, then we have that
\begin{align*}
\forall \mathsf{z}(\psi(\mathsf{x}\subseteq\mathsf{y})\wedge\chi)\vdash&\forall \mathsf{z}\psi(\mathsf{x}\subseteq\mathsf{y})\wedge\forall \mathsf{z}\chi\tag{Proposition \ref{der_rules2}\ref{der_rules2_c3}}\\
\vdash&\forall \mathsf{z}\psi[\mathsf{x}\mathsf{w}\subseteq\mathsf{y}\mathsf{z}/\mathsf{x}\subseteq\mathsf{y}]\wedge\forall \mathsf{z}\chi\tag{induction hypothesis}\\
%\vdash&\forall \mathsf{z}\psi[\mathsf{x}\mathsf{w}\subseteq\mathsf{y}\mathsf{z}/\mathsf{x}\subseteq\mathsf{y}]\wedge\forall \mathsf{v}\chi(\mathsf{v}/\mathsf{z})\tag{Prop. \ref{der_rules2}\ref{der_rules2_c5}, where $\mathsf{v}$ are fresh}\\
%\vdash&\forall \mathsf{z}\forall \mathsf{v}(\psi[\mathsf{x}\mathsf{w}\subseteq\mathsf{y}\mathsf{z}/\mathsf{x}\subseteq\mathsf{y}]\wedge\chi(\mathsf{v}/\mathsf{z}))\tag{Prop. \ref{der_rules}\ref{der_rules_c2}}\\
\vdash&\forall \mathsf{z}(\psi[\mathsf{x}\mathsf{w}\subseteq\mathsf{y}\mathsf{z}/\mathsf{x}\subseteq\mathsf{y}]\wedge\chi).\tag{Proposition \ref{der_rules2}\ref{der_rules2_c3}}
\end{align*}
The case  $\phi=\chi\wedge\psi(\mathsf{x}\subseteq\mathsf{y})$ is symmetric.

If $\phi=\forall v\psi(\mathsf{x}\subseteq\mathsf{y})$, then we have that
\begin{align*}
\forall \mathsf{z}\forall v\psi(\mathsf{x}\subseteq\mathsf{y})\,&\vdash\forall v\forall \mathsf{z}\psi(\mathsf{x}\subseteq\mathsf{y})\tag{\uqexc}\\
&\vdash\forall v\forall \mathsf{z}\psi[\mathsf{x}\mathsf{w}\subseteq\mathsf{y}\mathsf{z}/\mathsf{x}\subseteq\mathsf{y}]\tag{induction hypothesis}\\
&\vdash\forall \mathsf{z}\forall v\psi[\mathsf{x}\mathsf{w}\subseteq\mathsf{y}\mathsf{z}/\mathsf{x}\subseteq\mathsf{y}].\tag{\uqexc}
\end{align*}

If $\phi=\exists v\psi(\mathsf{x}\subseteq\mathsf{y})$, where $\mathsf{Fv}(\exists v\psi)=\mathsf{u}$ (note that all variables from $\mathsf{y}$ are among $\mathsf{u}$), then we have that
\begin{align*}
\forall \mathsf{z}\exists v\psi(\mathsf{x}\subseteq\mathsf{y})
\vdash&\exists \mathsf{z}\exists v\forall \mathsf{z}_0(\mathsf{u}\mathsf{z}_0\subseteq\mathsf{u}\mathsf{z}\wedge\psi(\mathsf{x}\subseteq\mathsf{y}))\tag{Proposition \ref{inc_atm_prop_c4}, where $\mathsf{z}_0$ are fresh}\\
\vdash&\exists \mathsf{z}\exists v(\forall \mathsf{z}_0(\mathsf{u}\mathsf{z}_0\subseteq\mathsf{u}\mathsf{z})\wedge\forall \mathsf{z}_0\psi(\mathsf{x}\subseteq\mathsf{y}))\tag{Proposition \ref{der_rules2}\ref{der_rules2_c3}}\\
%\vdash&\exists \mathsf{z}\exists v(\forall \mathsf{z}_0(\mathsf{u}\mathsf{z}_0\subseteq\mathsf{u}\mathsf{z})\wedge\forall \mathsf{z}_1\psi(\mathsf{x}\subseteq\mathsf{y}))\tag{Prop. \ref{der_rules2}\ref{der_rules2_c5}, where $\mathsf{z}_1$ are fresh, and note $\mathsf{z}_0\notin \mathsf{Fv}(\psi(\mathsf{x}\subseteq\mathsf{y}))$}\\
\vdash&\exists \mathsf{z}\exists v(\forall \mathsf{z}_0(\mathsf{u}\mathsf{z}_0\subseteq\mathsf{u}\mathsf{z})\wedge\forall \mathsf{z}_0\psi[\mathsf{x}\mathsf{w}\subseteq\mathsf{y}\mathsf{z}_0/\mathsf{x}\subseteq\mathsf{y}])\tag{induction hypothesis}\\
%\vdash&\exists \mathsf{z}\exists v\forall \mathsf{z}_0\forall \mathsf{z}_1(\mathsf{u}\mathsf{z}_0\subseteq\mathsf{u}\mathsf{z}\wedge\psi[\mathsf{x}\mathsf{w}\subseteq\mathsf{y}\mathsf{z}_1/\mathsf{x}\subseteq\mathsf{y}])\tag{Prop. \ref{der_rules}\ref{der_rules_c2}}\\
\vdash&\exists \mathsf{z}\exists v\forall \mathsf{z}_0(\mathsf{u}\mathsf{z}_0\subseteq\mathsf{u}\mathsf{z}\wedge\psi[\mathsf{x}\mathsf{w}\subseteq\mathsf{y}\mathsf{z}_0/\mathsf{x}\subseteq\mathsf{y}])\tag{Proposition \ref{der_rules2}\ref{der_rules2_c3}}\\
\vdash&\exists \mathsf{z}\exists v\forall \mathsf{z}_0(\mathsf{u}\mathsf{z}_0\subseteq\mathsf{u}\mathsf{z}\wedge \mathsf{y}\mathsf{z}_0\subseteq\mathsf{y}\mathsf{z}\wedge\psi[\mathsf{x}\mathsf{w}\subseteq\mathsf{y}\mathsf{z}_0/\mathsf{x}\subseteq\mathsf{y}])\tag{\incctr}\\
\vdash&\exists \mathsf{z}\exists v\forall \mathsf{z}_0(\mathsf{u}\mathsf{z}_0\subseteq\mathsf{u}\mathsf{z}\wedge\psi[\mathsf{x}\mathsf{w}\subseteq\mathsf{y}\mathsf{z}_0\wedge  \mathsf{y}\mathsf{z}_0\subseteq\mathsf{y}\mathsf{z}/\mathsf{x}\subseteq\mathsf{y}])\tag{Proposition \ref{mon_lem_1}, $\because$ $\mathsf{x}\mathsf{w}\subseteq\mathsf{y}\mathsf{z}_0$ is not in the scope of a disjunction or negation}\\
\vdash&\exists \mathsf{z}\exists v\forall \mathsf{z}_0(\mathsf{u}\mathsf{z}_0\subseteq\mathsf{u}\mathsf{z}\wedge\psi[\mathsf{x}\mathsf{w}\subseteq\mathsf{y}\mathsf{z}/\mathsf{x}\subseteq\mathsf{y}])\tag{\inctr, Proposition \ref{mon_lem_2}, $\because$ $\mathsf{x}\subseteq\mathsf{y}$ cannot occur in the scope of a negation}\\
\vdash&\forall \mathsf{z}\exists v\psi[\mathsf{x}\mathsf{w}\subseteq\mathsf{y}\mathsf{z}/\mathsf{x}\subseteq\mathsf{y}].\tag{Proposition \ref{inc_atm_prop_c4}, $\because$ variables in $\mathsf{w}$ are either bound in $\psi[\mathsf{x}\mathsf{w}\subseteq\mathsf{y}\mathsf{z}/\mathsf{x}\subseteq\mathsf{y}]$, or among $\mathsf{u}$}
\end{align*}
\end{proof}

Now we are ready to give the proof of Lemma \ref{approx_der}.
\begin{proof}[Proof of Lemma \ref{approx_der}]
By Lemma \ref{nf_der}, we may assume that $\phi$ is in normal form (\ref{inc_nf}). %, i.e.,
%\[\phi=\exists\mathsf{w}\exists \mathsf{x}\forall y\Big(\bigwedge_{i\in I}\rho^i_{\mathsf{w}}\subseteq \sigma^i_{\mathsf{w}}\,\wedge \bigwedge_{j\in J}\pi^j_{\mathsf{x}}y\subseteq \pi^j_{\mathsf{x}}x_j\wedge\alpha(\mathsf{w},\mathsf{x})\Big).\]
We prove the lemma by proving a stronger claim that $\phi\vdash\Phi_n'$ holds for every $n<\omega$, where  %$\Phi_n':=$% $\Phi_0':=\exists \mathsf{x}_0\forall y_1(\alpha(\mathsf{x}_0)\wedge \mu_0)$ and for $n\geq1$,
\[\Phi_n':= \exists \mathsf{w}_0\exists \mathsf{x}_0 \forall y_0\Big(\alpha_0\wedge\mu_0\wedge  \exists \mathsf{w}^{1}\mathsf{x}^1 \forall y_1\big(\lambda_1\wedge\mu_1\wedge\dots\wedge\exists \mathsf{w}^{n}\mathsf{x}^n \forall y_n(\lambda_n\wedge\mu_n)\underbrace{)\dots\big)\big)\Big)}_{n+1},
\]
where each $\lambda_n=\alpha_n\wedge\gamma_n\wedge\delta_n$,
\[\mu_0=\bigwedge_{i\in I}\rho^i_{\mathsf{w_0}}\subseteq \sigma^i_{\mathsf{w_0}}\,\wedge \bigwedge_{j\in J}\pi^j_{\mathsf{x}_0}y_0\subseteq \tau^j_{\mathsf{x}_0}~\text{ and }~\mu_n=\bigwedge_{\xi\in E_n\cup U_n}\mathsf{w}_\xi\mathsf{x}_\xi\subseteq\mathsf{w}_0\mathsf{x}_0.\]
%\[\eta_0=\bigwedge_{i=1}^k\rho^i_{\mathsf{x}_0y_1}\subseteq \sigma^i_{\mathsf{x}_0}~~\text{ and }~~\eta_n=\bigwedge_{x_\xi y_\eta\in A_{n}\setminus A_{n-1}}\bigwedge_{i=1}^k\rho^i_{\mathsf{x}^{n,i}_{\xi\eta} y_\eta}\subseteq \sigma^i_{\mathsf{x}^{n,i}_{\xi\eta}}\text{ for }n\geq1.\]

If $n=0$, then  $\Phi_0':=\exists\mathsf{w}_0\mathsf{x}_0\forall y_0(\alpha(\mathsf{w}_0,\mathsf{x}_0)\wedge \mu_0)$, and $\phi\vdash\Phi_0'$ can be derived  by simply renaming the variables. Now, assuming $\phi\vdash\Phi'_n$, we show that $\phi\vdash\Phi'_{n+1}$ by deriving $\Phi'_n\vdash\Phi'_{n+1}$. 
%Noting that 
%\[A_{n+1}=A_n\cup\{\mathsf{x}^{n,i}_{\xi\eta}\mid \xi\eta\in A_n\}~\text{ and }~\eta_{n+1}=\eta_n\wedge \bigwedge_{\xi\eta\in A_{n+1}}\bigwedge_{i=1}^k\rho^i_{\mathsf{x}^{n,i}_{\xi\eta} y_\eta}\subseteq \sigma^i_{\mathsf{x}^{n,i}_{\xi\eta}},\]

First, for each $\xi\in E_n$ and each $i\in I$, by Proposition \ref{inc_atm_prop_c0} we derive that
\[\rho^i_{\mathsf{w_0}}\subseteq \sigma^i_{\mathsf{w_0}},\mathsf{w}_{\xi}\mathsf{x}_\xi\subseteq \mathsf{w}_0\mathsf{x}_0\vdash\exists \mathsf{w}\mathsf{x}(\mathsf{w}\mathsf{x}\subseteq \mathsf{w}_0\mathsf{x}_0\wedge \rho^i_{\mathsf{w}_\xi}=\sigma^i_{\mathsf{w}}),\]
which, by \incc,  yields
\begin{equation}\label{approx_der_eq1}
\alpha(\mathsf{w}_0,\mathsf{x}_0),\rho^i_{\mathsf{w_0}}\subseteq \sigma^i_{\mathsf{w_0}},\mathsf{w}_{\xi}\mathsf{x}_\xi\subseteq \mathsf{w}_0\mathsf{x}_0\vdash \exists \mathsf{w}\mathsf{x}(\mathsf{w}\mathsf{x}\subseteq \mathsf{w}_0\mathsf{x}_0\wedge \rho^i_{\mathsf{w}_\xi}=\sigma^i_{\mathsf{w}}\wedge\alpha(\mathsf{w},\mathsf{x})).
\end{equation}
%\begin{equation}\label{approx_der_eq1}
%\begin{split}
%&\alpha(\mathsf{w}_0,\mathsf{x}_0),\rho^i_{\mathsf{w_0}}\subseteq \sigma^i_{\mathsf{w_0}},\mathsf{w}_{\xi}\mathsf{x}_\xi\subseteq \mathsf{w}_0\mathsf{x}_0\\
%\vdash&\exists \mathsf{w}\mathsf{x}(\mathsf{w}\mathsf{x}\subseteq \mathsf{w}_0\mathsf{x}_0\wedge \rho^i_{\mathsf{w}_\xi}=\sigma^i_{\mathsf{w}})\quad\quad\quad\quad\quad\quad\quad\quad\text{(Prop. \ref{inc_atm_prop}\ref{inc_atm_prop_c0})}\\
%\vdash&\exists \mathsf{w}\mathsf{x}(\mathsf{w}\mathsf{x}\subseteq \mathsf{w}_0\mathsf{x}_0\wedge \rho^i_{\mathsf{w}_\xi}=\sigma^i_{\mathsf{w}}\wedge\alpha(\mathsf{w},\mathsf{x})).\quad\quad\quad\quad\quad\quad{(\incc)}
%\end{split}
%\end{equation}
Similarly, for each $\xi\eta\in A_{n+1}$ and  $ j\in J$, we derive also by Proposition \ref{inc_atm_prop_c0} and \incc that
\begin{equation}\label{approx_der_eq2}
%\alpha(\mathsf{w}_0,\mathsf{x}_0),\pi^j_{\mathsf{x}_0}y_0\subseteq \tau^j_{\mathsf{x}_0},\mathsf{w}_{\xi}\mathsf{x}_\xi y_\eta\subseteq \mathsf{w}_0\mathsf{x}_0y_0\vdash\exists \mathsf{w}\mathsf{x}(\mathsf{w}\mathsf{x}\subseteq \mathsf{w}_0\mathsf{x}_0\wedge \pi^j_{\mathsf{x}_\xi}y_\eta= \tau^j_{\mathsf{x}}\wedge\alpha(\mathsf{w},\mathsf{x})).
\alpha(\mathsf{w}_0,\mathsf{x}_0),\pi^j_{\mathsf{x}_0}y_0\subseteq \tau^j_{\mathsf{x}_0},\mathsf{w}_{\xi}\mathsf{x}_\xi y_\eta\subseteq \mathsf{w}_0\mathsf{x}_0y_0\vdash\!\exists \mathsf{w}\mathsf{x}(\mathsf{w}\mathsf{x}\subseteq \mathsf{w}_0\mathsf{x}_0\wedge \pi^j_{\mathsf{x}_\xi}y_\eta= \tau^j_{\mathsf{x}}\wedge\alpha(\mathsf{w},\mathsf{x})).
%&\alpha(\mathsf{w}_0,\mathsf{x}_0),\pi^j_{\mathsf{x}_0}y_0\subseteq \tau^j_{\mathsf{x}_0},\mathsf{w}_{\xi}\mathsf{x}_\xi y_\eta\subseteq \mathsf{w}_0\mathsf{x}_0y_0\\
%\vdash&\exists \mathsf{w}\mathsf{x}(\mathsf{w}\mathsf{x}\subseteq \mathsf{w}_0\mathsf{x}_0\wedge \pi^j_{\mathsf{x}_\xi}y_\eta= \tau^j_{\mathsf{x}})\quad\quad\quad\quad\quad\quad\quad\quad\text{(Prop. \ref{inc_atm_prop}\ref{inc_atm_prop_c0})}\\
%\vdash&\exists \mathsf{w}\mathsf{x}(\mathsf{w}\mathsf{x}\subseteq \mathsf{w}_0\mathsf{x}_0\wedge \pi^j_{\mathsf{x}_\xi}y_\eta= \tau^j_{\mathsf{x}}\wedge\alpha(\mathsf{w},\mathsf{x})).\quad\quad\quad\quad\quad\quad{(\incc)}
\end{equation}
Next, we derive that
\begin{align*}	
\Phi_n'
\vdash&\exists \mathsf{w}_0\exists \mathsf{x}_0 \forall y_0\Big(\alpha_0\wedge\mu_0\wedge  \exists \mathsf{w}^{1}\mathsf{x}^{1}  \forall y_1\big(\dots\wedge\exists \mathsf{w}^{n}\mathsf{x}^{n}  \forall y_n\big(\lambda_n\wedge\mu_n\\
&\quad\quad\wedge\alpha(\mathsf{w}_0,\mathsf{x}_0)\wedge\big(\bigwedge_{i\in I}\rho^i_{\mathsf{w_0}}\subseteq \sigma^i_{\mathsf{w_0}}\big)\wedge\big(\bigwedge_{\xi\in E_n\cup U_n}\mathsf{w}_{\xi}\mathsf{x}_\xi\subseteq \mathsf{w}_0\mathsf{x}_0\big)\\
&\quad\quad\quad\quad\wedge\big(\bigwedge_{j\in J}\pi^j_{\mathsf{x}_0}y_0\subseteq \tau^j_{\mathsf{x}_0}\big)\wedge\bigwedge_{\xi\eta\in A_{n+1}}\mathsf{w}_{\xi}\mathsf{x}_\xi\subseteq \mathsf{w}_0\mathsf{x}_0\big)\dots\big)\Big)\tag{Proposition \ref{der_rules}\ref{der_rules_c2}\ref{der_rules_c1}}\\	
\vdash&\exists \mathsf{w}_0\exists \mathsf{x}_0 \forall y_0\Big(\alpha_0\wedge\mu_0\wedge   \exists \mathsf{w}^{1}\mathsf{x}^{1}  \forall y_1\big(\dots\wedge\exists \mathsf{w}^{n}\mathsf{x}^{n}  \forall y_n\big(\lambda_n\wedge\mu_n\\
&\quad\quad\wedge\alpha(\mathsf{w}_0,\mathsf{x}_0)\wedge\big(\bigwedge_{i\in I}\rho^i_{\mathsf{w_0}}\subseteq \sigma^i_{\mathsf{w_0}}\big)\wedge\big(\bigwedge_{\xi\in E_n\cup U_n}\mathsf{w}_{\xi}\mathsf{x}_\xi\subseteq \mathsf{w}_0\mathsf{x}_0\big)\\
&\quad\quad\quad\quad\wedge\big(\bigwedge_{j\in J}\pi^j_{\mathsf{x}_0}y_0\subseteq \tau^j_{\mathsf{x}_0}\big)\wedge\bigwedge_{\xi\eta\in A_{n+1}}\mathsf{w}_{\xi}\mathsf{x}_\xi y_\eta\subseteq \mathsf{w}_0\mathsf{x}_0y_0\big)\dots\Big)\\
&\tag{Proposition \ref{inc_atm_prop_c7_prop} applied to the subformula $\forall y_0(\alpha_0\wedge\dots)$ and each $\mathsf{w}_{\xi}\mathsf{x}_\xi\subseteq \mathsf{w}_0\mathsf{x}_0$}\\
\vdash&\exists \mathsf{w}_0\exists \mathsf{x}_0 \forall y_0\Big(\alpha_0\wedge\mu_0\wedge  \dots\wedge\exists \mathsf{w}^{n}\mathsf{x}^{n}  \forall y_n\big(\lambda_n\wedge\mu_n\wedge\\
&\quad\bigwedge_{\xi\in E_n\cup U_n, i\in I}\exists \mathsf{w}_{\xi,i}\mathsf{x}_{\xi,i}(\alpha(\mathsf{w}_{\xi,i},\mathsf{x}_{\xi,i})\wedge \rho^i_{\mathsf{w}_{\xi}}=\sigma^i_{\mathsf{w}_{\xi,i}}\wedge\mathsf{w}_{\xi,i}\mathsf{x}_{\xi,i}\subseteq \mathsf{w}_0\mathsf{x}_0)\\
&\quad\quad\quad\wedge\bigwedge_{\xi\eta\in A_{n+1},j\in J}\exists \mathsf{w}_{\xi\eta,j}\mathsf{x}_{\xi\eta,j}(\alpha(\mathsf{w}_{\xi\eta,j},\mathsf{x}_{\xi\eta,j})\wedge\pi^j_{\mathsf{x}_\xi}y_\eta= \tau^j_{\mathsf{x}_{\xi\eta,j}}\wedge\mathsf{w}_{\xi\eta,j}\mathsf{x}_{\xi\eta,j}\subseteq\mathsf{w}_0\mathsf{x}_0)\big)\dots\Big)\tag{by (\ref{approx_der_eq1}) \& (\ref{approx_der_eq2})}\\
\vdash&\exists \mathsf{w}_0\exists \mathsf{x}_0 \forall y_0\big(\alpha_0\wedge\mu_0\wedge  \dots\wedge\exists \mathsf{w}^n\mathsf{x}^{n} \forall y_n\big(\lambda_n\wedge\mu_n\\%\exists\mathsf{v}^{n+1}(\lambda_{n+1}^\exists\wedge\mu_{n+1}^\exists)\wedge\exists\mathsf{u}^{n+1}(\lambda_{n+1}^\forall\wedge\mu_{n+1}^\forall)\big)\dots\big)\\
&\quad\quad\quad\wedge\exists \langle \mathsf{w}_\xi\mathsf{x}_\xi\mid \xi\in E_{n+1}\rangle \big(\gamma_{n+1}\wedge \bigwedge_{\xi\in E_{n+1}}(\alpha(\mathsf{w}_\xi,\mathsf{x}_\xi)\wedge \mathsf{w}_\xi\mathsf{x}_\xi\subseteq \mathsf{w}_0\mathsf{x}_0)\big)\\
&\quad\quad\quad\quad\wedge\exists \langle \mathsf{w}_\xi\mathsf{x}_\xi\mid \xi\in U_{n+1}\rangle \big(\delta_{n+1}\wedge \bigwedge_{\xi\in U_{n+1}}(\alpha(\mathsf{w}_\xi,\mathsf{x}_\xi)\wedge \mathsf{w}_\xi\mathsf{x}_\xi\subseteq \mathsf{w}_0\mathsf{x}_0)\big)\big)\dots\Big)\\
\vdash&\exists \mathsf{w}_0\exists \mathsf{x}_0 \forall y_0\big(\alpha_0\wedge\mu_0\wedge  \dots\wedge\exists \mathsf{w}^n\mathsf{x}^{n} \forall y_n\big(\lambda_n\wedge\mu_n\wedge\exists\mathsf{w}^{n+1}\mathsf{x}^{n+1}(\lambda_{n+1}\wedge\mu_{n+1})\big)\!\dots\!\big)\\
\vdash&\exists \mathsf{w}_0\exists \mathsf{x}_0 \forall y_0\big(\alpha_0\wedge\mu_0\wedge  \dots\wedge\exists\mathsf{w}^{n+1}\mathsf{x}^{n+1}\forall y_{n+1}(\lambda_{n+1}\wedge\mu_{n+1})\dots\big)\text{, i.e., }\Phi_{n+1}'.\tag{\uqi, as $y_{n+1}$ does not occur in $\lambda_{n+1}\wedge\mu_{n+1}$}
\end{align*}
This finishes the proof.
\end{proof}

Finally, we are in a position to prove the completeness theorem of our system.

%\begin{theorem}[Completeness]
%Let $\Gamma$ be a set of \Inc-formulas, and $\alpha$ a first-order formula. Then 
%\[\Gamma\models\alpha\iff\Gamma\vdash_\Inc\alpha.\]
%\end{theorem}
%\begin{proof}
%Suppose $\Gamma\models\alpha(\mathsf{x})$. Since \Inc is compact, without loss of generality we may assume that $\Gamma$ is a finite set. Then $\Gamma,\cn\alpha\models\bot$, which implies $\exists \mathsf{z}(\bigwedge\Gamma\wedge \cn\alpha)\models\bot$, i.e., $\exists \mathsf{z}(\bigwedge\Gamma\wedge \exists \mathsf{y}(\mathsf{y}\subseteq\mathsf{x}\wedge\neg\alpha(\mathsf{y})))\models\bot$, where $\mathsf{z}$ lists all free variables in $\Gamma$ and $\cn\alpha$. Since $\exists \mathsf{z}(\bigwedge\Gamma\wedge \exists \mathsf{y}(\mathsf{y}\subseteq\mathsf{x}\wedge\neg\alpha(\mathsf{y})))$ is an \Inc-sentence and $\bot$ is first-order, by Theorem \ref{cmp_sent} we have $\exists \mathsf{z}(\bigwedge\Gamma\wedge \exists \mathsf{y}(\mathsf{y}\subseteq\mathsf{x}\wedge\neg\alpha(\mathsf{y})))\vdash_\Inc\bot$, from which we derive by \existsi that $\Gamma,\exists \mathsf{y}(\mathsf{y}\subseteq\mathsf{x}\wedge\neg\alpha(\mathsf{y}))\vdash_\Inc\bot$. Finally we conclude $\Gamma\vdash_\Inc\alpha$ by applying Proposition \ref{neg_trans}.
%\end{proof}

\begin{theorem}[Completeness]\label{com_thm}
Let $\Gamma$ be a set of \Inc-formulas, and $\alpha$ a first-order formula. Then 
\[\Gamma\models\alpha\iff\Gamma\vdash_\Inc\alpha.\]
\end{theorem}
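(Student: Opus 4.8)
The backward implication $\Gamma\vdash\alpha\Rightarrow\Gamma\models\alpha$ is exactly the Soundness Theorem already established, so the plan concerns only completeness, $\Gamma\models\alpha\Rightarrow\Gamma\vdash\alpha$. The whole argument is routed through the weak classical negation. By Fact \ref{cnalpha_df} the formula $\cn\alpha$ is (equivalent to) an \Inc-formula, and reading off the semantics of $\cn$ one checks that $\Gamma\models\alpha$ holds if and only if $\Gamma\cup\{\cn\alpha\}$ is satisfied by no nonempty team (an assignment witnessing $\Gamma\not\models\alpha$ gives a nonempty team for $\Gamma\cup\{\cn\alpha\}$, and conversely). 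Since Lemma \ref{neg_trans} makes the $\mathsf{RAA}$ rule for $\cn$ derivable, it suffices to produce a derivation $\Gamma,\cn\alpha\vdash\bot$. Thus the theorem reduces to a single model-existence statement about \Inc: a set of \Inc-formulas that is refutable must be satisfied by some nonempty team.

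Next I would cut the problem down to one \Inc-\emph{sentence}. First, using compactness of \Inc (which extends from the sentence case recalled in Section \ref{sec:pre} to formulas and teams by coding a team with a fresh relation symbol and translating into the compact logic \textsf{ESO}), extract a finite $\Gamma_0\subseteq\Gamma$ with $\Gamma_0\models\alpha$. Let $\mathsf{z}$ list all free variables occurring in $\Gamma_0\cup\{\alpha\}$ and put $\psi:=\exists\mathsf{z}(\bigwedge\Gamma_0\wedge\cn\alpha)$, an \Inc-sentence. Because satisfaction depends only on the restriction of a team to $\mathsf{z}$ (Locality), and because applying the prefix $\exists\mathsf{z}$ to the singleton team $\{\emptyset\}$ yields precisely the arbitrary nonempty teams over $\mathsf{z}$, one has $M\models\psi$ iff some nonempty team satisfies $\Gamma_0\cup\{\cn\alpha\}$. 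Hence $\Gamma_0\models\alpha$ forces $\psi$ to have no model, i.e. $\psi\models\bot$. Granting the sentence-level model-existence fact $\psi\models\bot\Rightarrow\psi\vdash\bot$, the argument closes: from the premises $\Gamma_0,\cn\alpha$ one derives $\bigwedge\Gamma_0\wedge\cn\alpha$ by \conji and then $\psi$ by \existsi, so $\Gamma_0,\cn\alpha\vdash\psi\vdash\bot$; Lemma \ref{neg_trans} then gives $\Gamma_0\vdash\alpha$, hence $\Gamma\vdash\alpha$.

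It remains to prove the sentence-level model-existence fact, which is where the technique of \cite{Axiom_fo_d_KV} enters; I would prove the contrapositive: if an \Inc-sentence $\psi\not\vdash\bot$ then $\psi$ has a model. By Lemma \ref{nf_der} I may replace $\psi$ by its provably implied, semantically equivalent normal form (\ref{inc_nf}) and form the finite first-order approximations $\Phi_n$ of its game expression. By Lemma \ref{approx_der} we have $\psi\vdash\Phi_n$ for every $n<\omega$; since our system is a conservative extension of first-order logic, consistency of $\psi$ forces the first-order theory $\{\Phi_n\mid n<\omega\}$ to be consistent, for a first-order refutation of finitely many $\Phi_n$ would, together with $\psi\vdash\Phi_n$, yield $\psi\vdash\bot$. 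Taking a countable recursively saturated model $M$ of $\{\Phi_n\mid n<\omega\}$ (every consistent first-order theory has one, via Löwenheim–Skolem followed by a recursively saturated elementary extension), Theorem \ref{satu_md_approx_equi} upgrades $M\models\Phi_n$ for all $n$ to $M\models\psi$, producing the desired model.

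The main obstacle is this last, sentence-level step: passing from mere first-order consistency of the approximation theory $\{\Phi_n\}$ to an actual model of the infinitary game expression, which rests essentially on the existence of a countable recursively saturated model and on the nontrivial equivalence of Theorem \ref{satu_md_approx_equi}. A second, more bookkeeping-level delicacy is the reduction from open formulas to the single sentence $\psi$: justifying team-level compactness of \Inc and the exact correspondence between nonempty-team satisfiability of $\Gamma_0\cup\{\cn\alpha\}$ and ordinary satisfiability of $\exists\mathsf{z}(\bigwedge\Gamma_0\wedge\cn\alpha)$, both of which lean on Locality and on the \textsf{ESO}-translation underlying the compactness of \Inc.
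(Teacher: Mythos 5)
Your proposal is correct and takes essentially the same route as the paper's proof: compactness to reduce to a finite $\Gamma$, the sentence $\exists \mathsf{z}(\bigwedge\Gamma\wedge\cn\alpha)$, Lemma \ref{approx_der} plus conservativity over first-order logic to obtain first-order consistency of the approximations, a countable recursively saturated (or finite) model via Barwise--Schlipf, Theorem \ref{satu_md_approx_equi} to recover a model of the sentence, and Lemma \ref{neg_trans} to close the derivation --- the only differences being presentational (you contrapose where the paper argues by contradiction, and you spell out the compactness and locality bookkeeping the paper leaves implicit). One wording slip: your ``model-existence statement'' should say that a set which is \emph{not} refutable (i.e., $\Gamma,\cn\alpha\nvdash\bot$) must be satisfied by some nonempty team, as your own contrapositive formulation later makes clear.
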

\begin{proof}
The direction ``$\Longleftarrow$'' follows from the soundness theorem. For the direction ``$\Longrightarrow$'', since \Inc is compact,  we may without loss of generality  assume that $\Gamma$ is finite. Suppose now $\Gamma\models\alpha$ and $\Gamma\nvdash_\Inc\alpha$.  Claim that $\exists \mathsf{z}(\bigwedge\Gamma\wedge\cn\alpha)\nvdash_{\Inc}\bot$, where %$\cn\alpha(\mathsf{x})=\exists \mathsf{y}(\mathsf{y}\subseteq\mathsf{x}\wedge\neg\alpha(\mathsf{y}))$, and
 $\mathsf{z}$ lists all free variables in $\Gamma$ and $\cn\alpha$. Indeed, if $\exists \mathsf{z}(\bigwedge\Gamma\wedge\cn\alpha)\vdash_\Inc\bot$, then we derive $\Gamma,\cn\alpha\vdash_\Inc\bot$ by \existsi, and further $\Gamma\vdash_\Inc\alpha$ by Lemma \ref{neg_trans}; a contradiction.

Now, let $\Delta=\{\Phi_n\mid \phi=\exists \mathsf{z}(\bigwedge\Gamma\wedge\cn\alpha)\text{ and }n<\omega\}$. By Lemma \ref{approx_der}, we must have that $\Delta\nvdash_{\Inc}\bot$. It follows that $\Delta\nvdash_{\FO}\bot$, since $\Delta\cup\{\bot\}$ is a set of first-order formulas, and the deduction system of \Inc has the same rules as that of first-order logic when restricted to first-order formulas. By the completeness theorem of first-order logic, we know that the set $\Delta$ of approximations of $\phi$ has a model $M$. By \cite{BarwiseSchlipf76}, every infinite model is elementary equivalent to a recursively saturated countable model. Thus,  we may assume that $M$ is a  recursively saturated  countable or finite model. By Theorem \ref{satu_md_approx_equi}, $M$ is also a model of $\exists \mathsf{z}(\bigwedge\Gamma\wedge\cn\alpha)$,  thereby $M\models_{\{\emptyset\}(\mathsf{F}/\mathsf{z})}\Gamma$ and $M\not\models_{\{\emptyset\}(\mathsf{F}/\mathsf{z})}\alpha$ for some suitable sequence $\mathsf{F}$ of functions for $\exists \mathsf{z}$. Hence $\Gamma\not\models\alpha$.
\end{proof}

\section{Applications}

In this final section of the paper, we illustrate the power of our system of \Inc by discussing some applications.

Recall from Proposition \ref{well_founded} that the sentence $\exists x\exists y(y\subseteq x\wedge y<x)$ defines the fact that $<$ is not well-founded. By the completeness theorem (Theorem \ref{com_thm}) we proved in the previous section, all first-order consequences of the non-well-foundedness of $<$ are derivable in our system. For instance, the property that there is a $<$-chain of length $n$ for any natural number $n$, and the property that this $<$-chain of length $n$ descends from the greatest element (if exists).
We now give explicit derivations of these properties in the example below.

\begin{example}
Write $ x_1<x_2<\dots<x_n$ for $\bigwedge_{i=1}^{n-1} x_i<x_{i+1}$.
For any  $n\in\mathbb{N}$,
\begin{enumerate}[label=(\roman*)]
\item $\exists x\exists y(y\subseteq x\wedge y<x)\vdash \exists x_1\dots\exists x_n(x_1<x_2<\dots<x_n)$,
\item $\exists x\exists y(y\subseteq x\wedge y<x),\forall y(y<x_0\vee y=x_0)\vdash \exists x_1\dots\exists x_n(x_1<\dots<x_n<x_0)$.
%\item $\exists x\exists y(y\subseteq x\wedge y<x),\forall x\forall y\forall z((x<y\wedge y<z)\to x<z)\vdash \exists x_0(\exists y(y<x_0)\wedge \forall y(y<x_0\to \exists z(z<y)))$
%\item $\exists x\exists y(y\subseteq x\wedge y<x),\forall x\forall y\forall z((x<y\wedge y<z)\to x<z),\forall y(y<x_0\vee y=x_0)\vdash \exists y(y<x_0)\wedge \forall y(y<x_0\to \exists z(z<y))$
%\item $\exists x\exists y(y\subseteq x\wedge xEy), \forall x_0\dots \forall x_n\bigvee_{0\leq i<j\leq n}x_i=x_j$
\end{enumerate}
\end{example}
\begin{proof}
(i) We only give an example of the proof for $n=3$.
\begin{align*}
\exists x\exists y(y\subseteq x\wedge y<x)
\vdash &\exists x\exists y\exists z(yz\subseteq xy\wedge y<x)\tag{\incwe}\\
\vdash &\exists x\exists y\exists z(yz\subseteq xy\wedge z<y\wedge y<x)\tag{\incc}\\
%\vdash &\exists x\exists y\exists z( z<y\wedge y<x)\tag{\conje}\\
\vdash &\exists x_1\exists x_2\exists x_3( x_1<x_2\wedge x_2<x_3)\tag{\conje and renaming bound variables}
\end{align*}

(ii) In view of item (i), it suffices to show $\exists x_1\dots\exists x_n(x_1< \dots<x_n),\forall y(y<x_0\vee y=x_0)\vdash \exists x_1\dots\exists x_n(x_1<\dots<x_n<x_0)$. But this is derivable in the system of first-order logic, and the same proof can also be performed in the system of \Inc.
\end{proof}

In Proposition \ref{inc_atm_prop} in section 5 we have derived some interesting clauses in our system of \Inc.
It is interesting to note that the formulas on the right side of the turnstile ($\vdash$) in items \ref{inc_atm_prop_c1}\ref{inc_atm_rep} of the proposition are not first-order formulas. While our completeness theorem (Theorem \ref{com_thm}) does not apply to these cases, these clauses are indeed derivable. We now give some more examples in which our system can be successfully applied to derive non-first-order consequences in \Inc.

%It is interesting to note that the completeness theorem for \Inc (Theorem \ref{com_thm}) we proved in the previous section does not actually apply to the clauses in the above example (except one), as all formulas  on the right side of $\vdash$ in the above example expect one are not first-order formulas.

Consider the so-called {\em anonymity atoms}, introduced  in \cite{Pietro_thesis} and studied recently by V\"{a}\"{a}n\"{a}nen \cite{Vaananen_anonymity19} motivated by concerns in data safety.  These atoms are strings of the form  $x_1\dots x_n\Upsilon y_1\dots y_m$ with the team semantics:
\begin{itemize}
\item $M\models_X\mathsf{x}\Upsilon\mathsf{y}$ \ \  iff \ \  for all $s\in X$, there exists $s'\in X$ such that $s(\mathsf{x})=s'(\mathsf{x})$ and $s(\mathsf{y})\neq s'(\mathsf{y})$.
\end{itemize}
Note that the anonymity atoms corresponds exactly to {\em afunctional dependencies} studied in database theory (see e.g., \cite{BraParedaens83,DeBraParedaens82}). It was proved in \cite{Pietro_thesis} that first-order logic extended with anonymity atoms is expressively equivalent to inclusion logic, and in particular, 
\[\mathsf{x}\Upsilon\mathsf{y}\equiv \exists \mathsf{v}(\mathsf{xv}\subseteq\mathsf{xy}\wedge  \mathsf{v}\neq\mathsf{y}),\] 
where $\mathsf{v}\neq\mathsf{y}$ is short for $\bigvee_i v_i\neq y_i$. We will then use $\mathsf{x}\Upsilon\mathsf{y}$ as a shorthand for the above equivalent \Inc-formula. Write $\Upsilon\mathsf{x}$ for $\langle\,\rangle\Upsilon\mathsf{x}$, and  stipulate  $\mathsf{x}\Upsilon=\mathsf{x}\Upsilon\langle\,\rangle:=\bot$.
 %Stipulate that $\mathsf{x}\Upsilon\langle\,\rangle:=\bot$ and write $\Upsilon\mathsf{x}$ for $\langle\,\rangle\Upsilon\mathsf{x}$.
  The implication problem of anonymity atoms is shown in \cite{Vaananen_anonymity19} to be completely axiomatized by the rules listed in the next example (read the clauses in the example as rules).
%$\anm(\mathsf{x})\equiv \exists \mathsf{v}(\mathsf{v}\subseteq \mathsf{x}\wedge \mathsf{v}\neq \mathsf{x})$
%It was proved in \cite{} that the clauses in the next example (read as rules) completely axiomatized the implication problem of anonymity atoms. 
We now illustrate that in our system of \Inc  all these rules are derivable.
\begin{example}
\begin{enumerate}[label=(\roman*)]
\item $\mathsf{xyz}\Upsilon\mathsf{uvw}\vdash \mathsf{yxz}\Upsilon\mathsf{uvw}\wedge \mathsf{xyz}\Upsilon\mathsf{vuw}$ (permutation).
\item $\mathsf{xy}\Upsilon\mathsf{z}\vdash\mathsf{x}\Upsilon\mathsf{zu}$ (monotonicity).
\item $\mathsf{xy}\Upsilon\mathsf{zy}\vdash\mathsf{xy}\Upsilon\mathsf{z}$ (weakening).
\item $\mathsf{x}\Upsilon\vdash\bot$.
\end{enumerate}
\end{example}
\begin{proof}
Item (i) follows easily from \incexc, and item (iv) is trivial. We only prove the other two items. For item (ii), note that $\mathsf{xy}\Upsilon\mathsf{z}:=\exists \mathsf{v}(\mathsf{xyv}\subseteq \mathsf{xyz}\wedge \mathsf{v}\neq \mathsf{z})$, and we have that \vspace{-4pt}
\begin{align*}
\exists \mathsf{v}(\mathsf{xyv}\subseteq \mathsf{xyz}\wedge \mathsf{v}\neq \mathsf{z})
\vdash &\exists \mathsf{v}(\mathsf{xv}\subseteq \mathsf{xz}\wedge \mathsf{v}\neq \mathsf{z})\tag{\incctr}\\
\vdash &\exists \mathsf{vw}(\mathsf{xvw}\subseteq \mathsf{xzu}\wedge \mathsf{v}\neq \mathsf{z})\tag{\incwe}\\
\vdash &\exists \mathsf{vw}(\mathsf{xvw}\subseteq \mathsf{xzu}\wedge \mathsf{vw}\neq \mathsf{zu})\tag{\tensori}\\
=: &\,\mathsf{x}\Upsilon\mathsf{zu}.
\end{align*}

For item (iii), note that $\mathsf{xy}\Upsilon\mathsf{zy}:=\exists \mathsf{uv}(\mathsf{xyuv}\subseteq \mathsf{xyzy}\wedge \mathsf{uv}\neq \mathsf{zy})$, and we have
\begin{align*}
\exists \mathsf{uv}(\mathsf{xyuv}\subseteq \mathsf{xyzy}\wedge \mathsf{uv}\neq \mathsf{zy})
\vdash &\exists \mathsf{uv}(\mathsf{xyuv}\subseteq \mathsf{xyzy}\wedge \mathsf{uv}\neq \mathsf{zy}\wedge \mathsf{y}= \mathsf{v})\tag{Proposition \ref{inc_atm_prop}\ref{inc_atm_prop_c2}}\\
\vdash &\exists \mathsf{uv}(\mathsf{xyuv}\subseteq \mathsf{xyzy}\wedge \mathsf{u}\neq \mathsf{z})\tag{\tensore}\\
\vdash &\exists \mathsf{u}(\mathsf{xyu}\subseteq \mathsf{xyz}\wedge \mathsf{u}\neq \mathsf{z})\tag{\incctr}\\
=: &\,\mathsf{xy}\Upsilon\mathsf{z}.
\end{align*}
\end{proof}

The above example indicates that the actual strength of our deduction system of \Inc goes beyond the completeness theorem (Theorem \ref{com_thm})  proved in this paper.  How far can we actually go then? There are obviously barriers, as inclusion logic cannot be effectively axiomatized after all. For instance, in the context of anonymity atoms, the author was not able to derive a simple (sound) implication ``$\Upsilon\mathsf{x}$ and $\mathsf{x}\subseteq \mathsf{y}$ imply $\Upsilon\mathsf{y}$" in the system of \Inc. An easy solution for generating derivations of simple facts like this one would be to extend the current system with new rules. 
%A relevant natural question in this respect is that 
But then how many new rules or which new rules should we add to the current system in order to derive ``sufficient" amount of sound consequences of \Inc?  One such candidate that is worth mentioning is the natural and handy rule $\phi\vee\neg\alpha,\alpha\vee\psi/\phi\vee\psi$ (for $\alpha$ being first-order) that is sound and does not seem to be derivable in our system. Finding other such rules is left for future research.

\begin{acknowledgement}
The author would like to thank Miika Hannula and Jouko V\"{a}\"{a}n\"{a}nen for interesting discussions related to this paper, and Davide Quadrellaro and an anonymous referee for pointing out some mistakes in an earlier version of the paper. 
\end{acknowledgement}

\bibliographystyle{acm}
%\bibliography{../fan}

\end{document}